\newtheorem{theorem}{Theorem}[section]
\newtheorem{proposition}[theorem]{Proposition}
\newtheorem{lemma}[theorem]{Lemma}
\newtheorem{remark}[theorem]{Remark}
\newtheorem{definition}[theorem]{Definition}
\newcommand{\dieter}[1]{{\bf [~Dieter:\ }\emph{#1}\textbf{~]}}
\begin{document}
\title{On rigidity, orientability and cores of random graphs with sliders}
\author{J.~Barr\'e$^1$,\\
M.~Lelarge$^2$,\\ 
D.~Mitsche$^{1}$} 
\footnotetext[1]{Laboratoire J.~A.~Dieudonn\'e, UMR CNRS 7351,
Universit\'e de Nice-Sophia Antipolis, Parc Valrose, 06108 Nice France, email: \{julien.barre, dmitsche\}@unice.fr }
\footnotetext[2]{INRIA-ENS, Paris, France, email: marc.lelarge@inria.fr}
\date{}
\thanks{Julien Barr\'{e} thanks Victor Mizrahi and Alain Olivetti for many discussions.}
\maketitle
\begin{abstract}
  Suppose that you add rigid bars between points in the plane, and
  suppose that a constant fraction $q$ of the points moves freely in the whole plane; the remaining fraction is constrained to move 
  on fixed lines called sliders. When does a giant rigid cluster
  emerge?  Under a genericity condition, the answer only depends on
  the graph formed by the points (vertices) and the bars (edges). We find for the random graph $G \in \mathcal{G}(n,c/n)$ the threshold
  value of $c$ for the appearance of a linear-sized rigid component as
  a function of $q$, generalizing results of~\cite{Moore}. We show
  that this appearance of a giant component undergoes a continuous
  transition for $q \leq 1/2$ and a discontinuous transition for $q >
  1/2$. In our proofs, we introduce a generalized notion of orientability interpolating between 1- and 2-orientability, 
  of cores interpolating between $2$-core and $3$-core, and of extended cores interpolating between $2+1$-core
  and $3+2$-core; we find the precise expressions for the respective thresholds and the sizes of the different cores above the threshold.
   In particular, this proves a
  conjecture of~\cite{Moore} about the size of the $3+2$-core. We also derive some structural properties of rigidity with sliders (matroid and decomposition into components) which can be of independent interest.

\end{abstract}

\section{Introduction}
\label{sec:intro}
Consider a set of points, some of them allowed to move freely in the
Euclidean plane, and some constrained to move on fixed lines, called
sliders. The free points have two degrees of freedom, the points
attached to sliders have only one. Now, add bars between pairs of
these points; a bar fixes the length between the two end-points.  The
points and bars form a \emph{framework}. A framework is said to be
rigid if it cannot be deformed (but can possibly be translated and
rotated on the plane); equivalently, it is rigid if the distance
between any pair of points, connected by a bar or not, is fixed.
Characterizing the rigidity of a framework is very difficult in
general. In the absence of sliders, a celebrated theorem by
Laman~\cite{Laman} ensures that for a generic framework, its rigidity
properties only depend on its underlying graph, where points are
vertices and bars are edges: the geometry does not enter. This theorem
has been generalized to frameworks with sliders in~\cite{Streinu}.  In
the whole article, we will implicitly assume that all frameworks are
generic, so that rigidity has a purely graph theoretical
characterization and we can deal with vertices and edges instead of
points and bars. 
This will be detailed in Section~\ref{sec:defs}.
  
We will call a vertex of \emph{type 1} (resp. \emph{type 2}) if it is
(resp. is not) connected to a slider. Consider now a percolation
setting: take a set of $n$ vertices, a fraction $q$ of which are type 2,
and add edges randomly. The questions are: When does a giant (that is:
including a positive fraction of the vertices) rigid structure emerge?
What is its size?  When edges are sampled independently at random between
pairs of vertices, the resulting graph is an Erd\H{o}s-R\'enyi random
graph $G(n,c/n)$. In this case and for $q=1$ (no slider),
Kasiviswanathan et al. \cite{Moore} showed that the threshold for
a giant rigid component is $c\simeq 3.588$, and that the transition is
discontinuous: as soon as the giant rigid component appears, it already 
includes a positive fraction of all $n$ vertices. This recovers numerical
and heuristic results found earlier in the physics literature
\cite{Moukarzel97,Moukarzel99}, and contrasts with the emergence
of a giant connected component at $c=1$, which is continuous. 

Indeed when $q=0$, we will see that rigidity is closely related to the emergence of the giant connected component.
Our goal is to investigate the case where $q \in [0,1]$. 
We are thus interested in situations interpolating between
standard connectivity percolation and rigidity percolation as studied
in \cite{Moore}.  
We obtain the following results:
\begin{itemize}
\item We compute the threshold for rigidity percolation as a function of $q$
\item We show that the transition is continuous for $q \leq 1/2$ and
  discontinuous for $q>1/2$, thus uncovering what is called a
  "tricritical" point in statistical mechanics, for $q=1/2$
\item On the way, we obtain new results on cores for Erd\H{o}s-R\'enyi
  random graphs and their generalization to two types of
  vertices. We prove in particular a conjecture on the size of the 
  $3+2$-core in~\cite{Moore}
\end{itemize}

Rigidity percolation has physical motivations: it is a model to
understand some properties of network glasses and proteins
\cite{Thorpe1983,Phillips1985,Boolchand1986,Thorpe_proteins}. Thus, problems
related to ours have been investigated by theoretical physicists. We
have already cited investigations on random graphs starting with
\cite{Moukarzel97,Moukarzel99}, with only one type of vertex (type 2,
or more generally type $k$). In \cite{Moukarzel2003}, Moukarzel
heuristically studies a model with two types of vertices: a
fraction of the vertices are pinned to the plane, instead of being
allowed to move in one direction; they could be called ``type 0''
vertices. In this case, the transition disappears when the fraction of
pinned vertices increases: there is no tricritical point, but rather a
critical point. 

In order to compute the threshold for rigidity, we use the same
connection as~\cite{Moore} between orientability and rigidity. We then
use recently introduced and powerful methods to compute the
orientability threshold \cite{Lelarge}. To investigate the continuous or
discontinuous character of the transition, we rely on various
refinements of a method introduced in~\cite{Janson} to investigate the cores
of a random graph.
In Section \ref{sec:defs}, we define our notion of rigidity with sliders and state our main results for Erd\H{o}s-R\'enyi random graphs. In Section \ref{sec:det}, we gather our structural results for rigidity with sliders: matroid and decomposition into components. We then prove our results for random graphs: in Section \ref{sec:orient}, we compute the orientability threshold, in Section \ref{sec:or-rig}, we relate it to rigidity. We then prove our main Theorems in Sections \ref{sec:theo_cont}, \ref{sec:theo_core} and \ref{sec:theo_core+}. Finally a technical but important Lemma is proved in Section \ref{sec:lemma_important}.

\section{Some definitions on rigidity and statements of  results}
\label{sec:defs}

Throughout this paper $\log$ denotes the natural logarithm. Also, throughout the paper $G$ is a graph $(V,E)$ with $|V|=n$ and $|E|=m$. All
our graphs are simple. Vertices are either of type 1 or of type 2, and for
$i \in \{1,2\}$, $n_i$ denotes the number of vertices of type $i$, so that
$n=n_1+n_2$.

Subgraphs are typically denoted by $G'$ with $n_i(G')$ vertices of type $i\in
\{1,2\}$, $n(G')=n_1(G')+n_2(G')$ vertices in total and $m(G')$ edges. When
the context is clear, we use the notations: $n'=n(G')$, $n_i'=n_i(G')$ and
$m'=m(G')$.

\begin{definition} 
  Let $G$ be a graph with $n=n_1+n_2$ vertices and $m$ edges. $G$ is
  \textbf{sparse} if for all subgraph $G'\subseteq G$ on
  $n'=n'_1+n'_2\geq 2$ vertices and $m'$ edges, we have:
\[
m'\leq n'_1+2n'_2+\min(0,n'_1-3) = 2n'-\max(n'_1,3).
\] 
\end{definition}

In terms of physics, a sparse graph represents a structure without
redundant constraint. The special treatment needed for subgraphs with
0, 1 or 2 vertices of type $1$, i.e. when $n'_1< 3$, can then be
understood: a structure which is not connected at all to the
underlying plane (that is $n'_1=0$) cannot be pinned, and always keeps
at least three degrees of freedom, hence the $-3$; a structure with
one slider (that is $n'_1=1$) always keeps at least two degrees of
freedom, hence the $-2$; and similarly for $n'_1=2$. If $n'_1\geq 3$,
the structure can be completely pinned to the underlying plane, and
thus has zero degrees of freedom.

\begin{remark}\label{rem:Streinu1} We follow here Streinu and Theran \cite{Streinu},
    with a simplified terminology to make the present article easier
    to read. The present definition of sparsity corresponds to their
    $(2,0,3)$-graded-sparsity, for a restricted class of graphs (they 
consider also multiple graphs, and more types of vertices). Since
    we are only using two concepts of sparsity (see definition of
    Laman-sparsity below), no confusion should arise. To make
    the connection more explicit, note that our ``type 1 vertices'' correspond to
    vertices ``with one attached loop'' in \cite{Streinu}.
\end{remark}

We recall the standard definition:
\begin{definition}
  $G$ is Laman-sparse if for all subgraph $G'\subseteq G$ with $n'
  \geq 2$, $m'\leq 2n'-3$.
\end{definition}
Laman-sparsity and sparsity are equivalent if there are only vertices of
type $2$, i.e. $n=n_2$. Moreover a sparse graph is always Laman-sparse.
\begin{definition}
$G$ is \textbf{minimally rigid} if either $n=1$, or $G$ is sparse
and
\begin{eqnarray}
m= n_1+2n_2+\min(0,n_1-3).
\end{eqnarray}
\label{def_rigid}
\end{definition}

\begin{lemma}
If $G$ is minimally rigid with $n_1<6$,
then $G$ is connected.
\end{lemma}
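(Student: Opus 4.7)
The plan is to argue by contradiction: assume $G$ is disconnected with connected components $C_1, \dots, C_k$ for some $k \geq 2$, and derive $n_1 \geq 6$. The idea is to sum the sparsity bound over the components and match the result with the exact edge count $m = 2n - \max(n_1, 3)$ imposed by minimal rigidity.

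For each component $C_j$ with $n(C_j) \geq 2$, sparsity of $G$ applied to $C_j$ as a subgraph gives $m(C_j) \leq 2\,n(C_j) - \max(n_1(C_j), 3)$, while each singleton component contributes $m(C_j) = 0$. Let $s$ denote the number of singleton components and $n_1^{s} \leq s$ the number of type-1 vertices among them. Summing over components and using the elementary inequality $\sum_j \max(a_j, b_j) \geq \max\bigl(\sum_j a_j,\, \sum_j b_j\bigr)$ on the non-singleton terms, together with $\sum_{\text{non-sing}} n_1(C_j) = n_1 - n_1^{s} \geq n_1 - s$, yields
\[
m \;\leq\; 2(n - s) - \max\bigl(n_1 - s,\; 3(k-s)\bigr).
\]
Substituting $m = 2n - \max(n_1, 3)$ and rearranging produces the master inequality
\[
\max(n_1, 3) \;\geq\; 2s + \max\bigl(n_1 - s,\; 3(k-s)\bigr).
\]

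A brief case analysis now forces $n_1 \geq 6$. If $s = 0$, the right side is at least $3k \geq 6$, giving $n_1 \geq 6$ directly. If $s \geq 1$ and the inner maximum is $n_1 - s$, the inequality becomes $\max(n_1, 3) \geq n_1 + s$, which forces $n_1 < 3$; combining with the defining condition $n_1 - s \geq 3(k-s)$ and $k \geq s + 1$ (the all-singleton scenario would force $m = 0$, contradicting $m = 2n - \max(n_1, 3) \geq 1$ for $n \geq 2$) yields $n_1 \geq s + 3 \geq 4$, contradicting $n_1 < 3$. If instead the inner maximum is $3(k - s)$, then $\max(n_1, 3) \geq 3k - s$; for $n_1 \leq 3$ this demands $s \geq 3k - 3$, impossible under $s \leq k - 1$ and $k \geq 2$, while for $n_1 > 3$ one gets $n_1 \geq 2k + 1$, which is at least $7$ for $k \geq 3$ and for $k = 2$ leaves only $n_1 = 5$, $s = 1$, contradicting the defining condition $n_1 - s < 3(k - s)$ of this branch.

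The main obstacle is the bookkeeping for singleton components, since sparsity is only formulated for subgraphs with at least two vertices, and singletons must be tracked by hand. Once that is done, the underlying observation is simply that each component consumes at least $3$ units (or $2$ for a singleton) of the \emph{deficit budget} $\max(n_1, 3)$, so $k \geq 2$ components use up at least $5$ units, leaving no room compatible with minimal rigidity unless $n_1 \geq 6$.
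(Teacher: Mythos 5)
Your proof is correct. It rests on the same core idea as the paper's---sum the sparsity bound over the pieces of a disconnection and compare against the exact edge count $m = 2n - \max(n_1,3)$ imposed by minimal rigidity---but the execution differs in two ways. The paper applies the crude Laman bound $m_i \leq 2n_i - 3$ to just the two sides of a single bipartition; you instead decompose into all $k$ connected components, use the full graded bound $m(C_j) \leq 2n(C_j) - \max(n_1(C_j),3)$, and track singleton components separately. The extra bookkeeping buys something real: the paper's inequality $m_i \leq 2n_i-3$ is only licensed by the sparsity definition when a part has at least two vertices, so the paper's one-line computation silently ignores the case of a singleton side, whereas your explicit count $s$ of singletons and the master inequality $\max(n_1,3) \geq 2s + \max\bigl(n_1 - s,\, 3(k-s)\bigr)$ handle it cleanly. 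The price is a longer case analysis (the paper's is three lines), and the closing heuristic about each component spending $3$ (or $2$) units of budget slightly undersells your own bound---your third case actually needs the sharper $\max(n_1-s, 3(k-s))$ interplay, not just the count of components---but the detailed case-by-case verification is sound throughout.
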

\begin{proof}
Consider a partition of the vertices in two parts with $n_a$ and $n_b$
vertices respectively. Let $m_a$ and $m_b$
be the number of edges induced by each part. By the sparsity of $G$,
we have $m_i \leq 2n_i-3$ for $i\in \{a,b\}$.
Hence, we have
\begin{eqnarray*}
m - (m_a+m_b) \geq 2n-\max(3,n_1) - 2n_a-2n_b+6 = 6-\max(3,n_1),
\end{eqnarray*}
so that for $n_1<6$, the two parts are connected.
\end{proof}

\begin{remark}
For $n_1\geq 6$, a minimally rigid graph $G$ does not need to be connected as seen by considering the disjoint union of two cliques of size three with all nodes of type $1$.
\end{remark}
\begin{remark} 

Streinu and Theran (see~\cite{Streinu}) use a slightly different
  definition of rigidity. In our notation,
  for them $G$ is minimally rigid if $G$ is sparse and
  $m=n_1+2n_2$. This definition is not equivalent to ours: using our
    Definition~\ref{def_rigid}, physically, it means that we consider as
  rigid a structure that cannot be deformed (but can possibly be moved
  over the plane as a solid object). Streinu and Theran
  (see~\cite{Streinu}) consider as rigid a structure that cannot
  be deformed, and that is pinned on the plane; in particular,
  rigidity in this sense implies $n_1\geq
  3$. Definition~\ref{def_rigid}, however, coincides with the standard
  definition of rigidity when there are only vertices of type $2$,
  and this will be convenient to compare our results with the results
  of~\cite{Moore}; it will also allow us to use some of their results.
\end{remark}

Recall that a spanning subgraph is one that includes the entire vertex set $V$.
\begin{definition}
  A graph is \textbf{rigid} if it contains a spanning subgraph which
  is minimally rigid. A \textbf{rigid block} in $G$ is defined to be a
  vertex-induced rigid subgraph. A \textbf{rigid component} of
  $G$ is an inclusion-wise maximal block. 
\end{definition}
\begin{remark}
Note that for a sparse graph $G$, a rigid block is always minimally rigid.
\end{remark}
Note that a rigid component does not need to be connected.
By definition, it is clear that rigidity is preserved under addition
of edges and that the size of the largest (in terms of vertices
covered) rigid component of a graph can only increase when edges are
added.

We now describe our probabilistic setting: consider for the following
statements the random graph $G \in \mathcal{G}(n,c/n)$ where each edge
is present independently with probability $c/n$, with $c > 0$. For
such a graph we also write $G(n,c/n)$ below. Each vertex gets type $1$
with probability $1-q$ and type $2$ with probability $q$, where $q \in
[0,1]$.

To state our result, we need some notations. Let $Q(x,y) =
e^{-x}\sum_{j\geq y}\frac{x^j}{j!}$. We define the function $c^*(q)$
as follows: 
\begin{itemize}
\item for $q\leq 1/2$, we set $c^*(q)=\frac{1}{1-q}$;
\item for $q>1/2$, let
$\xi^*=\xi^*(q)$ be the positive solution to:
\begin{eqnarray*}
\xi \frac{(1-q) Q(\xi,1)+qQ(\xi,2)}{(1-q) Q(\xi,2)+ 2qQ(\xi,3)}=2.
\end{eqnarray*}
In this case we set:
\begin{eqnarray*}
c^*(q) = \frac{\xi^*}{(1-q) Q(\xi^*,1)+qQ(\xi^*,2)}.
\end{eqnarray*}
\end{itemize}

It will follow from the proof that the equation for $\xi^*$ has
indeed a unique positive solution and that for $q > 1/2$, $c^*(q) <
\frac{1}{1-q}$.

We can now state our first theorem:
\begin{theorem}\label{theo_threshold}
  Let $G=G(n,c/n)$ with $c > 0$, and let $q \in [0,1]$. Let $R_n(q,c)$ ($R_n^C(q,c)$, resp.)
  be the number of vertices covered by the largest rigid component
  (connected rigid block, resp.) of $G$.
\begin{itemize}
\item For $c>c^*(q)$, there is a
  giant rigid component in $G$, i.e., there exists
  $\alpha=\alpha(q,c)>0$ such that
\[
\mathbb{P}\left(\frac{R_n(q,c)}{n}\geq \alpha\right) \to
1~\mbox{when}~n\to \infty
\]
\item For $c<c^*(q)$, there is no giant
rigid component in $G$; i.e.,
\[
\forall \alpha>0~,~ \mathbb{P}\left(\frac{R_n(q,c)}{n}\geq \alpha\right) \to
0~\mbox{when}~n\to \infty
\]
\end{itemize}
The above results also hold true for $R_n^C(q,c)$.  
  Moreover, for $c > c^*(q)$, a.a.s., there is one unique giant
rigid component (one unique giant connected rigid block, resp.).
\end{theorem}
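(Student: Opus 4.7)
The plan is to reduce the rigidity question to a generalized orientability question, extending the $q=1$ approach of \cite{Moore} to our two-type setting. Because the sparsity inequality $m' \leq n'_1 + 2 n'_2 + \min(0, n'_1-3)$ will turn out to be matroidal (Section \ref{sec:det}), a subgraph is sparse if and only if it admits an orientation in which type-1 vertices have out-degree at most $1$ and type-2 vertices have out-degree at most $2$, together with a mild global constraint reflecting the $\min(0, n'_1-3)$ correction. Minimally rigid subgraphs are exactly the tight ones that saturate this bound, so the emergence of a giant rigid component coincides with the emergence of a linear-sized subgraph of $G(n,c/n)$ that fails the orientability condition.

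To compute the latter threshold (Section \ref{sec:orient}), I would apply the local weak-limit / message-passing framework of \cite{Lelarge} to the Poisson Galton--Watson tree that is the local limit of $G(n,c/n)$. In this framework, each vertex carries a Poisson$(c)$ number of incident edges, and the orientability constraint propagates through a single-variable fixed-point recursion; the Poisson tail functions $Q(\xi,1)$, $Q(\xi,2)$, $Q(\xi,3)$ appearing in the statement arise naturally from the type-1/type-2 acceptance capacities $1$ and $2$. A monotonicity and convexity analysis would yield uniqueness of $\xi^*$ and the stated formula for $c^*(q)$; the split between $q \leq 1/2$ and $q > 1/2$ emerges because in the former regime the dominant obstruction to orientability is the appearance of a dense sub-structure among type-1 vertices (threshold $1/(1-q)$), whereas in the latter regime a denser type-2 core becomes the binding constraint, explaining $c^*(q) < 1/(1-q)$.

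For $c > c^*(q)$, I would use the generalized extended cores developed in Sections \ref{sec:theo_core}--\ref{sec:theo_core+} to exhibit a linear-sized subgraph whose edge count exceeds the orientability bound, forcing a linear-sized rigid block inside it. For $c < c^*(q)$, the same core analysis would show that the only possible tight subgraphs are sublinear, so no giant rigid component (and, a fortiori, no giant connected rigid block) can exist. Uniqueness of the giant component would then follow from the matroid closure property of Section \ref{sec:det}: two disjoint linear-sized rigid components would, in $G(n,c/n)$, have $\Theta(n)$ edges between them, and any such edge joins them into a strictly larger rigid block via the matroidal union, contradicting maximality. The main obstacle is the sharp control near criticality, in particular showing $R_n(q,c)/n \to 0$ strictly below $c^*(q)$ even in the discontinuous regime $q > 1/2$; this requires fine concentration for the size of the generalized core (via the techniques refining those of \cite{Janson} developed in Section \ref{sec:lemma_important}), and the tricritical point at $q = 1/2$, where both regimes meet, is where the analysis is most delicate.
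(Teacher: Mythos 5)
Your first two paragraphs correctly identify the high-level strategy the paper uses: reduce rigidity to a generalized orientability threshold and compute that threshold via the local weak limit framework of \cite{Lelarge}, and this matches Section~\ref{sec:orient} closely. The bridge from orientability to rigidity, however, is not the one you sketch. The cores of Theorems~\ref{theo_core} and~\ref{theo_core+} are used only in the proof of Theorem~\ref{theo_continuous}; for Theorem~\ref{theo_threshold} the paper works with two direct lemmas in Section~\ref{sec:or-rig}. Lemma~\ref{lem:rigid_nonorientable} shows that a linear-sized rigid component at parameter $c$ forces non-$1.5$-orientability at any $c+\eps$, which handles the $c<c^*(q)$ direction by contraposition. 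Lemma~\ref{lem:nonorientable_rigid} converts non-orientability into a giant rigid connected block, and the key input there is Lemma~\ref{lemma_important}, which is not about concentration of the core as you describe but about ruling out \emph{sublinear} subgraphs whose edge count exceeds $n'_1+2n'_2$ when $c<1/(1-q)$; this is precisely what replaces the Theran estimate used in \cite{Moore}, which breaks down when most vertices of a candidate subgraph are of type $1$. Also, $1.5$-orientability corresponds to $m'\le n'_1+2n'_2$ with no $\min(0,n'_1-3)$ correction, so it is strictly weaker than sparsity; the two thresholds coincide only asymptotically and only after the $\eps$-perturbation built into Lemma~\ref{lem:rigid_nonorientable}.

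Your uniqueness argument has a concrete error: one edge between two disjoint rigid components does not make their union rigid. For two disjoint type-$2$ triangles joined by a single edge one has $m=7$ while minimal rigidity of the union would need $2\cdot 6 - 3 = 9$ edges. The paper's Lemma~\ref{lem:addedges} shows that one needs up to three pairwise disjoint connecting edges, and that three always suffice. Even after this fix a static argument does not close the proof: the giant rigid connected blocks could have size as small as $n/\omega_n$ for an arbitrarily slowly growing $\omega_n$, and they are random sets determined by the graph, so one cannot simply count expected crossing edges. The paper instead generates $G(n,c/n)$ by adding edges one at a time, uses Lemma~\ref{lem:unique} to bound the number of giant blocks by $\omega_n(1+o(1))$, shows any two merge within $\omega_n^3\log n$ steps once three suitable disjoint edges arrive, and finally uses the $O(1/\sqrt n)$ anticoncentration of the binomial edge count $m$ to argue that the stopping time is unlikely to fall in the short transient window where more than one giant block coexists. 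None of these steps appear in your proposal and they are precisely where the difficulty lies.
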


Our next theorem states that the transition as $c$ varies and $q$ is
held fixed is continuous for $q \leq
1/2$ and discontinuous for $q > 1/2$. More precisely, we have the
following:
\begin{theorem}\label{theo_continuous}
  $\bullet$ The transition is discontinuous for $q>1/2$: let $q>1/2$;
  there is $\alpha(q)=\alpha>0$ such that for any $c>c^\ast(q)$
\[
\lim_{n\to \infty} \mathbb{P}\left(  \frac{R_n(q,c)}{n}\geq \alpha \right) =1
\]
$\bullet$ The transition is continuous for $q \leq 1/2$: let $q \leq
1/2$; for any $\alpha>0$,
\[
\lim_{c\to \frac{1}{1-q}} \lim_{n\to \infty} \mathbb{P}\left(
  \frac{R_n(q,c)}{n}\geq \alpha \right) = 0
\]
\end{theorem}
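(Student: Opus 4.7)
The plan is to identify the largest rigid component of $G(n,c/n)$ with one of the generalized cores introduced in the paper and then transfer the continuity or discontinuity of the corresponding core size to the size of the rigid component. Via the orientability--rigidity reduction of Section \ref{sec:or-rig}, the threshold $c^\ast(q)$ matches, in each of the two regimes, the birth threshold of a specific generalized core: the $2{+}1$-core for $q\le 1/2$ and the $3{+}2$-core for $q>1/2$.

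For the discontinuous part ($q>1/2$), the constant $\xi^\ast$ was defined precisely so that $c^\ast(q)$ is the emergence threshold of the generalized $3{+}2$-core studied in Section \ref{sec:theo_core+}. The first step is to establish, in the spirit of the classical Pittel--Spencer--Wormald jump for the $k$-core with $k\ge 3$, that at $c=c^\ast(q)$ the fraction of vertices in this core jumps from $0$ to an explicit $\alpha_0(q)>0$, with the core covering at least $\alpha_0(q)\,n$ vertices a.a.s.\ for every $c>c^\ast(q)$. Since the orientability argument of Section \ref{sec:or-rig} certifies that the $3{+}2$-core is contained in a rigid subgraph, taking $\alpha(q)=\alpha_0(q)/2$ yields the desired conclusion.

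For the continuous part ($q\le 1/2$), the threshold $c^\ast(q)=1/(1-q)$ is the emergence threshold of the giant connected component of the subgraph of $G(n,c/n)$ induced by the type-$1$ vertices (whose mean degree is $c(1-q)$); it plays the role of the classical $2$-core threshold. The plan is to sandwich the largest rigid component by the generalized $2{+}1$-core of Section \ref{sec:theo_core}. Two ingredients are required: first, that this $2{+}1$-core covers only $o(n)$ vertices at $c=c^\ast(q)$, with a relative size that is continuous in $c$ at $c^\ast(q)$ (in analogy with the classical $2$-core transition), and second, that every rigid component of linear size must be contained, up to $o(n)$ vertices, in the $2{+}1$-core. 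Combining these two facts gives $\lim_{c\downarrow c^\ast(q)}\lim_n\mathbb{P}(R_n(q,c)/n\ge\alpha)=0$ for every $\alpha>0$.

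The main obstacle I foresee is the containment claim in the continuous case. A linear-sized rigid block saturates the sparsity inequality as $m'=n'_1+2n'_2$ (when $n'_1\ge 3$), and to reduce it to the $2{+}1$-core one must propagate the sparsity constraint $m''\le 2n''-\max(n''_1,3)$ to every sub-block and then perform a first-moment estimate over all admissible triples $(n'_1,n'_2,m')$ corresponding to putative \emph{exceptional} rigid blocks lying outside the core. This delicate enumeration is precisely the content of the technical lemma proved in Section \ref{sec:lemma_important}, which I would invoke to close the argument. The remaining continuity and lower semi-continuity properties of the core sizes then follow in a standard way from the contraction/differential-equation analyses carried out in Sections \ref{sec:theo_core} and \ref{sec:theo_core+}.
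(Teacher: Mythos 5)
Your proposal contains several genuine gaps, both in the overall structure and in the key claims.

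\textbf{The discontinuous case rests on a false premise.} You assert that for $q>1/2$, $c^\ast(q)$ coincides with the birth threshold of the generalized $3{+}2$-core (the $2.5{+}1.5$-core) from Theorem~\ref{theo_core+}. This is not what the paper proves: the core appears at $\tilde c(q)$ defined in (\ref{def:tildec}), and for $q>1/2$ the paper explicitly establishes $\tilde c(q) < c^\ast(q)$. In the window $\tilde c(q)<c<c^\ast(q)$ there is a giant $2.5{+}1.5$-core but, a.a.s., no giant rigid component. Consequently, your proposed deduction that a jump in the core size forces a jump in the rigid component cannot work. You also have the containment reversed: the paper's Lemma~\ref{lemma_core2} shows that a rigid block with nonempty $2.5$-core is contained \emph{in} the $2.5{+}1.5$-core, not that the core is contained in a rigid subgraph; the latter is false, as the existence of the window above demonstrates. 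The actual argument for $q>1/2$ is more direct and avoids the core altogether: since $c^\ast(q)<\tfrac{1}{1-q}$ and the property of having a giant rigid component is monotone, it suffices to consider $c^\ast(q)<c<\tfrac{1}{1-q}$, where Lemma~\ref{lem:nonorientable_rigid} (whose proof relies on Lemma~\ref{lemma_important}) directly produces a connected rigid block of size at least $\alpha n$ for an explicit $\alpha=\alpha(q,c-\tfrac{1}{1-q})>0$.

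\textbf{The continuous case is missing its crucial step.} Your sandwich claim, ``every rigid component of linear size must be contained, up to $o(n)$ vertices, in the $2{+}1$-core,'' is unsubstantiated, and the hedge ``up to $o(n)$'' does not appear in and is not provided by the paper. The correct deterministic statement is Lemma~\ref{lemma_core2}: a rigid block whose $2.5$-core is \emph{nonempty} is contained in the $2.5{+}1.5$-core. The hypothesis of nonemptiness is not automatic (an edge between two type-$1$ vertices, or a triangle of type-$2$ vertices, is a rigid block with empty $2.5$-core), and verifying it for the largest rigid component is the technical heart of the proof: one constructs a cycle of type-$1$ vertices of size $\Theta(n)$ inside the type-$1$ subgraph (via the long-path result of Ajtai--Koml\'os--Szemer\'edi plus sprinkling), argues by cases (depending on $n_1(H)\in\{0,1,2,\ge 3\}$) that this cycle can be merged into the largest rigid component $H$ after an $O(1/\sqrt n)$ perturbation of $c$, and only then concludes that the $2.5$-core of $H$ is nonempty. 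Without this, the containment you invoke does not apply. Finally, you misattribute the role of Lemma~\ref{lemma_important}: it is used for the discontinuous regime and for the orientability--rigidity link, not for the continuous-case containment, which is purely structural once nonemptiness of the $2.5$-core is secured.
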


We now relate rigidity and orientability. We start with the following
definition of \textbf{$1.5$-orientability}.
\begin{definition}
  A graph is 1.5-orientable if there exists an orientation of the
  edges such that type 1 vertices have in-degree at most 1 and type 2
  vertices have in-degree at most 2.
\end{definition}

A standard argument in the context of network flows gives (see
Proposition 3.3 in \cite{sparsity}) 
\begin{proposition}\label{prop:flow}
A graph $G$ is $1.5$-orientable if and only if for every induced
subgraph $G'$ of $G$, $m'\leq n'_1+2n'_2$.
\end{proposition}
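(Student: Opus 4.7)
The plan is to reduce $1.5$-orientability to a bipartite matching/Hall-type condition and then observe that the induced-subgraph inequalities are exactly the hardest cases of that condition. Concretely, I would build an auxiliary bipartite graph $H$ with one side equal to $E$ (the edges of $G$) and, on the other side, a collection of ``capacity slots''---one slot for each type-$1$ vertex and two slots for each type-$2$ vertex. In $H$, join an edge $e\in E$ to every slot attached to an endpoint of $e$. A $1.5$-orientation of $G$ is precisely a matching in $H$ that saturates the $E$-side (each edge is matched to a slot at the vertex that becomes its head). Existence of such a matching is controlled by Hall's theorem, or equivalently by max-flow min-cut on the obvious $s$-$t$ network with capacities $1$ from source to each edge-node, $1$ from edge-node to each endpoint-node, and $1$ or $2$ from a vertex-node to the sink.

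For necessity, given a valid orientation and an induced subgraph $G'=(V',E')$, each edge of $E'$ has both endpoints in $V'$, so it contributes $1$ to the in-degree of some vertex of $V'$. Summing,
\[
m' \;\leq\; \sum_{v\in V'} \mathrm{in\text{-}deg}(v) \;\leq\; n'_1 + 2n'_2,
\]
which is the stated inequality.

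For sufficiency, I would verify Hall's condition for the $E$-side of $H$. Given any $F\subseteq E$, let $V':=V(F)$ be the set of endpoints of edges in $F$, and let $E'$ be the edge set of the subgraph of $G$ induced by $V'$. The number of slots in $H$ adjacent to $F$ equals $n_1(V')+2n_2(V')=n'_1+2n'_2$. Since $F\subseteq E'$, the assumed induced-subgraph inequality gives
\[
|F| \;\leq\; |E'| \;\leq\; n'_1 + 2n'_2,
\]
which is exactly Hall's condition. Thus a saturating matching, hence a valid $1.5$-orientation, exists.

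The main ``obstacle'' is purely conceptual: one must notice that restricting the Hall-type condition from arbitrary edge subsets $F$ to induced subgraphs loses no information, because replacing $F$ by the full edge set $E'$ spanned by $V(F)$ only increases the left-hand side without changing the right-hand side. Once this observation is made, the argument is entirely routine, which is why the paper simply cites it as ``a standard argument in the context of network flows''.
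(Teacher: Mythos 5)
Your proof is correct and is exactly the standard Hall/max-flow argument that the paper invokes by citation (Proposition 3.3 in the reference on sparsity), so it matches the intended route. The key reduction—replacing an arbitrary edge set $F$ by the full induced edge set on $V(F)$, which can only increase $|F|$ while leaving the slot count unchanged—is indeed the one non-trivial observation, and you state it cleanly.
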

As a corollary, we see that a sparse graph $G$ is always
$1.5$-orientable. Moreover, we see that if $G$ is $1.5$-orientable,
then $G$ will remain $1.5$-orientable after removing some edges and if
$G$ is not $1.5$-orientable then adding edges cannot make it
$1.5$-orientable.
 
Our next theorem shows that the threshold for being $1.5$-orientable
for the random graph $G(n,c/n)$ is the same as the one for the
appearance of a giant rigid component.
\begin{theorem}\label{prop:orientabilite}
Let $G=G(n,c/n)$ with $c > 0$, and let $q \in [0,1]$.
\begin{itemize}
\item[(a)] if $c<c^*(q)$, $G$ is 1.5-orientable a.a.s.
\item[(b)] if $c>c^*(q)$, $G$ is not 1.5-orientable a.a.s.
\end{itemize}
\end{theorem}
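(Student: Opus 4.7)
The plan combines the flow characterization of Proposition~\ref{prop:flow} with a peeling reduction and an asymptotic core analysis in the spirit of \cite{Janson,Lelarge}.

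\emph{Step 1 (reduction to a mixed core).} I would introduce the $(2,3)$-core $C$, obtained by iteratively deleting every type~1 vertex of current degree at most~$1$ and every type~2 vertex of current degree at most~$2$, and show
\[
G \text{ is 1.5-orientable} \iff C \text{ is 1.5-orientable}.
\]
Indeed, given any violating subgraph $G'\subseteq G$ with $m' > n_1'+2n_2'$, peel $G'$ \emph{internally}: each removal of a type~1 vertex of internal degree $\le 1$ (resp.\ a type~2 vertex of internal degree $\le 2$) subtracts at most~$1$ edge and~$1$ from $n_1'$ (resp.\ at most~$2$ edges and~$1$ from $n_2'$), so the strict violation is preserved. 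The surviving subgraph $H$ is contained in $C$, because at every stage of the global peeling the vertices of $H$ retain at least their full $H$-degree and thus are never eligible for removal.

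\emph{Step 2 (asymptotic fluid limit for $C$).} Using the Poisson-cloning / configuration model analysis of \cite{Janson,Lelarge}, specialised to a mixture of two Poisson($c$) populations with peeling thresholds $1$ and $2$, a standard fixed-point calculation yields, whenever $C\ne\emptyset$,
\[
\frac{n_1(C)}{n}\to (1-q)Q(\xi,2),\qquad \frac{n_2(C)}{n}\to qQ(\xi,3),\qquad \frac{2m(C)}{n}\to \xi\bigl[(1-q)Q(\xi,1)+qQ(\xi,2)\bigr],
\]
where $\xi=\xi(c,q)\ge 0$ is the largest solution of
\[
c\bigl[(1-q)Q(\xi,1)+qQ(\xi,2)\bigr]=\xi.
\]

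\emph{Step 3 (identifying the threshold).} Comparing $m(C)$ with $n_1(C)+2n_2(C)$ in the fluid limit, the inequality $m(C)\le n_1(C)+2n_2(C)$ becomes
\[
\xi\bigl[(1-q)Q(\xi,1)+qQ(\xi,2)\bigr]\le 2(1-q)Q(\xi,2)+4qQ(\xi,3),
\]
whose equality case is exactly the defining equation of $\xi^*$ in the theorem. An elementary analysis of $\xi\mapsto c(\xi,q)$ on $(0,\infty)$ shows that for $q\le 1/2$ the first bifurcation occurs at $\xi=0$, giving $c^*(q)=1/(1-q)$, while for $q>1/2$ it occurs at a positive $\xi^*$, giving $c^*(q)=\xi^*/[(1-q)Q(\xi^*,1)+qQ(\xi^*,2)]$. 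For $c<c^*(q)$ no violating subgraph of $C$ exists a.a.s., yielding part~(a); for $c>c^*(q)$ the core itself already violates the inequality by a linear amount, certifying non-1.5-orientability via Proposition~\ref{prop:flow} and yielding part~(b).

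The main obstacle I expect is Step~3, and in particular ruling out violating proper subgraphs of $C$ below the threshold. Since $m(C)/n$ and $(n_1(C)+2n_2(C))/n$ cross transversally at $c=c^*(q)$, one needs a first-moment argument showing that on typical configurations the ``most violating'' subgraph of $C$ is either empty or all of $C$. This is also where the piecewise definition of $c^*$ genuinely arises: for $q\le 1/2$ the core is empty until $c=1/(1-q)$, while for $q>1/2$ the core is already non-empty below $c^*(q)$ but under-constrained, becoming over-constrained only at the non-trivial point $\xi=\xi^*$.
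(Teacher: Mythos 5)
Your Steps 1 and 2 are sound and align with what the paper actually proves elsewhere: the internal-peeling argument that $G$ is 1.5-orientable iff its 2.5-core is, is precisely the remark following the definition of the 2.5-core, and the fluid-limit constants you quote are exactly those of Theorem~\ref{theo_core}. Your Step 3 also correctly identifies the threshold, recovering the defining equation for $\xi^*$ from the ratio $2m(C)/(n_1(C)+2n_2(C))$.

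The genuine gap is in Step 3, on part (a), and it is the one you yourself flag. For $q>1/2$ and $\tilde{c}(q)<c<c^*(q)$ the 2.5-core already has linear size, and you must rule out \emph{every} violating subgraph of $C$, not just check $m(C)\le n_1(C)+2n_2(C)$; moreover, the Janson--Luczak fluid limit only has $o(n)$ precision, so even the global edge count might fall short by $o(n)$. Your proposed "first-moment argument showing the most violating subgraph is empty or all of $C$" is exactly the hard part and is not supplied: carried out on the configuration-model core, it would itself amount to a new orientability analysis of a two-type degree sequence, essentially as hard as the original question. The paper avoids both problems by a genuinely different route: it recasts 1.5-orientability as a maximum admissible spanning subgraph problem on the bipartite incidence graph $G^b$, obtains $\lim_n M_n/|E_n|$ from Theorem~3 of \cite{Lelarge} (which gives part (b) immediately once the limit is $<1$), and for part (a) — where the Lelarge result only gives $|E_n|-M_n=o(n)$ — closes the $o(n)$-to-$0$ gap with a dedicated coupling: starting from a denser $G(n,\tilde c/n)$ with $c<\tilde c<c^*(q)$, the union of alternating paths from uncovered edges is a saturated subgraph with an excess of edges over slots, hence by Lemma~\ref{lemma_important} has size at least $\alpha n$; randomly deleting edges to pass from $\tilde c$ to $c$ then decreases the gap with probability at least some $\varepsilon'>0$ at each step, so the gap hits $0$ a.a.s.\ before $c$ is reached. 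If you want to push your core-based approach through, you would need some analogue of this alternating-path-plus-coupling machinery (or an independent orientability analysis of the two-type configuration model below threshold); either is comparable in difficulty to what the paper does.
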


We now relate the notion of rigidity and $1.5$-orientability with a
new notion of core.
\begin{definition}
For a graph with type $1$ and type $2$ vertices, the \textbf{2.5-core}
is the largest induced subgraph with all type $1$ vertices with degree
at least $2$ and all type $2$ vertices with degree at least $3$.
\end{definition}
Note that this definition coincides with the 2-core (3-core, resp.) if
the graph contains only type 1 vertices (type 2, resp.).

One can show that we can construct the $2.5$-core by removing
recursively type $1$ vertices with degree at most $1$ and type $2$
vertices with degree at most $2$.
Note that the $2.5$-core can be empty and in this case, the graph is
$1.5$-orientable. More generally, a graph $G$ is $1.5$-orientable if and only
if its $2.5$-core is orientable.

Clearly the size of the $2.5$-core can only increase with the addition
of edges.
In our probabilistic setting, it turns out that for a fixed $q$, the
$2.5$-core appears at a value $\tilde{c}(q)\leq c^*(q)$.

Let
$Q(x,y)$ as before. We define
\begin{eqnarray}
\label{def:tildec}\tilde{c}(q) = \inf_{\xi>0} \frac{\xi}{(1-q)Q(\xi,1)+qQ(\xi,2)}.
\end{eqnarray}
Note that when $\xi\to 0$, we have $\frac{\xi}{(1-q)Q(\xi,1)+qQ(\xi,2)} \to \frac{1}{1-q}$, in particular $\tilde{c}(q) \leq \frac{1}{1-q}$.
Let $\tilde{\xi}(q,c)$ be the largest solution to
\begin{eqnarray}
\label{def:tildexi}\xi=c(1-q)Q(\xi,1)+cqQ(\xi,2).
\end{eqnarray} We can now state the theorem:
\begin{theorem}\label{theo_core}
  Let $G=G(n,c/n)$ with $c > 0$ and let $q \in [0,1]$. Let $Core$ be the 2.5-core of
  $G$, $n_1(Core)$ ($n_2(Core)$, resp.) be the number of nodes
  of type 1 (type 2, resp.) in the core and $m(Core)$ be the number of edges
  in the core.  We have
\begin{itemize}
\item[(a)] if $c<\tilde{c}(q)$ and $q>0$, then a.a.s. the 2.5-core has
  $o_p(n)$ vertices.
\item[(b)] if $c>\tilde{c}(q)$, then a.a.s. the 2.5-core is such that $n_1(Core)/n \to (1-q)Q(\tilde{\xi}(q,c)),2)$, $n_2(Core)/n \to
  qQ(\tilde{\xi}(q,c)),3)$,\\ and $2m(Core)/n \to
  \tilde{\xi}(q,c)\left((1-q)Q(\tilde{\xi}(q,c)),1)+qQ(\tilde{\xi}(q,c)),2)\right)$.
\end{itemize}
\end{theorem}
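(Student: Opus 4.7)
The plan is to adapt the peeling-process analysis of \cite{Janson} to the two-type setting. First, observe that the $2.5$-core is the output of the \textbf{peeling algorithm}: iteratively delete any type $1$ vertex of current degree $\leq 1$ and any type $2$ vertex of current degree $\leq 2$, in arbitrary order. A standard argument shows that the final graph is order-independent and coincides with the $2.5$-core. Next, replace $G(n,c/n)$ by a Poisson-cloning/configuration model $\tilde G$ in which each vertex is given i.i.d.\ $\mathrm{Poisson}(c)$ half-edges that are matched uniformly at random; $\tilde G$ is contiguous with $G(n,c/n)$ (after conditioning on the edge count), so asymptotic statements on $(n_1(Core),n_2(Core),m(Core))$ transfer between the two models.

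Encode the peeling as a continuous-time dart-removal process: assign each half-edge an i.i.d.\ $U(0,1)$ ``death mark'' and process darts in decreasing order of marks, additionally killing every remaining half-edge of a vertex whose surviving degree falls below its threshold ($2$ for type $1$, $3$ for type $2$). The state $(N^{(i)}_k(t))_{i\in\{1,2\},\,k\geq 0}$ counting type $i$ vertices with $k$ remaining darts at time $t$ is Markovian, and a Wormald-type fluid limit argument shows that, after rescaling by $n$, it concentrates on a deterministic trajectory parameterised by a single scalar $\xi$, the mean number of surviving darts per vertex. By Poisson thinning, along this trajectory the proportion of $n$ consisting of surviving type $1$ (resp.\ type $2$) vertices equals $(1-q)Q(\xi,2)+o_p(1)$ (resp.\ $qQ(\xi,3)+o_p(1)$), and the sum of surviving degrees per vertex equals $\xi[(1-q)Q(\xi,1)+qQ(\xi,2)]+o_p(1)$.

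The process halts at the largest $\xi \geq 0$ at which the self-consistency equation~(\ref{def:tildexi}) holds: this is exactly the condition that a surviving dart's endpoint meets its degree threshold with the correct marginal probability, so that no further vertex falls below its constraint. For $c > \tilde c(q)$, this root is the $\tilde\xi(q,c) > 0$ of the theorem, and substituting it into the three expressions above yields part (b). For $c < \tilde c(q)$, the definition~(\ref{def:tildec}) gives $\xi > c[(1-q)Q(\xi,1)+qQ(\xi,2)]$ on all of $(0,\infty)$, so the peeling trajectory admits no interior stopping point and runs until essentially the whole graph is removed, giving $o_p(n)$ vertices in the core.

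The main obstacle is the subcritical regime: one must rule out that a microscopic dense subgraph survives the peeling and generates a macroscopic $2.5$-core through an ``avalanche'' invisible to the fluid limit. I would handle this via a martingale concentration bound on the dart-removal chain, exploiting that for $c < \tilde c(q)$ the gap $\xi - c[(1-q)Q(\xi,1)+qQ(\xi,2)]$ is bounded away from $0$ on any compact subinterval $[\eps, c]$, so that the drift of the process uniformly pushes $\xi$ toward $0$. The hypothesis $q > 0$ in (a) is natural here: at $q = 0$ the critical $\xi$ descends to the boundary $\xi = 0$, and the $o_p(n)$ conclusion then requires the separate classical argument that the $2$-core of $G(n,c/n)$ is $o(n)$ for $c < 1$, which holds because $G(n,c/n)$ is then subcritical.
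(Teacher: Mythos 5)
Your proposal follows the same route the paper takes: reduce to the peeling algorithm, pass to a configuration-model/Poisson-degree setting, analyze the peeling as a Markov chain with a fluid limit, and read off the limiting sizes from the value $\tilde\xi$ at the halting time. The formulas you arrive at, and the role of $\tilde\xi$ as $c$ times the survival probability of a random half-edge, match the paper exactly, and your discussion of the subcritical case via the drift $\xi - c[(1-q)Q(\xi,1)+qQ(\xi,2)]$ being positive on $(0,\infty)$ is the same sufficient condition the paper uses.

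One point is stated loosely and, as written, would not compute the $2.5$-core: you say ``assign each half-edge a death mark and process darts in decreasing order of marks.'' That prescription eventually kills \emph{every} half-edge, light or heavy, so the terminal state is empty. The peeling process should only remove half-edges incident to light vertices (and, as a consequence of revealing the partner, one additional uniformly random half-edge per light removal); this is precisely the alternating ``random light ball / random ball'' scheme of Janson--Luczak that the paper uses, run in continuous time at rate equal to the current number of balls. With that device, the random ball is exactly the unrevealed partner of the light ball, and the total-ball count is a clean rate-$1$, jump-$2$ death process to which Lemma~4.3 of \cite{Janson} applies, giving $\sup_{t\le\tau}|L(t)+H(t)-2me^{-2t}|=o_p(n)$; the bin composition is then controlled by Lemma~4.4. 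Your appeal to a generic Wormald-style differential-equations method would also work, but the death-process lemmas are tailor-made for exactly this alternating scheme and avoid having to verify boundedness and Lipschitz conditions from scratch. I would rewrite the ``death mark'' paragraph to say that light half-edges are removed at unit rate, each removal also deleting its (newly revealed) partner, and the process stops at the first instant no light half-edges remain. With that fix, the rest of your sketch aligns with the paper's argument, including the observation that the degree sequence is only known in distribution (Poisson), which is handled in Section~6 of \cite{Janson}.
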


\begin{remark}
When the core is not $o_p(n)$, i.e., when $c>\tilde{c}(q)$, we have
\begin{eqnarray*}
  \frac{m(Core)}{n_1(Core) +2n_2(Core)} \to \frac{\tilde{\xi}(q)}{2} \frac{(1-q)Q(\tilde{\xi}(q)),1)+qQ(\tilde{\xi}(q)),2)}{(1-q)Q(\tilde{\xi}(q)),2)+2qQ(\tilde{\xi}(q)),3)}.
\end{eqnarray*}
In particular, if this ratio is larger than one, then the 2.5-core is
not 1.5-orientable. 
A simple computation shows that this ratio becomes larger than one
exactly for $c>c^*(q)\geq \tilde{c}(q)$.
Moreover, we have $c^*(q) = \tilde{c}(q)=\frac{1}{1-q}$ for $q\leq 0.5$ and $c^*(q)
> \tilde{c}(q)$ for $q>0.5$.
\end{remark}

\begin{remark}
  When $q$ is fixed and we increase $c$ from 0 to infinity, it is easy
  to note the following from previous theorem: when $q\leq 1/2$, the
  size of the 2.5-core is continuous in $c$ whereas for $q>1/2$, the
  2.5-core appears discontinuously.
\end{remark}


In the absence of sliders ($q=0$), the largest rigid
  component is closely related to the $3+2$-core \cite{Moore}.  This
  led the authors of \cite{Moore} to formulate a conjecture on the
  size of the $3+2$-core. We introduce now a generalization of the
  $3+2$-core which will play a role in our proof of
  Theorem \ref{theo_continuous}.

\begin{definition}
  Starting from the 2.5-core, one constructs a larger subgraph as
  follows: add recursively type 1 vertices which are linked by one
  edge with the current subgraph, and type 2 vertices which are linked
  by two edges with the current subgraph. The resulting subgraph is
  called the \textbf{$2.5+1.5$-core}.
\end{definition}
Note that this definition coincides with the 2+1-core (3+2-core, resp.)
if the graph contains only type 1 vertices (type 2, resp.).

Furthermore, we also compute the threshold and the size of the
2.5+1.5-core. This proves a conjecture in \cite{Moore} on the
3+2-core. The proof follows again the ideas in~\cite{Janson}.  We use
the same definitions of $\tilde{c}(q)$ and $\tilde{\xi}(q)$ as before
and state the following theorem:
\begin{theorem}\label{theo_core+}
Let $G=G(n,c/n)$ with $c > 0$ and $q \in [0,1]$. Let $Core+$ be the $2.5+1.5$-core of
  $G$, and $n(Core+)$ the number of vertices inside the
  $2.5+1.5$-core.  If $c>\tilde{c}(q)$, where $\tilde{c}(q)$ is defined by (\ref{def:tildec}), then a.a.s., $n(Core+)/n\to 1-e^{-\tilde{\xi}}-q \tilde{\xi} e^{-\tilde{\xi}}$, where $\tilde{\xi}$ is defined in (\ref{def:tildexi}).
\end{theorem}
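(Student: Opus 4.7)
The plan is to mirror the strategy used in the proof of Theorem \ref{theo_core} and to extend it by an attachment analysis based on a local tree characterization of $Core+$.

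First I would establish the following local characterization in the Poisson Galton-Watson tree $T$ with Poisson$(c)$ offspring and i.i.d.\ types that is the local weak limit of $G(n,c/n)$: a vertex $v$ lies in the $2.5+1.5$-core iff $v$ has at least $1$ (if type 1) or at least $2$ (if type 2) \emph{useful} neighbors, where a neighbor $u$ of $v$ is called useful iff, looking only at $u$'s subtree not containing $v$, $u$ is type 1 with at least $1$ useful child or $u$ is type 2 with at least $2$ useful children. This is exactly the recursion used to analyze the $2.5$-core, with per-edge usefulness probability $\tilde{\xi}/c$ and $\tilde{\xi}$ the solution of (\ref{def:tildexi}). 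The "if" direction follows from the iterative construction of $Core+$: a useful neighbor $u$ of $v$ has a useful subtree reaching down to the $2.5$-core and so is itself in $Core+$, and the prescribed number of such neighbors suffices for $v$ to be added when its turn comes. The "only if" direction I prove by induction on the round at which $v$ joins $Core+$: the base case $v \in 2.5$-core provides the required useful neighbors directly from the $2.5$-core fixed-point; if $v$ is added in a later round through an edge to a previously added vertex $u$, a short case analysis based on the inductive hypothesis for $u$ produces the required useful neighbor of $v$ (either $u$ itself, or, when $u$'s inductive usefulness was supplied by the edge back to $v$, another child of $v$).

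Given the characterization, the number of useful neighbors of a random $v$ is asymptotically Poisson$(\tilde{\xi})$, so
\[
\Prob(v \in Core+) \to (1-q)Q(\tilde{\xi},1) + qQ(\tilde{\xi},2) = 1 - e^{-\tilde{\xi}} - q\tilde{\xi}e^{-\tilde{\xi}},
\]
and by local weak convergence $\mathbb{E}[n(Core+)]/n$ converges to this value. To upgrade to convergence in probability I would adapt the Janson-Luczak peeling/death process framework used in the proof of Theorem \ref{theo_core}: conditionally on the $2.5$-core (whose a.a.s.\ size, edge count and type distribution are provided by Theorem \ref{theo_core}), the complement carries a configuration-model-like structure in which the attachment process can be tracked through a functional law of large numbers on the appropriate Markov chain. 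Alternatively a second moment computation suffices, exploiting that $Core+$ membership is determined up to $o(1)$ error by a bounded-depth neighborhood and that two vertices at graph distance larger than twice this depth have asymptotically independent statuses.

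The main obstacle is the concentration step: because $Core+$ is defined by a global iterative construction, extending the peeling framework of Section \ref{sec:theo_core} to include the attachment phase requires some care, in particular the correct parametrization of the Markov chain governing the attachment process. The "only if" direction in the local characterization is also a delicate point, as one must rule out consistent attachment chains in which usefulness is only provided back along the chain; this is where the inductive argument on the round of addition must be handled carefully.
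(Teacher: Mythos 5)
Your high-level strategy — reduce to computing $\Pr(a\in Core+)$ for a uniform vertex $a$, then conclude via a second-moment / Chebyshev argument — is exactly the paper's; and your local tree criterion (``$v\in Core+$ iff $v$ has at least $t(v)$ useful neighbours'', where usefulness is the $2.5$-core message-passing recursion with per-edge survival probability $\tilde\xi/c$) is a correct and clean reformulation, for the \emph{infinite tree}, of what the paper encodes via the auxiliary object $C(a)$, the $2.5$-core computed with $a$ forced heavy: on a tree the neighbours of $a$ in $C(a)$ are precisely your useful neighbours, and the two target probabilities agree. The ``only if'' induction on the round of addition does go through: if $v$ is added via edges to earlier vertices $u_1,\dots,u_{t(v)}$, then either some $(u_i,v)\in\mathcal U$, in which case the maximal-fixed-point definition forces $v$ to have $\geq t(v)$ useful out-edges $\neq u_i$, or else no $(u_i,v)\in\mathcal U$, in which case each $u_i$'s $\geq t(u_i)$ useful out-edges are all $\neq v$ and thus each $(v,u_i)\in\mathcal U$ by maximality.

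The real gap is the passage from the tree picture to the finite random graph, which you flag but do not resolve. Membership in $Core+$ is \emph{not} a bounded-depth local event: the attachment chain from the $2.5$-core out to $a$ can a priori be long, and the $2.5$-core itself is global. The paper handles this with two concrete ingredients you would need. First, it works with the upper bound $\Pr(a\in Core+)\leq\Pr(\deg_{C(a)}(a)\geq t(a))$ and evaluates the right side exactly, not by a local-weak-limit heuristic but by rerunning the Janson–Luczak death process with $a$ declared always heavy (which perturbs $h,h_1,h_2,\tilde\xi$ only by $o(1)$). Second — and this is the crux — it shows the upper bound is tight by proving that the residual peeling started by deleting $a$ from $C(a)$ is a \emph{subcritical} branching process, so that the deleted set has size $o(n^{1/3})$ and hence induces a tree w.h.p.\ (after which the peeling can be reversed into a valid attachment chain). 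Subcriticality amounts to $2p^2_3+p^1_2<1$, equivalently $e^{-\tilde\xi}(1+\tilde\xi+q\tilde\xi^2)<1$, which in turn is exactly the statement $\partial\psi/\partial\xi>0$ at $\tilde\xi(c,q)$ for $c>\tilde c(q)$; this analytic input is what makes the whole argument close, and it is absent from your sketch. The two-point estimate $\Pr(a,b\in Core+)\leq(1+o(1))(1-e^{-\tilde\xi}-q\tilde\xi e^{-\tilde\xi})^2$ is obtained by the same device with both $a,b$ forced heavy. Without the $C(a)$ construction and the subcriticality computation, neither the first-moment evaluation nor the asymptotic independence you invoke for the second moment is justified.
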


\begin{remark}~\label{rem_core+} For $q \leq 1/2$, we have 
  $\tilde{c}(q)=\frac{1}{1-q}$, and 
 if $c \searrow \frac{1}{1-q}$, then we have $\tilde{\xi} \to 0$, and thus
  $n(Core+)/n \to 0$.
\end{remark}

For the proof of the aforementioned theorems, the following lemma
plays a crucial role, and hence we state it already here: for a
subgraph of size $n'$, let $n'_1$ its number of vertices of type $1$ and
$n'_2$ its number of vertices of type $2$ (we do not explicitly refer to
the size nor to the subgraph, since it is clear from the context).
Let $X_{n'}$ denote the number of subgraphs of size
$n'$ with more than $n'_1+2n'_2$ edges. We have:
\begin{lemma} \label{lemma_important} Let $q \in (0,1)$, and let $G
  \in \mathcal{G}(n,p)$ with $p=c/n$ and $c < \frac{1}{1-q}$.  A.a.s., there
  exists a strictly positive constant $\alpha=\alpha(q, c -
  \frac{1}{1-q}) > 0$ such that $\sum_{1 \leq n' \leq \alpha n} X_{n'}=0$.
\end{lemma}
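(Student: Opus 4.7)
The plan is a first moment argument, exploiting the subcriticality of the induced subgraph on type-1 vertices when $c(1-q) < 1$. First I would establish that $G_1 := G[V_1]$, distributed as $G(n_1, c/n)$ with $n_1/n \to 1-q$ a.a.s., is subcritical since its effective edge parameter $cn_1/n$ converges to $c(1-q) < 1$. By classical Erd\H{o}s--R\'enyi theory, the event $\mathcal{E}$ that $G_1$ contains no complex component (equivalently, $|E(G_1[S])| \leq |S|$ for every $S \subseteq V_1$) holds a.a.s., and moreover the total cyclomatic number of $G_1$ is $O(\log n)$ a.a.s.

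Next I would reduce to counting \emph{minimally bad} subgraphs $G^*$, i.e., those with $m^* > n_1^* + 2n_2^*$ whose badness is destroyed by removal of any single vertex or edge. Tracking how the excess $m - n_1 - 2n_2$ changes under such a removal shows that a minimally bad $G^*$ satisfies $m^* = n_1^* + 2n_2^* + 1$, and that every type-1 vertex has degree at least $2$ while every type-2 vertex has degree at least $3$ in $G^*$. Since any bad subgraph contains a minimally bad subgraph of at most the same size, it suffices to bound $\sum_{n' \leq \alpha n}\mathbb{E}[Y_{n'}]$, where $Y_{n'}$ counts minimally bad subgraphs of size $n'$.

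Splitting the edges of $G^*$ into $E_1$ (inside $V_1^*$) and $E_2$ (incident to $V_2^*$), the bound $|E_1| \leq n_1^*$ from $\mathcal{E}$ forces $|E_2| \geq 2n_2^* + 1$, and in particular $n_2^* \geq 1$. The type-1 degree sum identity $2|E_1| + |E_{12}| \geq 2n_1^*$ combined with $|E_{12}| \leq |E_2|$ and $m^* = n_1^* + 2n_2^* + 1$ further yields $|E_1| \geq n_1^* - 2n_2^* - 1$. Since $E_1$-edges and $E_2$-edges form disjoint families of independent potential edges, the expected count factorizes as
\[
\mathbb{E}[Y_{n'_1, n'_2}] \leq \binom{n_1}{n'_1}\binom{n_2}{n'_2} \cdot \mathbb{P}\left(B_1 \geq n'_1 - 2n'_2 - 1\right) \cdot \mathbb{P}\left(B_2 \geq 2n'_2 + 1\right),
\]
with $B_1 \sim \mathrm{Bin}(\binom{n'_1}{2}, p)$ and $B_2 \sim \mathrm{Bin}(n'_1 n'_2 + \binom{n'_2}{2}, p)$. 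Each Binomial tail is bounded via Chernoff and each binomial coefficient via Stirling.

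The main obstacle is to show the resulting sum is $o(1)$ uniformly for $n' \leq \alpha n$. Parametrizing $\beta = n'/n$ and $\gamma = n'_1/n'$ and computing the leading exponent, one finds that when $\gamma$ is bounded away from $1$ the exponent is strictly negative once $\beta$ is sufficiently small, so the sum decays geometrically in that regime. The delicate regime is $\gamma$ close to $1$, where the estimate degenerates: here one must invoke the finer structure of subcritical $G_1$, namely that the event $|E(G_1[V_1^*])| \geq n'_1 - 2n'_2 - 1$ forces $V_1^*$ to consist, up to at most $2n'_2 + 1$ tree components, of whole unicyclic components of $G_1$. Since a.a.s. the total size of the unicyclic part of $G_1$ is $O(\log n)$, this restricts $V_1^*$ to $n^{O(1)}$ possible configurations, for which a direct first-moment estimate gives $o(1)$. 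Choosing $\alpha = \alpha(q, 1/(1-q) - c) > 0$ small enough so that all regimes combine to $o(1)$, Markov's inequality concludes $\sum_{1 \leq n' \leq \alpha n} X_{n'} = 0$ a.a.s.
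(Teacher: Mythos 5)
Your opening moves are sound and match the paper's setup: $G_1 = G[V_1]$ is subcritical, its components are a.a.s.\ trees and unicyclic with $O(\log n)$ vertices in the unicyclic part, a minimally bad subgraph is connected with $m^* = n_1^* + 2n_2^* + 1$ and min-degree $\geq 2$ for type~1 and $\geq 3$ for type~2, and on $\mathcal{E}$ the number of tree components of $G[V_1^*]$ is at most $2n_2^*+1$. The decisive first-moment step, however, does not close, and for a reason that is not just a matter of sharpening constants.

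First, the factorized Chernoff bound is looser than it should be and has the wrong sign in the exponent. Your two lower bounds $|E_1| \geq n_1' - 2n_2' - 1$ and $|E_2| \geq 2n_2'+1$ only sum to $n_1'$, whereas the true edge count is $n_1' + 2n_2'+1$; you are effectively discarding $2n_2'+1$ edges' worth of smallness. Writing $n' = \beta n$, $n_1' = \gamma n'$, the product $\binom{n_1}{n_1'}\binom{n_2}{n_2'}\mathbb{P}(B_1 \geq n_1' - 2n_2' -1)\mathbb{P}(B_2 \geq 2n_2'+1)$ has, after taking $\log$ and dividing by $\beta n$, a $\log\beta$-coefficient equal to $(3\gamma - 2) + 2(1-\gamma) - \gamma - (1-\gamma) = \gamma - 1 < 0$, so the bound actually \emph{diverges} as $\beta \to 0$. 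Replacing the product by the tighter $\mathbb{P}(B_1 + B_2 \geq n_1'+2n_2'+1)$ fixes the sign — the coefficient becomes $1-\gamma > 0$ — but it is still not uniform: at $\gamma = 1$ the exponent reduces to $\log\bigl(e^2(1-q)c/2\bigr)$, which is positive whenever $(1-q)c > 2/e^2 \approx 0.27$, while the lemma must cover all $(1-q)c < 1$. For $\gamma$ within $O(1/\log n)$ of $1$ (which the structural constraint $n_1^* = O(n_2^*\log n)$ permits) one would need $\beta = n'/n \leq n^{-\Omega(1)}$, so no fixed $\alpha>0$ works. In other words, the vertex-choice factor $\binom{n_1}{n_1'}$ is simply too large; no Chernoff tail can compensate it in the regime $n_2' \ll n_1'$.

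Second, your structural patch in the ``delicate regime'' misstates what the subcriticality gives you. The tree and unicyclic components of $G[V_1^*]$ need not be \emph{whole} components of $G_1$: a tree component of $G[V_1^*]$ can be an arbitrary connected subtree of a tree (or of a unicyclic component) of $G_1$, and a tree on $\Theta(\log n)$ vertices can have $n^{\Theta(1)}$ connected subgraphs. Since you allow up to $2n_2'+1$ tree components, the number of admissible $V_1^*$ is $n^{\Theta(n_2')}$, not $n^{O(1)}$, as soon as $n_2' = \omega(1)$, and the $E_2$-edge probability does not compensate this. The observation that the paper makes and that is missing here — and that collapses this count — is that one may \emph{complete} any partial component to the whole tree or unicyclic component of $G_1$, since this adds at least as many edges as vertices and so preserves badness. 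This reduces the choice to ``which whole trees of each size $i$ do I take'', for which the number of available trees of size $i$ is exponentially small in $i$, and the combinatorics is then controlled by Lemma~\ref{lem:lagrange} and the tail estimate of Lemma~\ref{logntrees}. Without that reduction, the first moment is off by an exponential factor in exactly the regime that matters.
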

\begin{remark} Lemma~\ref{lemma_important} will also play the role of
  the Lemma~4.1 in~\cite{GaoWormald}, or Proposition 3.3
  in~\cite{Moore}. Lemma~4.1 in~\cite{GaoWormald} ensures that all
  subgraphs of size $u$, with $m$ edges, such that $m/u>c_1>1$ are of
  size at least $\gamma n$ for some $\gamma>0$. In our case however,
  if $n_2$, the number of type 2 sites is much smaller than $n_1$,
  this lemma cannot be used. Lemma~\ref{lemma_important} provides the
  necessary refinement.
\end{remark}
\bigskip

\section{Properties of (deterministic) sparse graphs}\label{sec:det}

We gather in this section a few properties valid for general graphs,
independently of the probabilistic setting. They will be useful later.

Given two subgraphs $A=(V_A,E_A)$ and $B=(V_B,E_B)$ of $G$, we denote
by $A\cup B$ ($A\cap B$, resp.) the subgraph of $G$ with vertex set
$V_A \cup V_B$ ($V_A \cap V_B$, resp.) and edge set $E_A\cup E_B$
($E_A \cap E_B$, resp.).

\begin{lemma}\label{lem:intuni}
Given two rigid blocks $A=(V_A,E_A)$ and $B=(V_B,E_B)$ of a sparse
graph $G$, we have
\begin{itemize}
\item if $n(A\cap B)\geq 2$, then $A\cup B$ and $A\cap B$ are rigid
  blocks;
\item if $n(A\cap B)\geq 1$ and $\min(n_1(A),n_1(B))\geq 3$, then $n_1(A\cap B)\geq
  3$ and in particular $A\cup B$ and $A\cap B$ are rigid blocks.
\end{itemize} 
\end{lemma}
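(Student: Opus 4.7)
The plan is to exploit the edge inclusion-exclusion identity $m(A)+m(B)=m(A\cup B)+m(A\cap B)$, which follows from $|E_A\cup E_B|+|E_A\cap E_B|=|E_A|+|E_B|$. Since $A$ and $B$ are vertex-induced in $G$, one checks that $E_A\cap E_B=E(G[V_A\cap V_B])$, so $A\cap B$ is itself vertex-induced. Introduce the auxiliary subgraph $U=G[V_A\cup V_B]$; then $m(A\cup B)\leq m(U)$. Four bounds are now combined: minimal rigidity of $A$ and of $B$ (valid in a sparse graph, by the remark), sparsity of $U$, and, when $n(A\cap B)\geq 2$, sparsity of $A\cap B$. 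Using the identity $n_1(U)=n_1(A)+n_1(B)-n_1(A\cap B)$, these four bounds collapse to the single key inequality
\[
(\star)\qquad \max(3,n_1(A))+\max(3,n_1(B))\;\geq\;\max(3,n_1(U))+\max(3,n_1(A\cap B)),
\]
which must hold since sparsity of $G$ was used to derive it.

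The heart of the argument is a short case analysis on whether each of $n_1(A)$, $n_1(B)$, $n_1(A\cap B)$ is at least $3$ (using $n_1(A\cap B)\leq \min(n_1(A),n_1(B))$). The check reveals a strict dichotomy: in every case $(\star)$ either \emph{fails}, which is impossible in a sparse graph, or holds with \emph{equality}. For instance, if $n_1(A\cap B)\geq 3$ then all $\max$'s reduce to their $n_1$'s and $n_1(U)=n_1(A)+n_1(B)-n_1(A\cap B)$ delivers equality; if $n_1(A),n_1(B)\geq 3$ but $n_1(A\cap B)<3$, $(\star)$ rearranges to $n_1(A\cap B)\geq 3$, a contradiction, so this configuration cannot occur; the mixed and small-$n_1$ cases are similar. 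Because $(\star)$ holds with equality whenever it holds at all, \emph{all} four inequalities used to derive it must be tight. This forces $m(A\cap B)=2n(A\cap B)-\max(3,n_1(A\cap B))$, so $A\cap B$ is minimally rigid; $m(U)=2n(U)-\max(3,n_1(U))$, so $U$ is minimally rigid; and $m(A\cup B)=m(U)$, hence $E_A\cup E_B=E(U)$ and the graph $A\cup B$ of the lemma coincides with the induced subgraph $U$. This proves the first bullet.

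For the second bullet, the borderline case $n(A\cap B)=1$ escapes the sparsity bound on $A\cap B$, but it is ruled out directly: with $m(A\cap B)=0$ and $n_1(U)\geq n_1(A)+n_1(B)-1\geq 5$, comparing $m(U)\leq 2n(U)-n_1(U)$ with $m(U)\geq m(A)+m(B)$ forces $n_1(A\cap B)\geq 2$, contradicting $n(A\cap B)=1$. Hence $n(A\cap B)\geq 2$, the first bullet applies, and its case analysis specialized to $n_1(A),n_1(B)\geq 3$, which is only compatible with $n_1(A\cap B)\geq 3$, gives the claim. The main obstacle is not depth but clean bookkeeping in the case analysis of $(\star)$: the whole argument hinges on the dichotomy being strict, so that the chain of sparsity inequalities must collapse to the sharp minimally-rigid equalities needed, and each subcase has to be checked to confirm there is no third option between ``impossible'' and ``equality''.
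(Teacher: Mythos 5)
Your proof is correct and takes essentially the same route as the paper: both compare $m(A)+m(B)$ with $m(U)+m(A\cap B)$ via inclusion--exclusion, sparsity of $G$, and minimal rigidity of $A,B$, and force the chain of inequalities to collapse to equalities; the paper isolates the elementary inequality $\max(x+y-z,3)+\max(z,3)\geq\max(x,3)+\max(y,3)$ for $\min(x,y)\geq z$ as a standalone fact and then combines it with sparsity, whereas you derive the combined inequality $(\star)$ from sparsity first and then do the (equivalent) case analysis to show it can only be an equality. Your direct ruling out of $n(A\cap B)=1$ for the second bullet replaces the paper's argument that $m(A\cap B)\geq 2n_2(A\cap B)+n_1(A\cap B)\geq 1$, but both reach the same contradiction and both arguments are valid.
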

\begin{proof}
We first note that for any $x,y,z\geq 0$, such that $\min(x,y)\geq z$, we have
\begin{eqnarray}
\label{eq:maxunin}\max(x+y-z,3)+\max(z,3) \geq \max(x,3)+\max(y,3).
\end{eqnarray}
Denoting by $m(\Delta)$ the number of edges between $V_A\setminus V_B$ and
  $V_B\setminus V_A$, we have
\begin{eqnarray*}
m(A\cup B) &=& m(A) + m(B) - m(A\cap B) +m(\Delta)\\
&=& 2n(A)-\max(n_1(A),3)+2n(B)-\max(n_1(B),3)-m(A\cap B)+m(\Delta)
\end{eqnarray*}
By the sparsity of $G$, we have $m(A\cup B)\leq 2n(A\cup B)
-\max(n_1(A\cup B),3)$, so that we get
\begin{eqnarray*}
m(A\cap B) \geq 2n(A\cap B)-\max(n_1(A),3)-\max(n_1(B),3)+\max(n_1(A\cup B),3)
 +m(\Delta).
\end{eqnarray*}
Using (\ref{eq:maxunin}), we get
\begin{eqnarray*}
m(A\cap B) \geq 2n(A\cap B)-\max(n_1(A\cap B),3) +m(\Delta).
\end{eqnarray*}
First assume that $n(A\cap B)\geq 2$, so that by sparsity of $G$, we
get $m(\Delta)=0$ and
\begin{eqnarray*}
m(A\cap B) = 2n(A\cap B)-\max(n_1(A\cap B),3).
\end{eqnarray*}
Hence, we have
\begin{eqnarray*}
m(A\cup B) &=& 2n(A)-\max(n_1(A),3)+2n(B)-\max(n_1(B),3)\\
&&\quad-2n(A\cap
B)+\max(n_1(A\cap B),3) \\
&\geq& 2n(A\cup B) - \max(n_1(A\cup B),3),
\end{eqnarray*}
so that by sparsity of $G$, we have indeed an equality and we proved
the first point.

We now assume that $n(A\cap B)\geq 1$ and $\min(n_1(A),n_1(B))\geq 3$,
so that we have $m(A\cup B)\leq 2n_2(A\cup B) +n_1(A\cup B)$ and then
\begin{eqnarray*}
m(A\cap B) \geq 2n_2(A\cap B)+n_1(A\cap B)+m(\Delta).
\end{eqnarray*}
We see that $m(A\cap B)\geq 1$ and hence
$n(A\cap B)\geq 2$. So, again by sparsity of $G$, we get
\begin{eqnarray*}
m(A\cap B)\leq 2n_2(A\cap B)+n_1(A\cap B)+\min\left(0,n_1(A\cap B)-3\right).
\end{eqnarray*}
In particular, we have $n_1(A\cap B)\geq 3$ and then the second point
follows from the first one.
\end{proof}

Next, we show that by changing one vertex from type $2$ to type
$1$, a rigid graph remains rigid. This is the content of the following
lemma:

\begin{lemma}\label{lem:changementtype}
Let $G$ be a minimally rigid graph, and let $v$ be a type 2 vertex.
Define $\tilde{G}$ as the same as $G$ where $v$ is transformed into 
a type 1 vertex. Then $\tilde{G}$ is rigid.
\end{lemma}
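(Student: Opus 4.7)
The strategy is to exhibit a spanning minimally rigid subgraph of $\tilde G$, either $\tilde G$ itself or $\tilde G$ with one carefully chosen edge removed. Writing $\tilde n_1 = n_1 + 1$ and $\tilde n_2 = n_2 - 1$, a direct comparison of the two target edge counts $\tilde n_1 + 2\tilde n_2 + \min(0, \tilde n_1 - 3)$ and $m = n_1 + 2n_2 + \min(0, n_1 - 3)$ shows they coincide whenever $n_1 \leq 2$ (and trivially when $n=1$) and differ by exactly one (the new one being smaller) whenever $n_1 \geq 3$.

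When $n_1 \leq 2$, the plan is to show $\tilde G$ itself is already minimally rigid. The edge count matches by the computation above, and every subgraph $G' \subseteq \tilde G$ has $\tilde n_1(G') \leq 3$, so the new sparsity bound $2n(G') - \max(\tilde n_1(G'), 3) = 2n(G') - 3$ is exactly the old bound that the sparse graph $G$ already satisfies.

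When $n_1 \geq 3$, one edge must be removed. The sparsity of $\tilde G$ can fail only on subgraphs $G' \ni v$ with $n_1(G') \geq 3$ that are tight in $G$, i.e., satisfy $m(G') = 2n(G') - n_1(G')$; in the sparse graph $G$ these are exactly the rigid blocks containing $v$ with $n_1 \geq 3$. The plan is to find an edge $e$ lying in \emph{all} of them: removing $e$ then decreases their edge count by $1$, restoring the new bound, while every other subgraph is unaffected, either because $v \notin G'$ (bound unchanged) or because $v \in G'$ but $n_1(G') \leq 2$ (bound still $2n(G') - 3$).

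The existence of such a common edge is exactly what Lemma~\ref{lem:intuni} yields. Let $\mathcal{B}$ be the family of rigid blocks of $G$ containing $v$ with $n_1 \geq 3$. It is nonempty since $G \in \mathcal{B}$, and by the second clause of Lemma~\ref{lem:intuni} it is closed under intersection: any two members share $v$ and both have $n_1 \geq 3$, so their intersection lies again in $\mathcal{B}$. The global intersection $B^*$ is therefore itself a rigid block with $n_1(B^*) \geq 3$, hence tight with $m(B^*) = n_1(B^*) + 2n_2(B^*) \geq 3$, and so it contains at least one edge $e$. By construction $e$ belongs to every block of $\mathcal{B}$, and $\tilde G - e$ has the correct edge count $m - 1$; thus $\tilde G - e$ is a spanning minimally rigid subgraph of $\tilde G$, proving that $\tilde G$ is rigid.

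The main conceptual move is recognizing that the family of problematic subgraphs is closed under intersection and therefore has a (unique) smallest member, which lets a single edge serve every bound simultaneously. Once Lemma~\ref{lem:intuni} is invoked, the rest is routine casework on whether $v$ belongs to the subgraph under consideration and on the value of $n_1(G')$.
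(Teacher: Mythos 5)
Your proof is correct and takes essentially the same route as the paper: in the nontrivial case $n_1(G)\geq 3$, both arguments use Lemma~\ref{lem:intuni} to produce a canonical minimal rigid block $H=B^*$ through $v$ with $n_1\geq 3$, remove a single edge from it, and check sparsity by observing that any would-be violator must contain $H$ and therefore that edge. The only stylistic difference is that the paper phrases the final check as a two-case contradiction argument (containing vs.\ not containing $H$), while you characterize the violators up front as exactly the tight $n_1\geq 3$ blocks through $v$; the content is identical.
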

\begin{proof}
  Assume first that $n_1(G)<3$. Then $\tilde{G}$ is actually even
  minimally rigid, and the statement follows: indeed, consider a
  subgraph $\tilde{H}$ of $\tilde{G}$. If $v\notin \tilde{H}$, the
  sparsity condition for $\tilde{H}$ is directly inherited from the
  sparsity of $G$. If $v\in \tilde{H}$, consider $H$ the subgraph of
  $G$ with the same vertices as $\tilde{H}$, except that $v$ is type
  2. Then from the sparsity of $G$, we have
\[
m(H) \leq 2n(H)-3,~\mbox{and}~m(H)\leq n_1(H)+2n_2(H).
\]
Since $n_1(H)<3$, it is enough to consider only the condition $m(H) \leq
2n(H)-3.$ Since $m(\tilde{H})=m(H)$, $n(\tilde{H})=n(H)$, the condition
$m(\tilde{H}) \leq 2n(\tilde{H})-3$ is true, and since
$n_1(\tilde{H})\leq 3$, this condition is enough to ensure that
$\tilde{H}$ verifies the sparsity condition. Hence  $\tilde{G}$ is sparse.
It is also clear that $\tilde{G}$ has exactly the right number of edges (we 
use again here $n_1(\tilde{G})\leq 3$). Thus $\tilde{G}$ is rigid.

Assume now $n_1(G)\geq 3$. Now $\tilde{G}$ cannot be minimally rigid
since it has one excess edge. We have to remove an edge, and the
difficulty is to remove the right one.
Define $H$ to be the smallest subgraph of $G$ such that:
\begin{itemize}
\item $H$ contains  $v$
\item $n_1(H)\geq 3$ 
\item $H$ is minimally rigid
\end{itemize}
$H$ exists since $G$ itself verifies all three conditions above.  By
Lemma~\ref{lem:intuni}, $H$ is unique and can be defined as the
intersection of all subgraphs of $G$ verifying the above
conditions. 
In particular, for any $K
\subseteq G$ satisfying the above conditions, $H \subseteq K$.
  
Choose now $e$ any edge in $H$ 
and define
$\bar{G}$ as follows:
\[
\bar{G}= \tilde{G} \backslash \{ e \} 
\]
We prove now that $\bar{G}$ is minimally rigid, which is clearly
enough to show that $G$ is rigid: first, notice that $n_1(\bar{G})=
n_1(G)+1\geq 3$; with respect to $G$, one edge is removed and one
vertex is turned from type 2 to type 1, hence the total number of
edges is correct. 
It thus remains to prove that $\bar{G}$ is
sparse. Assume $\bar{G}$ is not sparse, and take a subgraph $K$ of
$\bar{G}$ violating the sparsity condition. Note that if before
changing $v$ from type $2$ to type $1$, we have $n_1(K) < 3$, by the
argument at the beginning of the lemma, $K$ remains sparse, so we may
assume that already in $G$ we have $n_1(K) \geq 3$.
We have a few cases:\\
{\bf Case 1:} If $v\notin K$, $K$ can be seen as a subgraph of $G$;
the sparsity of $G$ implies that the sparsity condition for $K$ is
true. This contradicts the hypothesis on $K$.  {\bf Case 2:} Assume
now $v \in K$. We have, by assumption on $K$ not being sparse,
\[
m(K) = n_1(K) +2n_2(K)+1.
\] {\bf Case 2a:} Assume $H\subseteq K$. Let $K'$ be the subgraph
corresponding to the vertex set $K$ in $G$. $K'$ had at most
$n_1(K')+2n_2(K')$ edges. Now, $v$ changed its type, but also one edge
$e \in H$ has been removed, hence we have $m(K) \leq n_1(K)+2n_2(K)$,
and $K$ cannot violate
sparsity.\\
{\bf Case 2b:} Assume $H\nsubseteq K$. Define now $K'$ to be equal to
$K$, but turning vertex $v$ from type 1 back to type 2. $K'$ is a
subgraph of $G$. $G$ is sparse, hence $K'$ is sparse.  Since $n_1(K')
\geq 3$ and since $m(K) = n_1(K) +2n_2(K)+1$, we have
\[
m(K') =n_1(K')+2n_2(K').
\]
Hence $K'$ is minimally rigid (in $G$). Also, $v \in K'$, hence $K'$
satisfies all properties defining $H$, and thus, by minimality $H
\subseteq K'$.  This implies in turn that in $\tilde{G}$, $H \subseteq
K$, which is a contradiction, finishing the proof.
\end{proof}

The following proposition is closely related to the concept of ``graded-sparsity
matroids'' introduced in \cite{LeeStreinu}. It shows that the matroid
structure is retained within our slightly modified definitions.
\begin{proposition}\label{prop:matroid}
The collection of all minimally rigid graphs on $n_1$ vertices of type
$1$ and $n_2$ vertices of type $2$ is the set of bases of a matroid
whose ground set is the set of edges of the complete graph on
$n=n_1+n_2\geq 2$ vertices.
\end{proposition}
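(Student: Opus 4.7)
The plan is to verify the basis exchange axiom for the family $\mathcal{B}$ of minimally rigid graphs on the vertex set $V$. By Definition~\ref{def_rigid}, every member of $\mathcal{B}$ has exactly $m = n_1+2n_2+\min(0,n_1-3)$ edges, so it suffices to show: for any $B_1,B_2 \in \mathcal{B}$ and any $e \in E(B_1)\setminus E(B_2)$, there exists $f \in E(B_2)\setminus E(B_1)$ such that $(E(B_1)\setminus\{e\})\cup\{f\}$ is again minimally rigid.

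I will work with the sparse graph $B_1' := B_1 \setminus \{e\}$, which has $m-1$ edges. First, I analyze its rigid components $C_1,\dots,C_k$. The first bullet of Lemma~\ref{lem:intuni} implies that if two rigid blocks share at least two vertices, then their union is again a rigid block; consequently, distinct rigid components of $B_1'$ share at most one vertex. Since any single edge is on its own a rigid block (the formula in Definition~\ref{def_rigid} gives $m=1$ for $n=2$ regardless of the types), every edge of $B_1'$ lies in exactly one rigid component, so $\sum_{i=1}^{k} m(C_i) = m-1$.

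Next, a pigeonhole argument produces the desired $f$. Suppose for contradiction that every $f \in E(B_2)\setminus E(B_1)$ has both endpoints inside a single rigid component of $B_1'$; then the same holds automatically for every edge of $E(B_2)\cap E(B_1')$, since it already sits in a unique rigid component. Sparsity of $B_2$ restricted to $V(C_i)$ bounds by $m(C_i)$ the number of edges of $B_2$ with both endpoints in $V(C_i)$. Counting edges of $B_2$ with multiplicity equal to the number of rigid components whose vertex set contains both their endpoints (at least one under our hypothesis) yields $|E(B_2)| \leq \sum_i m(C_i) = m-1$, contradicting $|E(B_2)|=m$. Hence some $f \in E(B_2)\setminus E(B_1)$ has endpoints in distinct rigid components of $B_1'$.

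Finally, I verify that $B_1' \cup \{f\}$ is sparse, and hence minimally rigid since it has the right edge count. If it were not, some vertex-induced subgraph $H$ of $B_1'$ containing both endpoints of $f$ would saturate the sparsity bound, i.e., would be a rigid block; any maximal rigid block extending $H$ would be a rigid component of $B_1'$ containing both endpoints of $f$, contradicting the choice of $f$. Uniqueness of such a maximal extension follows again from Lemma~\ref{lem:intuni}, since any two maximal rigid blocks containing $V(H)$ share at least two vertices and so must coincide. The main obstacle is the first step: the piecewise sparsity bound $2n'-\max(n'_1,3)$ makes the two overlap regimes of Lemma~\ref{lem:intuni} behave differently, and some care is required to establish the edge partition into rigid components in the mixed-type setting, especially when rigid blocks with $n_1 < 3$ appear.
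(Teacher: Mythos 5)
Your proof is correct and takes a genuinely different route from the paper's. The paper verifies the exchange axiom in its dual form: given $e_2 \in E(B_2)\setminus E(B_1)$, it takes the intersection $B'$ of all rigid blocks of $B_1$ containing both endpoints of $e_2$ (again a rigid block by Lemma~\ref{lem:intuni}, and the unique inclusion-minimal one); it observes that $B'$ cannot be a rigid block of $B_2$ --- otherwise $E(B')\cup\{e_2\}$ would violate sparsity of $B_2$ --- so there is some $e_1\in E(B')\setminus E(B_2)$; and it then shows that removing $e_1$ repairs any sparsity violation created by adding $e_2$, since by minimality of $B'$ any offending subgraph must contain $B'$ and hence both endpoints of $e_1$. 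You instead delete $e$ from $B_1$ first, decompose the sparse graph $B_1'$ into rigid components, and run a global counting argument to produce $f\in E(B_2)\setminus E(B_1)$ whose endpoints lie in two different components; adding $f$ then preserves sparsity essentially for free, because no rigid block of $B_1'$ spans both endpoints. Your route is closer to the classical proof that count matroids satisfy exchange (decompose into tight components and count), and it makes the component structure do the work explicitly, at the cost of the bookkeeping in the pigeonhole step; the paper's argument is shorter because the minimality of $B'$ replaces the global count. Two small remarks. First, the proposition also requires $\mathcal{B}$ to be nonempty, i.e.\ that a minimally rigid graph exists for every $(n_1,n_2)$ with $n_1+n_2\geq 2$; the paper spends the first half of its proof constructing one, while your proposal silently assumes this. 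Second, the closing hedge about ``establishing the edge partition into rigid components'' is unnecessary: you have already supplied the needed argument --- Lemma~\ref{lem:intuni}(i) gives that distinct rigid components of a sparse graph meet in at most one vertex, and every edge is itself a rigid block (the edge count $n_1+2n_2+\min(0,n_1-3)$ equals $1$ for $n=2$ in all three type configurations), so each edge lies in exactly one component. This is precisely the sparse-graph half of the paper's Lemma~\ref{lem:unique}, which does not invoke the matroid structure, so there is no circularity even though that lemma is stated after the proposition.
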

\begin{proof}
  The case $n_1=0$ is well-known (see \cite{lest08}), so we consider only the case $n_1\geq 1$.

  We first construct a minimally rigid graph. Consider the case where
  $n_1\geq 2$. Start from one cycle with the vertices of type $1$ and
  one cycle with the vertices of type $2$. If there are only two
  vertices of a given type, then the cycle is simply an edge between
  these two vertices and if there is only one vertex of a given type,
  the cycle is empty.  Hence if $n_i<3$ for $i\in \{1,2\}$, the
  corresponding cycle contains $n_i-1$ edges and if $n_i\geq 3$, the
  corresponding cycle contains $n_i$ edges. In particular, since we
  assumed that $n_1\geq 2$, the cycle with vertices of type $1$ has
  $n_1+\min(n_1-3,0)$ edges.  If $n_2=0$, we are done.  If $n_2\geq1$,
  we select one vertex of type $1$ (denoted by $u$) and add an edge
  between this vertex and each vertex of type $2$ to get a minimally
  rigid graph. Then for $n_2\geq 3$ we are done as the graph has
  $2n_2+n_1+\min(0,n_1-3)$ edges and is sparse.  For $n_2=1,2$, we
  need to add an edge, and for example we can add one edge between a vertex of type
  $1$ different from $u$ and any vertex of type $2$.

Consider now the case $n_1=1$. The cases $n_2=1,2$ are easy, just take
the complete graph. For $n_2\geq 3$, start as above with a cycle with
vertices of type $2$ and then add an edge between all vertices of type
$2$ except one and the vertex of type $1$.

We now prove the basis exchange axiom. Let $B_i=(V,E_i)$, $i=1,2$ be
two minimally rigid graphs and $e_2\in E_2\backslash E_1$. We must
show that there exists an edge $e_1\in E_1\backslash E_2$ such that
$(V,E_1\backslash\{e_1\}\cup\{e_2\})$ is minimally rigid.  Let
$e_2=uv$. Consider all the rigid blocks of $B_1$ containing vertices
$u$ and $v$. By Lemma \ref{lem:intuni}, the intersection of these
blocks denoted by $B'=(V',E')$ is still a rigid block of $B_1$. $B'$
is not a rigid block of $B_2$, since otherwise the subgraph $E'\cup
\{e_2\}\subset E_2$ would violate the sparsity of $B_2$ (note that
$u,v\in V'$).  Hence there exists $e_1\in E'\backslash E_2$. We are
done if we prove that $B_3=(V,E_1\backslash\{e_1\}\cup\{e_2\})$
is sparse.
Consider any subgraph $H$ of $B_1$ such that sparsity
is violated in $B_1\cup\{e_2\}$. Note that $H$ is a rigid block of
$B_1$ containing both $u$ and $v$. Since $B'$ is the minimal subgraph
of $B_1$ with this property, $B' \subseteq H$, and then both endpoints
corresponding to $e_1$ are in $H$. The addition of $e_2$ violates
sparsity, but the removal of $e_1$ restores the count, and we are
done.
\end{proof}

\begin{remark}\label{rem:Theran}(due to L. Theran)
As pointed out in Remark~\ref{rem:Streinu1}, Proposition~\ref{prop:matroid} can also be deduced from the fact that, using the terminology of~\cite{Streinu}, all $(2,0,3)$-graded-sparse graphs form a matroid whose ground set is the set of edges of the complete graph together with two loops at each vertex (see~\cite{Streinu}). More precisely, let  $M_1$ be this matroid with ground set $E_1:=\{ E(K_n) \cup \mbox{ 2 loops per vertex} \}$ with independent sets $I_1$, and let $n_1$ and $n_2$ be the number of vertices of type 1 (type 2, resp.). Let $L$ be the set of edges containing exactly $1$ self-loop at each of the $n_1$ vertices of type $1$. Consider then $E_2:=\{E(K_n) \cup L\}$ and note that $E_2 \subseteq E_1$. Moreover,  $I_2 := \{ A \in I_1: A \subseteq E_2\}$ is still a matroid $M_2$, since this corresponds to a truncation of $M_1$. Finally, consider the sets $I_3:= \{ A \in I_2: A \cup L \mbox{ is independent in } M_2 \}$. Since this corresponds to a contraction of $M_2$, the resulting structure is a matroid, that corresponds exactly to the matroid described in Proposition~\ref{prop:matroid} (all elements of $I_3$ being sparse graphs with  $n$ vertices, of which $n_1$ are of type $1$ and $n_2$ of type $2$). In order to make the paper more self-contained, we opted, however, for a direct proof here.
\end{remark}
Define a decomposition of the edge set of a graph to be a collection
of rigid components such that every edge is exactly in one rigid
component, and such that isolated vertices form their own rigid
components.
\begin{lemma}\label{lem:unique}
  Any graph $G$ decomposes uniquely into rigid components. Any two
  rigid components intersect in at most one vertex.
\end{lemma}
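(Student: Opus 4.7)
The plan is to split the lemma into two parts—existence of the decomposition and the pairwise-intersection statement—with the latter being the main content. For existence, I would note that a single edge on $\{u,v\}$ satisfies $m=1=n_1+2n_2+\min(0,n_1-3)$ for every type assignment on two vertices, hence is minimally rigid by Definition \ref{def_rigid}; every edge of $G$ therefore belongs to some rigid block and thus to some rigid component, while isolated vertices each form their own component.

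For the intersection statement, I would reduce everything to the following assertion: if $V_A$ and $V_B$ are vertex sets of rigid blocks of $G$ with $|V_A\cap V_B|\geq 2$, then $V_A\cup V_B$ is again rigid. This is enough because two distinct rigid components sharing an edge share both its endpoints (hence at least two vertices), and the assertion would then contain them in a larger common rigid block, contradicting their maximality; in particular, two distinct components intersect in at most one vertex and every edge lies in a unique component.

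To prove the assertion I would work inside the matroid $M$ from Proposition \ref{prop:matroid}, where the rigidity of $V'$ in $G$ reads $E(K_{V'})\subseteq\mathrm{cl}_M(E(G[V']))$. From the hypotheses one immediately gets $E(K_{V_A})\cup E(K_{V_B})\subseteq\mathrm{cl}_M(E(G[V_A\cup V_B]))$, and the whole argument then hinges on the matroid identity
\[
r_M\!\bigl(E(K_{V_A})\cup E(K_{V_B})\bigr)\;=\;r_M\!\bigl(E(K_{V_A\cup V_B})\bigr)\qquad\text{when } |V_A\cap V_B|\geq 2,
\]
since taking closures lifts the containment above to $E(K_{V_A\cup V_B})\subseteq\mathrm{cl}_M(E(G[V_A\cup V_B]))$, which is precisely the rigidity of $V_A\cup V_B$ in $G$.

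I would prove this matroid identity by an explicit Henneberg-style construction of a minimally rigid graph on $V_A\cup V_B$ that uses only edges of $E(K_{V_A})\cup E(K_{V_B})$: start from a minimally rigid graph on $V_A$ (provided by Proposition \ref{prop:matroid}) and insert the vertices of $V_B\setminus V_A$ one by one, each time attaching the new vertex $v$ by $d(v)$ edges into already-present $V_B$-vertices (feasible because $|V_A\cap V_B|\geq 2$), where $d(v)$ equals the rank increment of $M$ on the growing complete graph—that is, $d(v)=2$ for a type-$2$ vertex, and $d(v)\in\{1,2\}$ for a type-$1$ vertex depending on whether the running count of type-$1$ vertices has already reached $3$. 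Because every new edge is incident to a brand-new vertex, sparsity is preserved on every induced subgraph, and the rank increases by exactly $d(v)$ at each step; the final graph therefore has $r_M(E(K_{V_A\cup V_B}))$ edges and is minimally rigid on $V_A\cup V_B$, which yields the identity. The main obstacle is the case-by-case verification of sparsity during each insertion, which ultimately rests on the inequality (\ref{eq:maxunin}) already used in Lemma \ref{lem:intuni} together with a careful accounting of the $\max(3,n_1(\cdot))$ term at small values of $n_1$.
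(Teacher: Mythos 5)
Your proposal is correct in substance, but it takes a genuinely different route from the paper. The paper splits the proof into two regimes: for sparse $G$ it applies Lemma~\ref{lem:intuni} directly (the equivalence classes of the relation ``share a rigid block'' give the unique decomposition, and intersections of size $\geq 2$ are ruled out since the union would be a larger rigid block), and for non-sparse $G$ it reduces to the sparse case by repeatedly removing one edge from a minimal sparsity-violating subgraph $H$ and checking, via the matroid augmentation property, that rigid components are unchanged by each removal. Your approach instead bypasses the sparse/non-sparse dichotomy entirely: you phrase rigidity of $G[V']$ as the matroid-closure condition $E(K_{V'})\subseteq \mathrm{cl}_M(E(G[V']))$ (which is indeed equivalent, because the graded-sparsity matroid on $K_{V'}$ is the restriction of $M$ to $E(K_{V'})$ -- sparsity is a local, hereditary condition), reduce the whole lemma to the single rank identity $r_M(E(K_{V_A})\cup E(K_{V_B}))=r_M(E(K_{V_A\cup V_B}))$ for $|V_A\cap V_B|\geq 2$, and establish that identity by a Henneberg $0$-extension construction starting from a basis on $V_A$ and attaching each $V_B\setminus V_A$ vertex into already-present $V_B$-vertices with the rank-increment number of edges.

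Both approaches work. The paper's is more elementary (pure edge counting and the augmentation axiom from Proposition~\ref{prop:matroid}), but needs the edge-removal reduction step, which is where its subtlety lives. Yours is arguably cleaner conceptually -- it essentially upgrades Lemma~\ref{lem:intuni}(i) from sparse $G$ to arbitrary $G$ at the level of the ambient matroid -- at the cost of the $0$-extension bookkeeping you flag: you must verify that the rank increment (1 or 2) you attach a type-$1$ vertex with never violates graded sparsity on any subgraph, which turns on the observation that a subgraph can have at most as many type-$1$ vertices as the global running graph, so the global running count $\geq 3$ and subgraph count $\geq 3$ thresholds are compatible. That check is routine but does need to be written out; the telescoping of the rank increments to $r_M(E(K_{V_A\cup V_B}))$ is the part that guarantees the final edge count matches the minimal-rigidity formula $2n-\max(n_1,3)$ regardless of the insertion order. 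One small remark: it suffices to attach only into $V_B$-vertices, as you note, precisely so that every new edge lies in $E(K_{V_B})$; this is where $|V_A\cap V_B|\geq 2$ is used, to have two distinct attachment points available at the first step.
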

\begin{proof}
  First assume $G$ is sparse.  Consider an edge $e=uv$. The edge $uv$
  itself is a rigid block. The union of all rigid blocks containing
  both $u$ and $v$ is, by Lemma~\ref{lem:intuni} part (i), still
  rigid, and this is the unique maximal block $e$ belongs to. If
  initially we had chosen another edge inside this unique maximal
  block, the result would clearly be the same (if it were larger, we
  could again apply Lemma~\ref{lem:intuni} part (i) and obtain a
  bigger block containing $e$). Thus, the set of edges forms an
  equivalence relation whose equivalence classes are given by the
  rigid components the edges belong to. Isolated vertices always
  belong to their own component, and hence the decomposition is
  unique, proving the first part for sparse graphs. For such graphs,
  the second part of the lemma follows immediately from
  Lemma~\ref{lem:intuni} part (ii). 

  Now, suppose that $G$ is not sparse and consider its rigid
  components: we will show that we can choose one edge $e=uv$,
  remove it from $G$ to obtain $G'=G\setminus uv$, and that the rigid
  components of $G'$ are the same as those of $G$. Then, repeating the
  procedure until the graph is made sparse, the lemma will be proved.
  
 First, since rigidity is monotone, a rigid component of $G'$ is a
  rigid block in $G$, and we have to show only that a rigid component
  in $G$ remains a rigid component in $G'$. Since $G$ is not sparse,
  there exists one subgraph $H$ with $n_1(H)+n_2(H)$ vertices of type
  1 (type 2, resp.) having more than
  $n_1(H)+2n_2(H)-\min\{0,n_1(H)-3\}$ edges. Among all such subgraphs
  choose a minimal subgraph $H$, i.e., any induced subgraph of $H$
  when leaving out at least one vertex is sparse. Note that $H$ is
  rigid.  Choose the edge $e=uv \in E(H)$. $H \setminus uv$ remains
  rigid.  Let $C$ be a rigid component in $G$. We have to show it is a
  rigid component
  in $G'=G\setminus uv$. We consider now three cases:\\
  i) $C$ does not contain both $u$ and $v$: then it remains
  rigid after the removal of $uv$, and there is nothing else to prove.\\
  ii) $C$ contains $u$ and $v$, but not all of $H$. Then in $G'$, $\tilde{H}:=C
  \cap H$ is sparse (as every proper subgraph of $H$, and therefore of
  $H \setminus uv$, is sparse).  By the augmentative property of
  matroids, if not yet spanning, $\tilde{H}$ can be completed to
  obtain a minimally rigid spanning subgraph $C'$ of $C$. Thus $C$
  remains rigid in $G'$.\\
  iii) $C$ contains all of $H$. Since $H \setminus uv$ is rigid, we
  can find a minimally rigid spanning subgraph $\tilde{H}$ of $H
  \setminus uv$. Again, by the augmentative property of matroids, if
  not yet spanning, $\tilde{H}$ can be completed to obtain a minimally
  spanning subgraph $C'$ of $C$, and $C$ remains rigid in $G'$.
\end{proof}

\begin{lemma}\label{lem:addedges}
Take two rigid components $R_1$ and $R_2$. Adding at most
three pairwise disjoint edges $u_iv_i$, with $u_i \in V(R_1)\setminus V(R_2)$, and 
$v_i \in V(R_2)\setminus V(R_1)$ turns $R_1 \cup R_2$ into a rigid block.
\end{lemma}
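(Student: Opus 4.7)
The plan is to combine a rigidity-deficit count with the matroid exchange axiom from Proposition~\ref{prop:matroid}. Let $M_i\subseteq R_i$ denote a minimally rigid spanning subgraph (which exists since $R_i$ is rigid). By Lemma~\ref{lem:unique}, $|V(R_1)\cap V(R_2)|\le 1$, and by Lemma~\ref{lem:intuni}(ii) the case $V(R_1)\cap V(R_2)\ne\emptyset$ forces $\min(n_1(R_1),n_1(R_2))<3$ (otherwise $R_1\cup R_2$ would itself be a rigid block, contradicting maximality of $R_1,R_2$). Setting $V=V(R_1)\cup V(R_2)$ with type counts $n_1,n_2$, I would define the rigidity deficit
\[
\Delta:=\bigl(n_1+2n_2+\min(0,n_1-3)\bigr)-\bigl(m(M_1)+m(M_2)\bigr).
\]
Writing $f(H):=\max(0,3-n_1(H))$ and letting $(k_1,k_2)$ record the type of the shared vertex (both zero in the disjoint case), a short computation gives $\Delta=f(R_1)+f(R_2)-f(V)-k_1-2k_2$, and a case analysis on $(n_1(R_1),n_1(R_2))\in\{0,1,2,\ge 3\}^2$ and the type of the shared vertex yields $\Delta\le 3$ uniformly.

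For $\Delta\in\{1,2,3\}$ I would verify, using the elementary inequality $\min(0,a-3)+\min(0,b-3)\le\min(0,a+b-3)$ (together with its shared-vertex analogue, which uses $\min(n_1(R_1),n_1(R_2))<3$), that $M_1\cup M_2$ is sparse and hence independent in the rigidity matroid of Proposition~\ref{prop:matroid} on the complete graph $K_V$. Because $M_i$ is already a basis of this matroid restricted to $V(R_i)$, any edge with both endpoints in one $V(R_i)$ added to $M_1\cup M_2$ would overflow the sparsity count of $R_i$; consequently every edge extending $M_1\cup M_2$ to a basis of $K_V$ must have one endpoint in $V(R_1)\setminus V(R_2)$ and the other in $V(R_2)\setminus V(R_1)$. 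By the basis exchange axiom, $M_1\cup M_2$ extends to a basis of $K_V$---that is, to a minimally rigid spanning subgraph of $V$---using exactly $\Delta\le 3$ such crossing edges. Pairwise disjointness of the chosen edges is then arranged greedily: at each of the at most three augmentation steps the set of admissible crossing edges is large enough to avoid any previously used endpoint, with the only exceptions being trivial configurations in which $|V(R_i)\setminus V(R_j)|$ is already small enough to force $\Delta$ to drop.

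The subcase $\Delta\le 0$ is handled separately, by showing that $R_1\cup R_2$ already contains a minimally rigid spanning subgraph, so no edges are needed. The bookkeeping identifies the unique configuration (shared type 2 vertex with $n_1(R_1)\ge 3,\ n_1(R_2)=2$) in which $M_1\cup M_2$ itself fails sparsity by exactly one edge; there one has $\Delta=-1$, and a direct matroidal argument removes the redundant edge indicated by the sparsity-violating subgraph to produce a minimally rigid spanning subgraph of $V$. I expect the main obstacle to be precisely this sparsity bookkeeping across the various shared-vertex subcases---in particular proving that $M_1\cup M_2$ is sparse whenever $\Delta\ge 1$ and correctly locating the redundant edge when $\Delta<0$; once that is done, the rest is a routine matroid augmentation guided by the deficit count.
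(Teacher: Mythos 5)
Your approach is genuinely different from the paper's. The paper reduces to sparse $G$ (reusing the reduction set up for Lemma~\ref{lem:unique}), computes in each $(n_1(R_1),n_1(R_2),t)$-case exactly how many pairwise disjoint crossing edges to add, and then directly verifies the resulting graph is sparse with the right edge count, so minimally rigid. You instead package the count into a single deficit $\Delta$ and appeal to the matroid of Proposition~\ref{prop:matroid}, which is conceptually cleaner.

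That said, there are real gaps. The main one is the disjointness step. Matroid augmentation gives you \emph{some} $\Delta$ crossing edges whose union with $M_1\cup M_2$ is a basis; it does not give you that \emph{every} choice of $\Delta$ pairwise disjoint crossing edges works, which is what the lemma asserts and what its application in Section~\ref{sec:or-rig} uses (the three disjoint edges there are produced by a random graph process, not chosen by you). The paper's direct sparsity check is precisely what closes this: it shows that for any subgraph $A\subseteq R_1$, $B\subseteq R_2$ with $\min\{n(A),n(B)\}\ge 2$, the sparsity budget for crossing edges between $A$ and $B$ dominates the number of edges added globally, and that disjointness handles the $n(A)=1$ case. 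Your ``the set of admissible crossing edges is large enough to avoid any previously used endpoint'' is exactly the assertion that needs proof, and I don't see how the matroid formalism gives it for free.

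Two smaller points. First, once you reduce to sparse $G$ (which you should, and which the opening line of the paper's proof does), the sparsity of $M_1\cup M_2$ is automatic as a subgraph of $G$; the inequality you plan to use is unnecessary. Second, with that same reduction, the configuration you flag as $\Delta=-1$ (shared type-$2$ vertex, $n_1(R_1)\ge 3$, $n_1(R_2)=2$) is in fact impossible: a computation in the style of Lemma~\ref{lem:intuni}(ii) forces $m(R_1\cap R_2)\ge 1$ and hence $n(R_1\cap R_2)\ge 2$, contradicting that two components meet in at most one vertex. So the ``remove a redundant edge'' detour is not needed, and indeed cannot be carried out inside a sparse $G$ because $M_1\cup M_2\subseteq G$ cannot violate sparsity there. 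Your $\Delta$ bookkeeping and the observation that all augmenting edges must be crossing edges (since $M_i$ is already a basis on $V(R_i)$) are correct; the proof would be complete if you added the explicit ``any disjoint crossing edges preserve sparsity'' verification.
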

\begin{proof}
  From the proof of the previous lemma, it suffices to prove the
  statement for sparse $G$. Remember that two rigid components $R_1$
  and $R_2$ intersect in either 0 or 1 vertex, and by monotonicity we
  may assume that there is no edge from $V(R_2) \setminus V(R_1)$ to $V(R_1)
  \setminus V(R_2)$. Let $n_1(R_1)=i$ and $n_1(R_2)=j$. If $R_1 \cap
  R_2=\emptyset$, then let $t=0$, and otherwise let $t \in \{1,2\}$ be
  equal to the type of the vertex in $R_1 \cap R_2$. If $t \in
  \{1,2\}$ and $\min\{i,j\} \geq 3$, by Lemma~\ref{lem:intuni}
  part(ii), $R_1 \cup R_2$ is already rigid. By a similar argument as
  in the proof of Lemma~\ref{lem:intuni} part(ii), we can show that
  the case $i \geq 3, j=2$ and $t=2$ is impossible, as in this case
  $m(R_1 \cap R_2) \geq 1$, and thus $n(R_1 \cap R_2) \geq 2$. In all
  other cases, do the following: if $\min\{i,j\} \geq 3$ and $t=0$,
  then no edge is added. If $i\geq 3$ and $j< 3$, $3-j-t$ edges are
  added, if $i < 3$ and $j < 3$ and $(i+j) \geq 3$, then $6-i-j-t$
  edges are added, and if $i < 3$ and $j < 3$ and $(i+j) < 3$, then
  $3-t$ edges are added.  It can be seen that in all cases the total
  number of edges needed for $R_1 \cup R_2$ being minimally rigid is
  correct. Moreover, the number of edges added is for any fixed value
  of $i$ monotone nondecreasing in $j$. Also, for non-intersecting
  subgraphs $A \subseteq R_1$ and $B \subseteq R_2$ the number of
  edges that can be added between $A$ and $B$ without violating
  sparsity is at least the number that can be added in case they
  intersect in one vertex. In particular, this means that for any
  subgraph $A \subseteq R_1$ with $i' \leq i$ vertices of type $1$ and
  any subgraph $B \subseteq R_2$ with $j' \leq j$ vertices of type $1$
  such that $\min\{n(A),n(B)\} \geq 2$, the number of edges that can
  be added between vertices of $A$ and $B$ without violating sparsity
  is at least the number of edges added between $R_1$ and $R_2$, and
  thus such subgraphs remain sparse. Otherwise, suppose $n(A)=1$
  and we may assume $A$ and $B$ disjoint. By disjointness of the newly
  added edges at most $1$ edge is added between $A$ and $B$. Thus, an
  originally sparse graph $B$ remains sparse after adding one vertex
  and at most one edge. Thus, all subgraphs are sparse, and the
  statement follows.
 \end{proof}

\section{Proof of Theorem~\ref{prop:orientabilite}}\label{sec:orient}

The proof of Theorem~\ref{prop:orientabilite} relies on Theorem 3 in
\cite{Lelarge} and Lemma \ref{lemma_important}.

To a simple graph $G=(V,E)$, we
associate the bipartite graph $G^b=(V^b,E^b)$ with vertex set $V^b=V\cup E$ and
an edge between $v\in V$ and $e\in E$ if and only if $v$ is
an end-point of $e$ in $G$. We say that $G^B$ is the bipartite version
of $G$. The size of a spanning subgraph $S=(V\cup
E, F)$ of $G^b$ is defined as the number of edges $|F|$ of $S$. We now
consider the case where each vertex of the original graph $V$ has a type
in $\{1,2\}$.
We say that a spanning subgraph is admissible if for each $v\in V$, the
degree of $v$ in $S$ is at most its type and for each $e\in E$, the
degree of $e$ in $S$ is at most one. 

Clearly, if $G$ is 1.5-orientable, an orientation gives a spanning
subgraph with size $|E|$ which is the maximum possible size of an
admissible spanning subgraph. Hence we have the following claim: a
graph $G$ is 1.5-orientable if and only if the size of a maximum
admissible spanning subgraph of $G^b$ is equal to $|E|$.

For the random graph $G(n,c/n)$, when types are drawn independently
at random being $1$ with probability $1-q$, and $2$ otherwise, independently
of the rest, we denote by $M_n=M_n(c,q)$ the size of the largest
admissible spanning subgraph of the bipartite version $G^b_n$ of
$G(n,c/n)$. Our previous claim translates into:
$G(n,c/n)=(V_n,E_n)$ is 1.5-orientable if and only if $M_n = |E_n|$.

The fact that $G^b_n$ satisfies the assumption of Theorem 3
\cite{Lelarge} is proved in Section~6 of \cite{Lelarge}. With the
notation of Theorem 3 in \cite{Lelarge}, we choose the set $A_n$ equal
to $E_n$ and the set $B_n$ equal to $V_n$, so that we have $D^A=2$,
$W^A=1$ and
\begin{eqnarray*}
D^B=Poi(c), &\mbox{ and }& W^B = \left\{
\begin{array}{ll}
1& \mbox{, w.p. } 1-q\\
2& \mbox{, w.p. } q.
\end{array}
\right.
\end{eqnarray*}
We now compute $\inf_{x\in [0,1]}\mathcal{F}^A(x)$, which is according to
Theorem 3 in~\cite{Lelarge} the value for the limit $\lim_{n\to
  \infty} \frac{M_n}{|E_n|}$.
Using the definitions given in Theorem 3 of~\cite{Lelarge}, we have
\begin{eqnarray*}
  g^A(x) &=& 1-x\\
  g^B(x) &=& 1-(1-q) Q(cx,1)-qQ(cx,2)\\
  \mathcal{F}^A(x) &=& 1-(1-g^B(x))^2+\frac{2}{c}\left((1-q) Q(cx,2)+2qQ(cx,3)\right).
\end{eqnarray*}
From Theorem 3 in~\cite{Lelarge}, we know that $\mathcal{F}^A(x)$ is
minimized only for $x$ solving $x=g^A\circ g^B(x)$. More precisely,
the derivative of $\mathcal{F}^A(x)$ has the same sign as $\Delta(x)
=x-g^A\circ g^B(x)$.
Then, we have (note that $\frac{d}{dx}Q(x,y) = e^{-x}\frac{x^{y-1}}{(y-1)!}$)
\begin{eqnarray}
\label{eq:defDelta}\Delta(x) &=& x-(1-q) Q(cx,1)-q Q(cx,2)\\
\nonumber\Delta'(x) &=& 1-c(1-q) e^{-cx}-c^2xqe^{-cx}\\
\nonumber\Delta''(x) &=&c^3qe^{-cx}\left(x-\frac{2q-1}{cq}\right)
\end{eqnarray}
Note that $\Delta(0)=0$, $\Delta(1)>0$. Define
$\tilde{x}=\tilde{x}(c,q)$ as the largest solution in $[0,1]$ to the
equation $\Delta(x)=0$.  Note that $\mathcal{F}^A(0) = 1$. Moreover we
have
\begin{eqnarray*}
\Delta'(1) = 1-c(1-q)e^{-c}-c^2qe^{-c}\geq 1-ce^{-c}\geq 1-e^{-1}\geq 0.
\end{eqnarray*}
We now prove that
\begin{eqnarray}
  \label{eq:infF}\inf_{x\in [0,1]}\mathcal{F}^A(x) = \min \{1,\mathcal{F}^A(\tilde{x})\}.
\end{eqnarray}

First assume that $q\leq 1/2$ so that, we have $\Delta''(x)\geq 0$ for
$x\in [0,1]$. If $1-q\leq 1/c$, then $\Delta'(0)=1-c(1-q)\geq 0$ and
we have $\Delta(x)\geq 0$ so that $\tilde{x}=0$ and the function
$\mathcal{F}^A(x)$ is increasing. Hence $\inf_{x\in
  [0,1]}\mathcal{F}^A(x)=\mathcal{F}^A(0) = 1$. Now if $1-q>1/c$, then
a similar analysis shows that $\mathcal{F}^A(x)$ is decreasing on
$[0,\tilde{x}]$ and increasing on $[\tilde{x},1]$, so that we have
$\inf_{x\in [0,1]}\mathcal{F}^A(x)=\mathcal{F}^A(\tilde{x}) < 1$.

Assume now that $q>1/2$ so that $\Delta''(x)$ vanishes once on
$(0,1)$. Hence, if $1-q>1/c$, we have $\Delta'(0)<0$ so that
$\Delta(x)< 0$ for $x\in(0,\tilde{x})$ and $\Delta(x)>0$ for $x\in
(\tilde{x},1]$.  As above, we have $\inf_{x\in
  [0,1]}\mathcal{F}^A(x)=\mathcal{F}^A(\tilde{x}) < 1$.  Consider then
the case $1-q\leq 1/c$, so that $\Delta'(0)\geq 0$. Moreover as
$\Delta'(1)\geq 0$, either $\Delta(x)$ is non-negative or there exists
$0<y<\tilde{x}$ such that $\Delta(x)$ is positive on $(0,y)$ and
$(\tilde{x},1)$ and negative on $(y,\tilde{x})$. In any case, we have
$\inf_{x\in [0,1]}\mathcal{F}^A(x) = \min
\{\mathcal{F}^A(0),\mathcal{F}^A(\tilde{x})\}$, and (\ref{eq:infF}) is
proved.

By Theorem 3 in~\cite{Lelarge}, we have
\begin{eqnarray*}
\lim_{n\to \infty} \frac{M_n}{|E_n|} = \inf_{x\in [0,1]}\mathcal{F}^A(x)=\min
\{1,\mathcal{F}^A(\tilde{x})\}.
\end{eqnarray*}
In the argument above, we showed that for $1-q>1/c$, we have
$\tilde{x}>0$ and $\mathcal{F}^A(\tilde{x})<1$ and for $1/2\leq
1-q\leq 1/c$, we have $\tilde{x}=0$ and $\inf_{x\in
  [0,1]}\mathcal{F}^A(x)=1$.  In particular, we see that for
$1-q>1/c$, $G(n,c/n)$ is not 1.5-orientable and for $1/2\leq 1- q\leq
1/c$, $G(n,c/n)$ is 'almost' 1.5-orientable, i.e., all vertices except
possibly $o(n)$ will satisfy their indegree constraints. We will show
that the graph is indeed 1.5-orientable in the last part of the proof
relying on Lemma~\ref{lemma_important}.

Before that, we consider the case $q>1/2$ and find the condition on
$c$ for $\mathcal{F}^A(\tilde{x})< 1$, where $\tilde{x}=\tilde{x}(c)$
is the largest solution to $\Delta(x)=0$ with $\Delta(x)$ defined in
(\ref{eq:defDelta}).  First note that by the previous analysis, if
$\mathcal{F}^A(\tilde{x})< 1$ and $\Delta(\tilde{x})=0$, then
$\tilde{x}$ is necessarily the largest solution to $\Delta(x)=0$.

Now using the fact that $\Delta(\tilde{x})=0$, we see that
$g^B(\tilde{x})=1-\tilde{x}$, so that we have
\begin{eqnarray*}
  \mathcal{F}^A(\tilde{x}) = 1-\tilde{x}^2 +\frac{2}{c}\left((1-q)Q(c\tilde{x},2)+2qQ(c\tilde{x},3) \right).
\end{eqnarray*}
Making the change of variable $\xi=c\tilde{x}$, we get
\begin{eqnarray*}
  \mathcal{F}^A(\tilde{x}) = 1-\frac{\xi^2}{c^2} +\frac{2}{c}\left((1-q)Q(\xi,2)+2qQ(\xi,3) \right).
\end{eqnarray*}
Define the function 
\begin{eqnarray*}
f(\xi,q) = \xi\frac{(1-q)Q(\xi,1)+qQ(\xi,2)}{(1-q)Q(\xi,2)+2qQ(\xi,3)}
\end{eqnarray*}
and recall that the definition of $\tilde{x}$ becomes for $\xi$:
\begin{eqnarray}
\label{eq:defc}\xi =c\left((1-q)Q(\xi,1)+qQ(\xi,2) \right),
\end{eqnarray}
so that we can rewrite the previous expression as
\begin{eqnarray*}
\mathcal{F}^A(\tilde{x}) = 1-\frac{1}{c} \left(
  f(\xi,q)-2\right)\left((1-q)Q(\xi,2)+2qQ(\xi,3)\right).
\end{eqnarray*}
We have $\mathcal{F}^A(\tilde{x}) < 1$ if and only if
$f(c\tilde{x},q)>2$.  Note that for $q>0$, we have $\lim_{\xi\to
  0}f(\xi,q)=2$ and $\lim_{\xi\to 0}f(\xi,0)=3/2$. Moreover, for any
$q>1/2$, one can show that there exists a unique positive solution to
$f(\xi^*,q)=2$ and for $\xi\in (0,\xi^*)$, we have $f(\xi,q)<2$, while
for $\xi>\xi^*$, we have $f(\xi,q)>2$.  Now, using (\ref{eq:defc}), we
define
\begin{eqnarray*}
c^* = \frac{\xi^*}{(1-q)Q(\xi^*,1)+qQ(\xi^*,2)},
\end{eqnarray*}
so that $c^*\tilde{x}(c^*)=\xi^*$.

Note that as a function of $c$ with $x$ fixed, $\Delta(x)$ is
non-increasing in $c$ which implies that $\tilde{x}(c)$ is
non-decreasing in $c$ (note that for $q>1/2$, by the previous
analysis, $\Delta(x)$ is always negative on the left of $\tilde{x}$
and positive on its right).

Thanks to the monotonicity of $c\mapsto \tilde{x}(c)$, we have: if
$c>c^*$, then $c\tilde{x}(c) >\xi^*$ and $f(c\tilde{x},q)>2$, i.e.,
$\mathcal{F}^A(\tilde{x}) < 1$.  Similarly, if $c<c^*$ and
$\tilde{x}(c)>0$, then we get $f(c\tilde{x},q)<2$, i.e.,
$\mathcal{F}^A(\tilde{x}) > 1$.

To summarize, we proved that for the function $c^*(q)$ defined in the
statement of the theorem, we have: for $c<c^*(q)$, a.a.s., we have $\lim_{n\to
  \infty} \frac{M_n}{|E_n|} =1$ and for $c>c^*(q)$, we have
$\lim_{n\to \infty} \frac{M_n}{|E_n|} <1$.  In particular, (b)
follows: if $c>c^\ast(q)$, the graph is a.a.s. not 1.5-orientable. We
still need to prove that if $c<c^\ast(q)$, the graph is a.a.s.
1.5-orientable.

Choose $\tilde{c}$, such that $c<\tilde{c}<c^\ast$. Let $\tilde{G}_n$
be an associated random graph and $\tilde{M}_n$ a maximum admissible
subgraph.  Construct a coupling between random graphs with different
parameters $c<\tilde{c}$, by removing edges in $\tilde{G}_n$ with the
appropriate probability.  The goal is to construct an admissible
subgraph $\bar{M}_n$ for some $G_n(\bar{c})$ with $\bar{c}\geq c$ such
that $|\bar{M}_n|=|\bar{A}_n|$. In other words, $G_n(\bar{c})$ is
1.5-orientable which implies the claim as $G_n(n,c/n)$ can be obtained
from $G_n(\bar{c})$ by removing edges.

If $|\tilde{M}_n|=|\tilde{A}_n|$, we are done. Assume then that
$|\tilde{M}_n|<|\tilde{A}_n|$ and consider the bipartite graph
$\tilde{G}_n^b$. We say that a vertex $v\in \tilde{V}_n$ is saturated
if its degree in $\tilde{M}_n$ is equal to its type. Note that if an
edge $(v,e)\in E(\tilde{G}_n^b)$ where $v\in \tilde{V}_n$ and $e\in
\tilde{E}_n$ is not covered by $\tilde{M}_n$, then the vertex $v$ is
saturated (otherwise $\tilde{M}_n$ would not be maximal hence not
maximum).  In particular, if $e\in \tilde{E}_n$ is isolated in
$\tilde{M}_n$, then each of its neighbors $u$ and $v$ is
saturated. Starting from these vertices and following the covered
edges, we can then construct alternating paths in which the edges are
alternatively covered in $\tilde{M}_n$ and uncovered. Let
$\tilde{K}^b$ be the union of all such alternating paths (see
Figure~\ref{fig:alt} for an illustration).
\begin{figure}
\includegraphics[width=10cm]{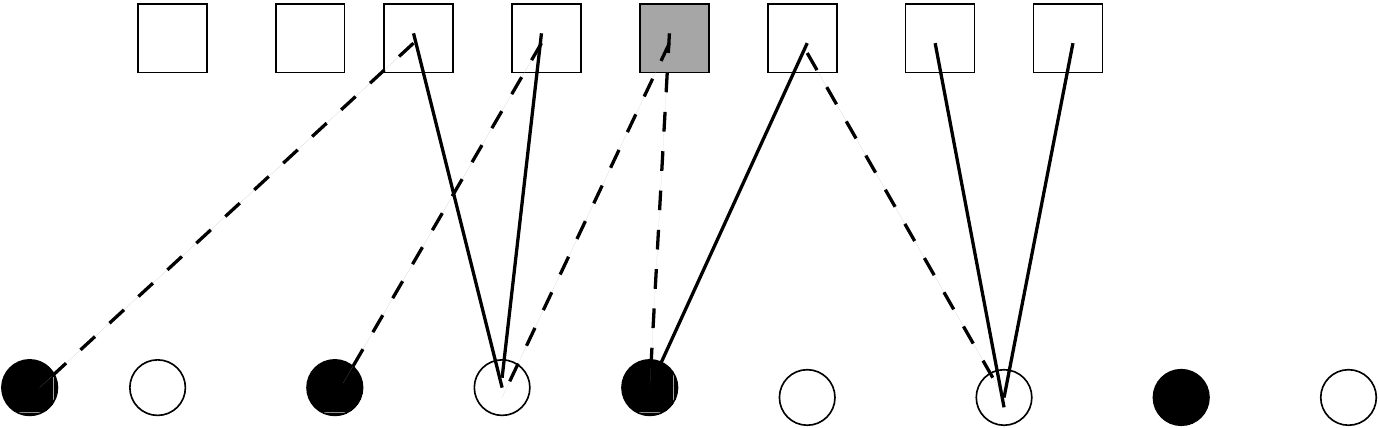}
\caption{Bipartite representation of $G_n$ and construction of the
  alternating paths.  Squares (upper row) are the edges of $G_n$;
  circles (lower row) are the vertices of $G_n$. Filled (resp. empty)
  circles are vertices of type 1 (resp. type 2). Solid lines (resp. dashed lines)
  are edges in $\tilde{G}_n^b$ that are (resp. are not) in
  $\tilde{M}_n$. Note that not all edges of $\tilde{G}_n^b$ are
  represented. The grey square is an edge of $G_n$ which is not
  covered by $\tilde{M}_n$: thus it is a starting point for
  alternating paths. \label{fig:alt}}
\end{figure}

Each vertex $v\in \tilde{V}_n\cap \tilde{K}^b$ is saturated so that
the graph $\tilde{K}$ associated to $\tilde{K}^b$ in the original
$\tilde{G}_n$ is a subgraph satisfying the hypotheses of
Lemma~\ref{lemma_important}. Hence there exists $\alpha>0$ such that
$\tilde{K}$ has size (i.e., number of vertices $|V(\tilde{K})|$) at
least $\alpha n$.

Let us call ${\mbox gap}_n=|\tilde{E}_n|-|\tilde{M}_n|$. By the
previous analysis, we know that ${\mbox gap}_n=o(n)$.  To make the
coupling between random graphs at different $c$'s explicit, attach a
uniform random variable $\mathcal{U}_{[0,1]}$ to each edge of
$\tilde{G}_n$, rank the edges according to these variables, and delete
them sequentially to construct the graphs $G_n(c)$ with
$c<\tilde{c}$. Since $|V(\tilde{K})|\geq \alpha n$, each time an edge
is removed, the probability it belongs to $\tilde{K}$ is larger than
$\varepsilon' $, for some $\varepsilon'>0$.  Moreover, each time an
edge in $\tilde{K}$ is removed, ${\mbox gap}_n$ decreases by one. Thus
the probability that deleting an edge decreases ${\mbox gap}_n$ is at
least $\varepsilon' $. Hence a.a.s.  ${\mbox gap}_n$ reaches $0$
before the graph $G_n$ with parameter $c$ is constructed. At this
point, we have found $\bar{c}\geq c$ such that
$|\bar{M}_n|=|\bar{A}_n|$ and we have proved that a.a.s. $G_n(n,c/n)$ is
$1.5$-orientable: we are done.

\section{Orientability and rigidity}\label{sec:or-rig}
In this section we will show that the threshold of having a giant
rigid component coincides with the threshold for $1.5$-orientability,
which together with Theorem~\ref{prop:orientabilite} completes the
proof of Theorem~\ref{theo_threshold}.

One part of the equivalence of the thresholds of rigidity
and $1.5$-orientability is easy:
\begin{lemma}\label{lem:rigid_nonorientable}
  If for some $c$, $G$ contains a rigid component $H$ of linear size,
  then a.a.s., for all $c' \geq c+\epsilon$ and
  $\epsilon > 0$, $G$ is not $1.5$-orientable.
\end{lemma}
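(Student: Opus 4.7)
The plan is to combine Proposition~\ref{prop:flow} (the edge-count characterization of $1.5$-orientability) with a monotone coupling between $G(n,c/n)$ and $G(n,c'/n)$. Fix $\epsilon>0$ and $c'\geq c+\epsilon$, and realize $G(n,c'/n)$ as $G(n,c/n)$ together with independent ``extra'' edges, each non-edge being added with probability approximately $(c'-c)/n$. If $G(n,c/n)$ is already not $1.5$-orientable, monotonicity (adding edges cannot restore orientability, by Proposition~\ref{prop:flow}) immediately gives the claim, so we may assume $G(n,c/n)$ is $1.5$-orientable.

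Next, I would squeeze $m(H)$ between two tight bounds. Since $H$ is a rigid component, it contains a minimally rigid spanning subgraph, so
\[
m(H)\geq n_1(H)+2n_2(H)+\min(0,n_1(H)-3)\geq n_1(H)+2n_2(H)-3.
\]
On the other hand, applying Proposition~\ref{prop:flow} to the induced subgraph $G[V(H)]$ of the (assumed) $1.5$-orientable graph $G(n,c/n)$ gives $m(H)\leq n_1(H)+2n_2(H)$. Thus $H$ is already within $3$ edges of violating orientability, and it suffices to show that the coupling produces at least $4$ new edges inside $V(H)$ a.a.s.

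For the probabilistic step, note that $|V(H)|\geq \alpha n$ by hypothesis, so the number of pairs in $V(H)$ that are non-edges of $G(n,c/n)$ is at least $\binom{\alpha n}{2}-m(H)=\Omega(n^2)$. Conditional on $G(n,c/n)$, the number of these pairs that become edges of $G(n,c'/n)$ is binomial with mean of order $\epsilon\alpha^2 n/2$, hence $\Omega(n)$. A standard Chernoff bound shows that this count exceeds $4$ with probability $1-o(1)$. On this event, $G[V(H)]$ in $G(n,c'/n)$ has $m>n_1(H)+2n_2(H)$, and Proposition~\ref{prop:flow} then certifies that $G(n,c'/n)$ is not $1.5$-orientable.

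I do not foresee a real obstacle; the only mildly delicate point is bookkeeping when $n_1(H)<3$, where the sparsity bound for a minimally rigid graph differs from the orientability bound $n_1+2n_2$. However, since $|V(H)|\geq\alpha n$ forces $n_2(H)\geq \alpha n-2$, the gap between the minimally rigid edge count and the orientability bound is at most $3$ regardless of $n_1(H)$, so the same ``four extra edges suffice'' argument applies uniformly. The monotone coupling must be handled conditionally on $G(n,c/n)$ (so that $V(H)$ is fixed before sampling the extra edges), but this is routine.
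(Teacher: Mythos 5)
Your proof is correct and follows essentially the same route as the paper: use the minimal-rigidity edge count to see $m(H)\geq n_1(H)+2n_2(H)-3$, couple $G(n,c/n)$ with $G(n,c'/n)$ by adding a fresh sparse random graph, observe that a.a.s. at least $4$ new edges land inside $V(H)$ (since $|V(H)|=\Omega(n)$), and conclude non-orientability from Proposition~\ref{prop:flow}. The only cosmetic difference is that you also bound $m(H)$ from above via the assumed orientability of $G(n,c/n)$; this step is harmless but unnecessary, since the lower bound plus $4$ fresh edges already pushes the induced subgraph on $V(H)$ past $n_1(H)+2n_2(H)$ regardless of $n_1(H)$.
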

\begin{proof}
  This lemma is similar to Lemma~5.1 of~\cite{Moore}. If $G$ contains
  a giant rigid component $H$, then $|V(H)|=n'_1+2n'_2 \geq \alpha n$
  for some $\alpha > 0$ and $|E(H)| \geq n'_1+2n'_2-3$. When
  considering $c'=c+\epsilon$ for some $\epsilon > 0$, then note that
  a graph in $G' \in \mathcal{G}(n,c'/n)$ can be obtained from $G$ by
  adding a fresh random graph with parameters $G'' \in
  \mathcal{G}(n,\epsilon(1+o(1))/n)$, where the $o(1)$ part accounts
  for edges present in the intersection of both graphs. When adding
  the fresh random graph, $\Theta(n)$ edges will be added with
  probability $1-e^{-\Omega(n)}$. For each such edge, there is
  positive probability that both of its endpoints are in $H$, and thus
  for any fixed $\epsilon > 0$, a.a.s., at least $4$
  edges will be added to $E(H)$, and for $c'$,
  $|E(H)|>n'_1+2n'_2$. Using Proposition~\ref{prop:flow}, $G$ is not
  1.5-orientable, and the lemma follows.
\end{proof}
\medskip The other direction is harder. For $q=1$ (all sites are type
2, standard 2D rigidity percolation), the authors of~\cite{Moore} use
a lemma by Theran (\cite{Theran}), stating that rigid components
have size at most $3$, or they are of size $\Omega(n)$. This is not
true for $q<1$, and we will use Lemma~\ref{lemma_important} instead.
We will first make one simple observation.

\begin{lemma}\label{lem:rigidgiant}
  Let $q < 1$ and let $G \in \mathcal{G}(n,c/n)$ with $c >
  \frac{1}{1-q}$. A.a.s., $G$ contains a giant rigid connected block.
\end{lemma}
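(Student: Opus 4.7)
The plan is to exhibit the giant rigid connected block by restricting attention to the subgraph of $G$ induced on its type-1 vertices. Let $G_1$ denote this induced subgraph. Since each vertex is independently of type 1 with probability $1-q$ and each edge is independently present with probability $c/n$, conditional on there being $N_1$ type-1 vertices (with $N_1/n \to 1-q$ a.a.s.), $G_1$ is distributed as an Erd\H os--R\'enyi graph on $N_1$ vertices with edge probability $c(1-q)/N_1 \cdot (1+o(1))$. The hypothesis $c > 1/(1-q)$ is exactly the statement $c(1-q) > 1$, so $G_1$ is in the supercritical regime.

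By the classical result on the emergence of the giant component in the Erd\H os--R\'enyi model, $G_1$ a.a.s.\ contains a unique connected component $C$ with $|V(C)| = \Theta(n)$; moreover, above the giant component threshold the number of excess edges in this component is $\Theta(n)$, so in particular $m(C) \geq n(C)$ a.a.s. I claim that $C$ is rigid. To see this, choose a spanning tree of $C$ (with $n(C)-1$ edges) and add any one further edge of $C$ to obtain a spanning, connected, unicyclic subgraph $C'$. Since every vertex of $C'$ is of type 1 and $n(C) \geq 3$, Definition~\ref{def_rigid} requires exactly $n_1(C') + 2 \cdot 0 + 0 = n(C)$ edges for minimal rigidity, and $C'$ has exactly this many edges. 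Sparsity of $C'$ is also immediate: any induced subgraph on $n' \geq 3$ vertices of a connected unicyclic graph contains at most one cycle, so it has at most $n'$ edges, matching the bound $m' \leq 2n' - \max(n_1',3) = n'$ when all vertices are of type 1; the cases $n' \leq 2$ are trivial. Thus $C'$ is a spanning minimally rigid subgraph of $C$, so $C$ is rigid.

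To conclude, it suffices to observe that $C$ is a rigid \emph{block} of $G$, that is, a vertex-induced rigid subgraph. But since $V(C)$ consists entirely of type-1 vertices, the subgraph of $G$ induced on $V(C)$ contains only edges between type-1 vertices of $V(C)$, and these are precisely the edges of $C$ by definition of $C$ as a connected component of $G_1$. Hence the induced subgraph equals $C$, which is rigid and connected, giving the desired giant rigid connected block.

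There is essentially no hard obstacle here: the substance of the argument is packaged into a standard giant-component statement for the thinned random graph $G_1$. The only small points requiring care are checking the sparsity/rigidity bookkeeping for type-1 vertices (which is cleaner than in the mixed-type case, since $\min(0, n_1' - 3)$ simplifies for $n' \geq 3$) and verifying that the induced subgraph on $V(C)$ inside $G$ coincides with $C$, which is the reason for working with the \emph{connected component} of $G_1$ rather than some other subset of type-1 vertices.
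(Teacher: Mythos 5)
Your proof is correct and follows essentially the same strategy as the paper: restrict to the subgraph induced on type-1 vertices, observe that it is supercritical since $c(1-q)>1$, extract a linear-sized connected subgraph containing exactly one cycle, and verify by hand that such a unicyclic all-type-1 graph is minimally rigid. The only cosmetic difference is that the paper builds the extra cycle by a two-stage edge exposure (first a slightly sparser supercritical graph, then sprinkling to create a cycle in the giant component), whereas you directly invoke the standard fact that the supercritical giant component has $\Theta(n)$ excess edges; both are valid, and your explicit remark that $G[V(C)]=C$ (so $C$ is genuinely a vertex-induced block) is a clean touch.
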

\begin{proof}
  We will show that the subgraph induced by the vertices of type $1$
  contains a giant rigid block: indeed, note that the subgraph
  $G' \in \mathcal{G}((1-q)n, c/n)= \mathcal{G}((1-q)n,
  \frac{c(1-q)}{(1-q)n})$. Since we are interested in the asymptotic
  behavior of such graphs, we may replace $(1-q)n$ by $n$, and the
  behavior of such a graph is like the one of $G' \in \mathcal{G}(n,
  \frac{c(1-q)}{n})=\mathcal{G}(n,c'/n)$ for some $c' > 1$. By
  standard results (see for example~\cite{Bollobas}), a.a.s.,
  $\mathcal{G}(n,\frac{c' - (c'-1)/2}{n})$ contains a giant
  connected component, and by adding a fresh random graph $G''
  \in \mathcal{G}(n, \frac{(1+o(1))(c'-1)/2}{n})$, a.a.s.  at least
  one cycle will be added. Hence, by possibly removing edges, we may
  pick a connected subgraph $H$ of linear size containing exactly one
  cycle. $H$ is the desired rigid block, since $|V(H)|=|E(H)|$,
  and every subgraph of $H$ satisfies the sparsity condition. The
  statement follows.
\end{proof}
We need one more helper lemma.  
\begin{lemma}\label{prop1}
There is $\alpha>0$ such that:
 if $G$ is a $G(n,c/n)$ random graph with $c>2$, and $G$ is not Laman
  sparse, then, a.a.s., $G$ spans a rigid connected block of
  size at least $\alpha n$.
\end{lemma}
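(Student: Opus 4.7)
The plan is to combine a first-moment estimate, the matroid circuit structure of Laman rigidity, and Proposition~\ref{prop:matroid} to locate a large rigid connected block inside any non-Laman-sparse witness.

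First, a first-moment bound shows that for some $\alpha_1=\alpha_1(c)>0$, a.a.s.\ every subgraph $H\subseteq G$ with $m(H)\geq 2n(H)-2$ has $n(H)\geq\alpha_1 n$. Indeed, the expected number of vertex subsets of size $s$ spanning at least $2s-2$ edges is at most
\[
\binom{n}{s}\binom{\binom{s}{2}}{2s-2}\left(\frac{c}{n}\right)^{2s-2},
\]
and standard Stirling estimates show this sums to $o(1)$ over $4\leq s\leq \alpha_1 n$ when $\alpha_1$ is chosen small enough in terms of $c$: each constant-sized term is $O(n^{2-s})$, and each linear-sized term with $s=\beta n$ behaves as $\exp(n\phi(\beta))$ with $\phi(\beta)\to 0^-$ as $\beta\to 0^+$.

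Second, since $G$ is not Laman-sparse, its edge set contains a circuit $C_0$ of the Laman rigidity matroid, i.e., a minimal non-Laman-sparse edge subset. A circuit satisfies $m(C_0)=2n(C_0)-2$, has minimum degree at least $2$, and is connected (otherwise splitting $C_0$ into two Laman-sparse pieces would force $m(C_0)\leq 2n(C_0)-6$, a contradiction). Removing any edge $e\in C_0$ leaves a Laman-sparse graph with exactly $2n(C_0)-3$ edges on $n(C_0)$ vertices, hence a spanning Laman-minimally-rigid subgraph; in particular $C_0$ itself is Laman-rigid. Applying the first step to $C_0$ yields $n(C_0)\geq \alpha_1 n$.

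Finally, I would upgrade Laman-rigidity of $G[V(C_0)]$ to the typed notion of rigidity used in this paper. Since the typed-sparse edge sets form a matroid (Proposition~\ref{prop:matroid}) and $G[V(C_0)]$ contains a Laman-minimally-rigid spanning subgraph with $2n(C_0)-3\geq 2n(C_0)-\max(n_1(C_0),3)$ edges, repeated basis-exchange inside the typed matroid produces a typed-minimally-rigid spanning subgraph of $G[V(C_0)]$. Combined with the connectedness of $C_0$, this provides the desired rigid connected block of size $\geq\alpha_1 n$. The main obstacle is precisely this last upgrade, since Laman-sparsity is strictly weaker than typed-sparsity in general; here Remark~\ref{rem:Theran} (presenting the typed matroid as a contraction of an extended Laman-type matroid) is what makes the upgrade possible.
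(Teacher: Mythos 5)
Your first two steps are a genuinely different (and more self-contained) route to the result the paper gets by citation: you obtain the large Laman-rigid connected block directly, via a first-moment bound on dense subgraphs plus the observation that any Laman circuit $C_0$ has $m(C_0)=2n(C_0)-2$, is connected, has minimum degree~$2$, and is Laman-rigid. The paper instead delegates exactly this step to Lemma~4.2 and Proposition~3.3 of~\cite{Moore} and Proposition~4 of~\cite{Theran}, keeping only one sentence. Both routes are fine; yours is more self-contained, theirs keeps the proof short by leaning on the literature.

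The problem is your third step. You want to pass from a Laman-minimally-rigid spanning subgraph $H'$ of $G[V(C_0)]$ to a typed-minimally-rigid spanning subgraph by ``repeated basis-exchange inside the typed matroid.'' But $H'$ has $2n(C_0)-3$ edges, while a typed basis on $V(C_0)$ has only $2n(C_0)-\max(n_1(C_0),3)$; when $n_1(C_0)>3$, $H'$ has strictly \emph{more} edges than any typed-independent set on $V(C_0)$, so $H'$ is not even independent in the typed matroid, and there is no basis-exchange step to run. The inequality $2n(C_0)-3\geq 2n(C_0)-\max(n_1(C_0),3)$ that you invoke is exactly the wrong direction for this purpose: it says the Laman basis is too big, not that it contains a typed basis. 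Remark~\ref{rem:Theran} (typed matroid as a contraction of a truncation of the Streinu--Theran matroid with loops) does not by itself tell you that $E(G[V(C_0)])$ has full rank in the typed matroid; one would have to track carefully what happens to $E(H')\cup L$ under those operations, and you do not do this. What actually closes this gap, and what the paper uses, is Lemma~\ref{lem:changementtype}: start with $C_0$ regarded as having only type~2 vertices, where Laman-minimal rigidity and typed-minimal rigidity coincide, and then change the types of the appropriate vertices from $2$ to $1$ one at a time; Lemma~\ref{lem:changementtype} guarantees rigidity (not minimal rigidity, so excess edges must be discarded at each step) is preserved throughout. Replacing your basis-exchange paragraph with this iteration of Lemma~\ref{lem:changementtype} would make the proof correct.
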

\begin{proof}
  Assume first that $G$ is a $G(n,c_0/n)$ random graph with $c_0>2$,
  and has only type 2 vertices.  If $G$ is not Laman sparse, then by
  Lemma~4.2 of~\cite{Moore}, $G$ spans a rigid component on at least
  four vertices. Now, by Proposition~3.3 of~\cite{Moore}, a.a.s. all
  rigid components of $G$ have size 1,2,3 or $\Omega(n)$. In fact, by
  applying Proposition~4 of~\cite{Theran} with $a=2$, we see that
  a.a.s. such a rigid component is of size at least $((4/c_0)^2
  e^{-3})n$. Since rigidity is preserved by addition of edges,
this statement is
  also true for any $c\geq c_0$. Note that these rigid
    components with only type 2 vertices are connected. Now,
  according to Lemma~\ref{lem:changementtype} a rigid subgraph of $G$
  remains rigid if some vertices of $G$ are changed from type 2 to
  type 1.  Hence, for any $c \geq c_0>2$, $G \in \mathcal{G}(n,c/n)$
  spans a rigid connected block which is of size at least $((4/c_0)^2
  e^{-3}) n$. The statement of the lemma follows with $\alpha
  =4e^{-3}$, taking the limit as $c_0 \to 2$.

\end{proof}

We are now able to prove the counterpart to
Lemma~\ref{lem:rigid_nonorientable}.

\begin{lemma}\label{lem:nonorientable_rigid}
  Suppose that $G \in \mathcal{G}(n,c/n)$ is not
  $1.5$-orientable. Then, for any $\epsilon > 0$, a.a.s., $G \in
  \mathcal{G}(n,(c+\varepsilon)/n)$ contains a giant rigid
  connected block $H$.
\end{lemma}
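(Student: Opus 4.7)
My plan is to prove this by case analysis on whether $c + \varepsilon > 1/(1-q)$ or not, using Lemma~\ref{lem:rigidgiant}, Lemma~\ref{lemma_important}, and Proposition~\ref{prop1}. First, since $G$ is a.a.s.\ not $1.5$-orientable at density $c/n$, Theorem~\ref{prop:orientabilite} forces $c \geq c^*(q)$. Recall that $c^*(q) = 1/(1-q)$ for $q \leq 1/2$ and $c^*(q) < 1/(1-q)$ for $q > 1/2$, and note that a short check of the defining equation gives $c^*(q) \geq 2$ whenever $q \geq 1/2$.

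In the easy case $c + \varepsilon > 1/(1-q)$, I would couple $G$ monotonically with $G^+ \in \mathcal{G}(n, (c+\varepsilon)/n)$ and apply Lemma~\ref{lem:rigidgiant} to $G^+$ to produce a giant rigid connected block, finishing immediately. In the complementary case $c + \varepsilon \leq 1/(1-q)$, the chain $c^*(q) \leq c < 1/(1-q)$ forces $q > 1/2$ (otherwise $c^*(q)= 1/(1-q)$), hence $c + \varepsilon \geq c^*(q) + \varepsilon > 2$, matching the hypothesis of Proposition~\ref{prop1}. Next, Proposition~\ref{prop:flow} combined with non-$1.5$-orientability supplies a subgraph $G' \subseteq G$ with $m' > n_1' + 2n_2'$, and Lemma~\ref{lemma_important}, applicable since $c < 1/(1-q)$, upgrades this a.a.s.\ to $|V(G')| \geq \alpha n$ for some $\alpha > 0$.

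To finish the second case, I would couple $G$ with $G^+$ by adding a fresh random graph of density $\approx \varepsilon/n$; standard concentration shows a.a.s.\ $\Theta(n)$ new edges fall inside the linear-size set $V(G')$. The goal is then to locate a sub-subgraph of $G^+$ violating Laman-sparsity, i.e.\ $m > 2n - 3$, at which point Proposition~\ref{prop1} (applicable because $c+\varepsilon > 2$) delivers a giant rigid connected block in $G^+$.

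The main obstacle is this last Laman-violation step: the bound $m' > n_1' + 2n_2' = 2n' - n_1'$ exceeds $2n' - 3$ only when $n_1' \leq 2$, whereas in general $n_1'$ may be a positive fraction of $n$, so the $\Theta(n)$ freshly added edges may not by themselves dominate $n_1'$ from the subgraph $G'$ alone. To handle this, I would work inside the $2.5$-core, which Theorem~\ref{theo_core} ensures has linear size since $c \geq c^*(q) \geq \tilde{c}(q)$, and exploit the remark following Theorem~\ref{theo_core}: the ratio $m(\mathrm{Core})/(n_1(\mathrm{Core}) + 2n_2(\mathrm{Core}))$ strictly exceeds $1$ by a positive constant for $c > c^*(q)$. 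Combined with the fresh $\varepsilon$-density edges, this quantitative excess should furnish a linear-size sub-subgraph with more than $2n - 3$ edges, and the lemma follows.
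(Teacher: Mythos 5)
Your first three steps match the paper's proof exactly: reduce to $c \geq c^*(q)$, dispose of the case $c+\varepsilon > 1/(1-q)$ via Lemma~\ref{lem:rigidgiant}, and in the remaining case deduce $q>1/2$ and $c>2$, then use Proposition~\ref{prop:flow} plus Lemma~\ref{lemma_important} to get a linear-size subgraph $H$ with $m' > n_1'+2n_2'$. You also correctly identify the obstacle: this excess does \emph{not} translate into a Laman violation when $n_1'\geq 3$.

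The proposed fix, however, does not close that gap. Your plan is to always manufacture a subgraph with $m > 2n-3$ and then invoke Lemma~\ref{prop1}, using the core ratio $m(\mathrm{Core})/(n_1(\mathrm{Core})+2n_2(\mathrm{Core}))>1+\delta$ together with $\Theta(\varepsilon n)$ fresh edges. But $m > (1+\delta)(n_1+2n_2) = (1+\delta)(2n-n_1)$ exceeds $2n-3$ only when roughly $\delta(n_1+2n_2) > n_1 - 3$, i.e.\ $n_1 \lesssim \frac{2\delta}{1+\delta}\,n$. Here $n_1(\mathrm{Core})/n \to (1-q)Q(\tilde{\xi},2)$, a positive constant bounded away from $0$ for $q<1$, while $\delta \to 0$ as $c \searrow c^*(q)$; the lemma must hold for every $\varepsilon>0$, so $c+\varepsilon$ can be arbitrarily close to $c^*(q)$. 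The fresh edges only contribute $O(\varepsilon n)$ to the excess, which likewise vanishes. So in general no Laman-violating subgraph is produced by this route, and the argument stalls. In short, your plan \emph{always} needs a Laman violation, but one need not exist.

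The paper sidesteps this with a dichotomy. Choose $H$ \emph{minimal} (in vertex count) with $m(H)>n_1(H)+2n_2(H)$; by minimality, every proper subgraph $\tilde{H}\subsetneq H$ satisfies $m(\tilde{H}) \leq \tilde{n}_1+2\tilde{n}_2$. For $\tilde{n}_1\geq 3$ this is already sparsity. For $\tilde{n}_1\in\{0,1,2\}$ either $m(\tilde{H})$ exceeds the sparsity bound --- and then $m(\tilde{H})>2n(\tilde{H})-3$, which \emph{is} a Laman violation, so Lemma~\ref{prop1} applies and you are done --- or it does not, in which case $\tilde{H}$ is sparse. If no Laman violation arises, all proper subgraphs of $H$ are sparse, so after deleting the excess edges (possible without disconnecting $H$ since $m(H)\geq n(H)$), $H$ itself is minimally rigid and connected of size $\geq \alpha n$. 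You should not try to force a Laman violation but instead accept that when none exists, $H$ directly furnishes the giant rigid connected block.
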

\begin{proof}
Assume now that $G$ is not
  1.5-orientable. We have shown in Theorem~\ref{prop:orientabilite}
  that the threshold for $1.5$-orientability $c^*(q)$ satisfies
  $c^*(q) = \frac{1}{1-q}$ for $q \in [0,1/2]$ and $c^*(q) <
  \frac{1}{1-q}$ for $q \in (1/2,1]$, and since we are interested in a
  statement that holds a.a.s., we may assume $c \geq
  c^*(q)$. For $q \in [0,1/2]$, for any $\varepsilon > 0$,
  $c+\varepsilon > \frac{1}{1-q}$, and by Lemma~\ref{lem:rigidgiant},
a.a.s. $G$ contains a connected giant rigid component. 

We may therefore assume $q > 1/2$ and $c < \frac{1}{1-q}$ and have to
show that in this case a.a.s. $G \in \mathcal{G}(n,c/n)$ contains a
giant rigid connected block $H$, implying the statement since the property
of containing a giant rigid connected block is monotone. Notice that
for $q>1/2$, $c^*(q)>2$; since we may assume $c\geq c^*(q)$, we may
assume $c>2$.  By Proposition~\ref{prop:flow}, there exists a subgraph
$H\subseteq G$ with $n'_1$ vertices of type $1$, $n'_2$ vertices of
type 2, such that $|E(H)|=m'>n'_1+2n'_2$. Among all such subgraphs,
let $H$ be minimal with respect to the number of vertices for this
property (if there are several choices, pick an arbitrary such
$H$). By minimality, $H$ is connected. Since now $c <
\frac{1}{1-q}$, by Lemma~\ref{lemma_important}, a.a.s., $|V(H)|\geq
\alpha n$ for some $\alpha=\alpha(q,c-\frac{1}{1-q})$. Now we have to
construct a giant rigid connected block starting from $H$. We will now show
that either $G$ contains a connected giant rigid block or any arbitrary
subgraph $\tilde{H} \subseteq H$ with $V(\tilde{H})\neq V(H)$ fulfills
the sparsity condition.  Consider an arbitrary subgraph $\tilde{H}
\subseteq H$ with $V(\tilde{H})\neq V(H)$, and let $\tilde{n}_1$ and
$\tilde{n}_2$ be the numbers of type $1$ and type $2$ vertices of
$\tilde{H}$, respectively. By minimality of $H$,
\[
|E(\tilde{H})|\leq \tilde{n}_1+2\tilde{n}_2.
\]
Consider first the case $\tilde{n}_1=0$. If $|E(\tilde{H})| \geq
2\tilde{n}_2-2$, $G$ is not Laman sparse, $c>2$, hence Lemma~\ref{prop1}
ensures that $G$ contains a giant rigid connected block, and we are
done. Otherwise, $|E(\tilde{H})|\leq 2\tilde{n}_2-3$ for any subgraph
$\tilde{H}$ with $\tilde{n}_1=0$, and these subgraphs are thus
sparse. In this case we consider then subgraphs with
$\tilde{n}_1=1$. If $|E(\tilde{H})|>2\tilde{n}_2-1$, then
$|E(\tilde{H})|>2(\tilde{n}_1+\tilde{n}_2)-3$, and again $G$ is not
Laman sparse, and as before Lemma~\ref{prop1} ensures that $G$
contains a giant rigid connected block, and we are done. Otherwise,
$|E(\tilde{H})|\leq 2\tilde{n}_2-1$ for all subgraphs with
$\tilde{n}_1=1$ holds, and these subgraphs are sparse as well. In this
case we consider the case $\tilde{n}_1=2$. If
$|E(\tilde{H})|>2\tilde{n}_2+1$, then
$|E(\tilde{H})|>2(\tilde{n}_1+\tilde{n}_2)-1$, and by the same
argument as before, by Lemma~\ref{prop1}, $G$ contains a giant rigid connected block, and we are done. Otherwise, for all subgraphs with
$\tilde{n_1}=2$, we have $|E(\tilde{H})|\leq 2\tilde{n}_2+1$, and
these subgraphs are sparse as well. We then consider subgraphs
$\tilde{n}_1\geq 3$. For them, $\tilde{H}$ clearly verifies the
sparsity condition.  Thus, either we have found a giant rigid connected block in $G$, or all proper subgraphs of $H$ fulfill the sparsity
condition. In the latter case, since $|E(H)|>n'_1+2n'_2$, we can
remove some edges from $H$ so that $|E(H)|=n'_1+2n'_2$ (in the case
$n'_1 \geq 3$), or $|E(H)|=n'_1+2n'_2-1$ (in the case $n'_1=2$), or
$|E(H)|=n'_1+2n'_2-2$ (in the case $n'_1=1$), or $|E(H)|=2n'_2-3$ (in
the case $n'_1=0$), and in all cases, since $|E(H)|\geq |V(H)|$, this can be done without disconnecting $H$. In this way $H$ is minimally rigid, connected, and since
$|V(H)|\geq \alpha n$ for some $\alpha=\alpha(q,c-\frac{1}{1-q})$, $H$
provides the giant rigid connected block.
\end{proof}
Combining Lemma~\ref{lem:rigid_nonorientable} and
Lemma~\ref{lem:nonorientable_rigid}, we see that the thresholds for
$1.5$-orientability and rigidity coincide, and together with
Theorem~\ref{prop:orientabilite} the proof of the first part of
Theorem~\ref{theo_threshold} is completed.  

We turn now to the uniqueness statement of Theorem~\ref{theo_threshold},
adapting the proof of \cite{Moore}. Note first that since the size of
each giant rigid connected block is at least $n/\omega_n$ for some
function $\omega_n$ that arbitrarily slowly tends to $\infty$ as $n \to \infty$,
and since by Lemma~\ref{lem:intuni}(i) any two rigid blocks intersect
in at most one vertex, there can be at most $\omega_n(1+o(1))$ such
blocks (we suppose connected blocks here to be giant inclusion-wise maximal connected blocks).

We generate $G \in \mathcal{G}(n,c/n)$ as follows. Start with the
empty graph; order randomly the edges in $K_n$; then add sequentially
the first $m$ edges according to this ordering, with $m \sim
Bin(\binom{n}{2},c/n)$. At time $t$, the graph under construction thus 
has $t$ edges. 


Define $s:=\omega_n^3 \log n$ and suppose now that at some time $t_0$ we have two connected rigid blocks $R_1$ and $R_2$, each  of size at least
$n/\omega_n$ for some function $\omega_n$ tending to infinity arbitrarily slowly as $n \to \infty$, such that $R_1 \cup R_2$ is not yet a connected rigid block. 
Lemma~\ref{lem:addedges} ensures that is enough to add $3$ pairwise
disjoint edges between  $R_1$ and
$R_2$ to make $R_1 \cup R_2$ a giant connected rigid block. The probability that $R_1 \cup R_2$ is not a connected rigid block by time $t_0+s$ is at most
$$
(1- \Omega((\frac{1}{\omega_n}))^2)^{s}= O(n^{-\omega_n}).
$$
Since there are at most $r:=\omega_n(1+o(1))$ (giant inclusion-wise maximal) connected rigid blocks, after $r-1$ mergings the union of all such blocks forms a unique
connected rigid block: this can be seen by considering an auxiliary graph whose vertices are giant connected rigid blocks, and an edge between two vertices is added if the union of the blocks is a connected rigid block; once this auxiliary graph is a tree, the union of all blocks is also a connected rigid block. Therefore, with probability $1-O(\omega_n n^{-\omega_n})=1-O(n^{-\omega_n})$ there are in total at most
$\omega_n(1+o(1)) s$ steps with more than one giant connected rigid block.

 The probability that $m$ equals any
fixed number of edges is $O(1/\sqrt{n})$, and hence the probability
that $m$ equals one fixed number having one more than one giant
connected rigid block is at most $O(s \omega_n/\sqrt{n})=o(1)$. Since
after exactly $m$ steps we obtain $G \in
\mathcal{G}(n,c/n)$, a.a.s. there is only one unique
rigid giant connected block in $G$, and thus a.a.s. clearly also one
unique giant rigid component, and Theorem~\ref{theo_threshold}
follows.

\section{Proof of Theorem~\ref{theo_continuous}}\label{sec:theo_cont}


We first prove the following auxiliary lemmas. 
\begin{lemma}\label{lemma_degree}
  Let $G$ be a rigid graph. Then for $i \in \{1,2\}$, any vertex of
  type $i$ has degree at least $i$ in $G$.
\end{lemma}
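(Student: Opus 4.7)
The approach is to pass to a witness minimally rigid subgraph of $G$ and then extract the degree bound from a single application of the sparsity inequality. Since $G$ is rigid, I would fix a spanning minimally rigid subgraph $H \subseteq G$; because $d_G(v) \geq d_H(v)$ for every vertex $v$, it suffices to prove the degree lower bound inside $H$ itself. Write $n_1,n_2,m$ for the parameters of $H$, so that $m = n_1+2n_2+\min(0,n_1-3)$ by minimal rigidity. I would then fix a vertex $v$ of type $i$, assume for contradiction that $d_H(v) < i$, and look at the induced subgraph $H' = H \setminus \{v\}$, whose edge count is $m(H') = m - d_H(v)$.

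Provided $n \geq 3$, so that $H'$ has at least two vertices (the finitely many minimally rigid graphs on $n \leq 2$ vertices can be handled by direct inspection), applying the sparsity of $H$ to the subgraph $H'$ yields
\[
m(H') \;\leq\; n_1(H') + 2 n_2(H') + \min\bigl(0,n_1(H')-3\bigr),
\]
which, via $m(H') = m - d_H(v)$, is precisely a lower bound on $d_H(v)$. If $v$ has type $2$, then $n_1(H')=n_1$ and $n_2(H')=n_2-1$, so the right-hand side equals $m-2$ and $d_H(v) \geq 2$. If $v$ has type $1$, then $n_1(H')=n_1-1$ and $n_2(H')=n_2$; a short case split on $n_1 \geq 4$, $n_1=3$, and $n_1 \leq 2$ (the only places where the correction term $\min(0,\cdot)$ can jump) shows that in every case the right-hand side is at most $m-1$, so $d_H(v) \geq 1$. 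In either case this contradicts $d_H(v) < i$.

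The argument is short and essentially bookkeeping. The only mild subtlety, which is as close to an obstacle as this lemma presents, is the non-linear correction $\min(0,n_1-3)$ in the edge count of a minimally rigid graph: when a type-$1$ vertex is removed and $n_1 \leq 3$, one has to verify case by case that the slack between $\min(0,n_1-3)$ and $\min(0,(n_1{-}1)-3)$ is enough to force $d_H(v) \geq 1$. The calculation above shows that it always is.
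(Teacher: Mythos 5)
Your proof takes essentially the same approach as the paper: pass to a spanning minimally rigid subgraph $\bar{G}$ (your $H$), observe that $\bar{G}\setminus\{v\}$ would violate the sparsity inequality if $v$ had degree below its type, and verify the bookkeeping by cases on $n_1$ to handle the $\min(0,n_1-3)$ correction term. Your version is merely more explicit about the case split and the $n\geq 3$ hypothesis, both of which the paper's one-line proof leaves implicit.
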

\begin{proof}
  Let $\bar{G}$ be a spanning minimally rigid subgraph of $G$, and
  suppose that $v$ is a vertex of type $i$ with degree strictly less
  than $i$ in $\bar{G}$.  Considering the cases $n'_1=0,1,2$ and $n'_1
  \geq 3$ separately, we see that in each case $\bar{G} \backslash
  \left\{v \right\}$ is a subgraph of $\bar{G}$ which violates the
  sparsity condition, contradicting the sparsity of $\bar{G}$.
\end{proof}
\begin{lemma}\label{lemma_removal}
Let $G$ be a rigid graph, and $v$ a type $i$ vertex with degree $i$.
Then $G\backslash \{v\}$ is rigid (but not necessarily connected).
\end{lemma}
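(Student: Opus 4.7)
The plan is to construct a minimally rigid spanning subgraph of $G\setminus\{v\}$ starting from one of $G$. Let $\bar{G}$ be a spanning minimally rigid subgraph of $G$, which exists because $G$ is rigid. First I would apply Lemma~\ref{lemma_degree} to the rigid graph $\bar{G}$: it gives $\deg_{\bar{G}}(v)\geq i$, and since $\bar{G}\subseteq G$ and $\deg_G(v)=i$ by hypothesis, equality holds, so $\deg_{\bar{G}}(v)=i$. Consequently $\bar{G}':=\bar{G}\setminus\{v\}$ is a spanning subgraph of $G\setminus\{v\}$ with $m(\bar{G}')=m(\bar{G})-i$ edges.

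Next I would argue that $\bar{G}'$ is already minimally rigid with respect to the types inherited from $G$. Sparsity in the new type structure is immediate: any subgraph $H\subseteq\bar{G}'$ avoids $v$, so the types of its vertices agree with those in $\bar{G}$, and the sparsity inequality $m(H)\leq n_1(H)+2n_2(H)+\min(0,n_1(H)-3)$ inherited from $\bar{G}$ is exactly the sparsity condition required on $V(G)\setminus\{v\}$. For the edge count, a direct computation comparing $m(\bar{G}')=n_1+2n_2+\min(0,n_1-3)-i$ against the target $n'_1+2n'_2+\min(0,n'_1-3)$ prescribed by Definition~\ref{def_rigid} on $V(G)\setminus\{v\}$ (with $n'_j=n_j-\mathbf{1}_{i=j}$) yields equality precisely when $i=2$ or when $i=1$ and $n_1\geq 4$; in those cases $\bar{G}'$ is minimally rigid and so $G\setminus\{v\}$ is rigid.

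It then remains to dispose of the apparent edge case $i=1$ with $n_1\leq 3$. The key point is that this regime is incompatible with the hypothesis $\deg_G(v)=1$: since $n_1-1\leq 2<3$, the subgraph of $\bar{G}$ induced on $V(G)\setminus\{v\}$ obeys the Laman bound $m\leq 2n-3$, and subtracting this bound from $m(\bar{G})$ forces $\deg_{\bar{G}}(v)\geq 2$, contradicting $\deg_G(v)=\deg_{\bar{G}}(v)=1$ (aside from the trivial degenerate case $n=2$, in which $G\setminus\{v\}$ is a single vertex and hence rigid by definition). The only mild subtlety I anticipate is precisely this piecewise shift of the term $\min(0,n_1-3)$ across the threshold $n_1=3$, and the observation just sketched shows that it does not ultimately cause any real trouble.
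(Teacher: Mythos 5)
Your proof is correct and follows essentially the same route as the paper's: both pass to a spanning minimally rigid subgraph $\bar{G}$, use Lemma~\ref{lemma_degree} to pin $\deg_{\bar{G}}(v)=i$, observe that sparsity of $\bar{G}\setminus\{v\}$ is inherited, check the edge-count bookkeeping splits according to $i=2$ versus $i=1,\ n_1\geq 4$, and dispose of $i=1,\ n_1\leq 3$ by a contradiction with the Laman bound on $\bar{G}\setminus\{v\}$. The only cosmetic difference is that you explicitly note the degenerate case $n=2$, which the paper leaves implicit.
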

\begin{proof}
Let $\bar{G}$ be a spanning, minimally rigid subgraph of $G$. The degree of 
$v$ in $\bar{G}$ is by Lemma~\ref{lemma_degree} still $i$. 

First, $\bar{G}\backslash \{v\}$ is sparse, since $\bar{G}$ is sparse.
If $v$ is of type $2$, removing $v$ removes two degrees of freedom and
two edges, so the constraint of the total number of edges counting for
minimal rigidity is satisfied for $\bar{G} \setminus \{v\}$. Note, however, that
$\bar{G}\backslash \{v\}$ might be the disjoint union of two rigid
blocks with no edge in between (in which case, by sparsity, both
blocks $H_1$ and $H_2$ satisfy $n_1(H_i)\geq 3$). If $v$
is type $1$ and if also $n_1(G)>3$, then removing
$v$ removes one degree of freedom and one edge, so the constraint
counting for minimal rigidity is also satisfied for $\bar{G} \setminus
\{v\}$.  

It remains to consider the
case $v$ of type $1$ and $n_1(G)\leq 3$.  If we had $n_1(G)=1$, then
$\bar{G}\backslash \{v\}$ is sparse with $n_1(\bar{G}\backslash
\{v\})=0$, and we would have
\[
m(\bar{G}\backslash \{v\})\leq 2n(\bar{G}\backslash \{v\})-3.
\]
 This would imply for $\bar{G}$
\[
m(\bar{G})=m(\bar{G}\backslash \{v\})+1 \leq 2n_2(\bar{G}) -2 
=n_1(\bar{G})+2n_2(\bar{G})-3,
\]
contradicting the rigidity of $\bar{G}$. The same reasoning excludes 
the cases $n_1(G)=2$ and $n_1(G)=3$, and the proof follows.
\end{proof}

\begin{lemma}\label{lemma_core2}
  Let $C$ be a rigid block of a graph $G$, $K_{2.5}$ the 2.5-core of $G$, and
  $K^+_{2.5}$ its $2.5+1.5$-core. If the $2.5$-core of $C$ is not
  empty, then $C \subseteq K^+_{2.5}$.
\end{lemma}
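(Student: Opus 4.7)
The plan is to prove $C \subseteq K^+_{2.5}$ by a ``peel and rebuild'' argument using Lemmas~\ref{lemma_degree} and~\ref{lemma_removal}. First I would apply the peeling construction of the 2.5-core to the subgraph $C$ alone: repeatedly remove from the current subgraph a type 1 vertex of degree at most 1 or a type 2 vertex of degree at most 2. By the equivalent peeling description of the 2.5-core recorded just after its definition, this process terminates at the 2.5-core $S$ of $C$, which is non-empty by hypothesis. I record the removals in order as $v_1,\dots,v_t$, so that $C = S\cup\{v_1,\dots,v_t\}$ and the $C$-neighbors of $v_j$ at the moment of its removal lie in $S\cup\{v_{j+1},\dots,v_t\}$.

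The crucial step is to verify that every subgraph encountered during the peeling is itself rigid. This I would establish by induction: $C$ is rigid by assumption, and if the current subgraph $C'$ is rigid and $v$ is a type $i$ vertex of degree at most $i$ in $C'$, then Lemma~\ref{lemma_degree} forces $v$ to have degree \emph{exactly} $i$, and Lemma~\ref{lemma_removal} then ensures $C'\setminus\{v\}$ is again rigid. As a byproduct, at the moment of removal each $v_j$ has exactly $i_j$ neighbors in the current subgraph, where $i_j$ is its type, and these neighbors all lie in $S\cup\{v_{j+1},\dots,v_t\}$.

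Next I would observe that $S\subseteq K_{2.5}$: indeed $S$ is a vertex-induced subgraph of $G$ in which type 1 vertices have degree at least 2 and type 2 vertices have degree at least 3, hence $S$ is contained in the largest such subgraph of $G$, which is $K_{2.5}\subseteq K^+_{2.5}$. To finish, I would reverse the peeling: starting from $K_{2.5}\subseteq K^+_{2.5}$, I add back $v_t,v_{t-1},\dots,v_1$ in this order. When $v_j$ is added, the current subgraph contains $K_{2.5}\cup\{v_{j+1},\dots,v_t\}\supseteq S\cup\{v_{j+1},\dots,v_t\}$, which in particular contains the $i_j$ peeling-time neighbors of $v_j$. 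Thus $v_j$ is linked by at least $i_j$ edges to the current subgraph, meeting the construction rule of $K^+_{2.5}$, so $v_j\in K^+_{2.5}$. Iterating over $j$ gives $\{v_1,\dots,v_t\}\subseteq K^+_{2.5}$ and hence $C\subseteq K^+_{2.5}$. The main potential obstacle is ensuring that rigidity is genuinely preserved at every single peeling step (not merely at the start and end), but this is exactly what the combination of Lemma~\ref{lemma_degree} and Lemma~\ref{lemma_removal} delivers; the remaining bookkeeping that matches peeling-time neighborhoods with reverse-addition neighborhoods is routine.
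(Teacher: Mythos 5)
Your proof is correct and follows essentially the same approach as the paper: peel $C$ down to its 2.5-core using Lemmas~\ref{lemma_degree} and~\ref{lemma_removal} to certify rigidity (hence exact degree $i$) at every step, then note the residual core sits inside $K_{2.5}$ so the peeled vertices can be re-absorbed into $K^+_{2.5}$. Your explicit reverse-order re-addition argument spells out the bookkeeping that the paper compresses into the sentence ``all the removed vertices are in the $2.5+1.5$-core,'' but the underlying mechanism is identical.
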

\begin{proof}

  Let us recursively remove vertices to construct the 2.5-core of $C$.
  $C_0:=C$ is rigid, thus by Lemma~\ref{lemma_degree}, the first
  removed vertex $v_1$, assumed to be of type $i$, has degree $i$. By
  Lemma~\ref{lemma_removal}, $C_1=C\backslash \left\{ v_1\right\}$ is
  rigid. We can iterate this process until the $2.5$-core of $C$ is
  constructed. At each step we can apply Lemmas~\ref{lemma_degree} and
  \ref{lemma_removal}, hence we only remove type $i$ vertices with
  degree exactly $i$. The union of all remaining vertices remaining
  form the $2.5$-core of $C$. Since the $2.5$-core of $C$ is not empty
  and is a subgraph of $K_{2.5}$, all the removed vertices are in the
  $2.5+1.5$-core. Therefore $C \subseteq K^+_{2.5}$.

\end{proof}
Collecting all previous results, it is now not hard to prove
Theorem~\ref{theo_continuous}.

Consider first the case $q > 1/2$. Since the threshold for the
existence of a giant rigid component $c^*(q)$ satisfies $c^*(q) <
\frac{1}{1-q}$, by monotonicity of the property of having a giant
rigid component, it suffices to show the claim for $c <
\frac{1}{1-q}$. By looking at the proof of
Lemma~\ref{lem:nonorientable_rigid}, either $G$ is not Laman-sparse,
and then by Lemma~\ref{lem:rigidgiant}, there exists some $\alpha > 0$
such that the size of the giant rigid component is at least $\alpha
n$, or a rigid giant component $H$ is found, which by
Lemma~\ref{lemma_important} is also of size at least $\alpha n$ for
some $\alpha > 0$. Thus, the transition is discontinuous for all $q >
1/2$, and the first part of the statement of
Theorem~\ref{theo_continuous} follows.

Consider now the case $q \leq 1/2$. Denote by $H$ the largest rigid
component. Consider $c=\frac{1}{1-q} +\varepsilon$ for any fixed
$\varepsilon > 0$.  Take $c=\frac{1}{1-q} +\varepsilon$,
$\varepsilon>0$. By Lemma~\ref{lem:rigidgiant}, for $c=\frac{1}{1-q}
+\varepsilon/2$, the subgraph of the vertices of type $1$ contains a
giant component. Moreover, by a result of~\cite{AjtaiKomlosSzemeredi},
this subgraph contains a.a.s. a path of length at least $(\varepsilon/2)^2
(1-q)n/5$. By adding a fresh random graph $G'' \in \mathcal{G}(n,
\frac{(1+o(1))(\varepsilon/2)}{n})$ (as before, the $o(1)$ term is for
the intersection, so that we end up with $G \in \mathcal{G}(n,c/n)$),
a.a.s. among these vertices a cycle of length $\mu
\varepsilon^2 n$ for some small $\mu > 0$ is created. Since this is a
cycle containing vertices of type $1$ only, this cycle is a rigid block, and
hence $R_n(q,c)/n$ is at least $\mu \varepsilon^2$. Imagine now that
this cycle of type 1 vertices is not included in $H$: 
if one of the vertices of the cycle would be included in $H$ and $n_1(H) \geq 3$, then we could add all
other vertices of the cycle in a path like way, that is, for each vertex of the
cycle not yet present add the vertex together with exactly one
incident edge, and the graph remains minimally rigid (in particular,
if for a vertex both of its neighbors on the cycle are already there,
add just one edge; in particular, the cycle of type $1$ is contained in $H$). Similarly, if one vertex of the cycle would be included in $H$ and $n_1(H)=2$, we could add all vertices of the cycle, including the edge closing the cycle (so that the total number of edges is right). If one vertex of the cycle would be included in $H$ and $n_1(H)=1$, by adding
$\sqrt{n}$ random edges, one would modify $c$ by $O(1/\sqrt{n})$ only,
and a.a.s. induce at least one edge between a vertex of the cycle
of type 1 vertices not yet in $H$ and a vertex of type $2$ already in $H$, and sparsity and minimal rigidity remain true.
So suppose no vertex is included in $H$, which is
then also at least of size $\mu \varepsilon^2 n$. Then by adding
$\sqrt{n}$ random edges, one would modify $c$ by $O(1/\sqrt{n})$ only,
and a.a.s. induce at least three pairwise vertex-disjoint edges between the cycle
of type 1 vertices and $H$. If we had $n_1(H) \geq 3$, keep one of the newly added
edges and remove one edge of the cycle, if $n_1(H)=2$, keep one added edge, if $n_1(H)=1$, keep two edges, and if $n_1(H)=0$, keep three edges, without removing any edge in the last three cases. One can check that in all cases one obtains a minimally rigid block, as the total number of edges is correct and every subgraph is sparse. 
 Hence we may assume that the cycle of type 1 vertices is included in $H$. Since
the cycle of vertices of type $1$ forms part of the $2.5$-core, we
know that the $2.5$-core of $H$ is not empty.  Then by
Lemma~\ref{lemma_core2}, $H \subseteq K_{2.5}^+$. Now, by
Theorem~\ref{theo_core+} and Remark~\ref{rem_core+}, we know that the
size of $K_{2.5}^+/n$ tends to 0 when $c\to \frac{1}{1-q}$, and the
second part of the statement of Theorem~\ref{theo_continuous} follows.

\section{Proof of Theorem~\ref{theo_core}}\label{sec:theo_core}
\begin{proof}
The proof is an easy generalization of~\cite{Janson}, see \cite{lelarge_diff}. We repeat the argument here for the convenience
of the reader.

The 2.5-core of an arbitrary finite graph can be found by removing
vertices of type 1 with degree $<2$ and vertices of type 2 with degree
$<3$, in arbitrary order, until no such vertices exist. Let us call a
vertex of type 1 with degree $<2$ or of type 2 with degree $<3$ a
light vertex and let us call it a heavy vertex otherwise. 
We still obtain the 2.5-core by removing edges where one endpoint is
light.

Regard each edge as consisting of two half-edges, each half-edge having
one endpoint. We say that a half-edge is light or heavy when its
endpoint is. As long as there is any light half-edge, choose one such
half-edge uniformly at random and remove the edge it belongs to. When
there are no light half-edges left, we stop. Then all light vertices
are isolated and the heavy vertices with the remaining edges form the
2.5-core of the original graph.

We apply this algorithm to a random multigraph with given degree
sequence $(d_i)_1^n$ (see \cite{Janson}, Section~2 for a precise definition).
We observe the half-edges but not how they are connected into
edges. At each step, we thus select a light half-edge at random. We
then reveal its partner, which is random and uniformly distributed
over the set of all other half-edges. We then remove these two
half-edges and repeat as long as there is any light half-edge.

We now regard vertices as bins and half-edges as balls. Each bin
inherits the type of its vertex.
In each step, we remove first one random ball from the set of balls in
light bins (i.e., bins of type 1 with $<2$ balls or bins of type $2$
with $<3$ balls) and then a random ball without restriction. We stop
when there are no non-empty light bins and the 2.5-core consists
precisely of the heavy bins at the time we stop.

We thus alternately remove a random light ball and a random ball. We
may just as well say that we first remove a random light ball. We then
remove balls in pairs, first a random ball and then a random light
ball, and stop with the random ball leaving no light ball to remove.

We now run this deletion process in continuous time such that, if
there are $j$ balls remaining, then we wait an exponential time with
mean $1/j$ until the next pair of deletions. In other words, we make
deletions at rate $j$. Let $L(t)$, $H(t)$ denote the numbers of
light and heavy balls at time $t$, respectively; further let $H_1(t)$
and $H_2(t)$ be the number of heavy bins of type 1 and 2, respectively.

Let $\tau$ be the stopping time of this process. As in \cite{Janson},
we first consider the total number of balls. This is a death process
with rate 1 and jumps of size 2, so that by Lemma~4.3 in \cite{Janson},
we have:
\begin{eqnarray*}
\sup_{t\leq \tau} |L(t)+H(t) -2me^{-2t}| =o_p(n).
\end{eqnarray*}

We now concentrate on heavy balls. As shown in~\cite{Janson} (see
Section~6), the same results can be applied if the degree sequence is
not given, but converges in probability. In particular, the degree
sequence of $G(n,c/n)$ is random, and it converges in probability to a
Poisson distribution with mean $c$.  Let $U^1_r(t)$ (resp. $U^2_r(t)$)
be the number of heavy bins of type 1 (resp. 2) with exactly $r$ balls
at time t. Then by Lemma~4.4 in~\cite{Janson}, we get (we use
the fact that $\sum_{k\geq j}ke^{-\lambda}\frac{\lambda^k}{k!} =
\lambda Q(\lambda,j-1)$):
\begin{eqnarray*}
\sup_{t\leq \tau} |\sum_{r\geq 2} rU^1_r(t)/n - (1-q)
ce^{-t}Q(ce^{-t},1)|&=&o_p(1)\\
\sup_{t\leq \tau} |\sum_{r\geq 3} rU^2_r(t)/n - q ce^{-t}Q(ce^{-t},2)|&=&o_p(1)
\end{eqnarray*}

We define
\begin{eqnarray*}
h(x) &=& (1-q) cxQ(cx,1) + qcxQ(cx,2)\\
h_1(x) &=& (1-q)Q(cx,2)\\
h_2(x)&=& qQ(cx,3).
\end{eqnarray*}
Since we have $H(t)=\sum_r rU^1_r(t)+rU^2_r(t)$, we get:
\begin{eqnarray*}
\sup_{t\leq \tau}|H(t)/n - h(e^{-t})|  = o_p(1)\\
\sup_{t\leq \tau}|H_1(t)/n - h_1(e^{-t})|  = o_p(1)\\
\sup_{t\leq \tau}|H_2(t)/n - h_2(e^{-t})|  = o_p(1)
\end{eqnarray*}
Hence we deduce that
\begin{eqnarray*}
\sup_{t\leq \tau} |L(t)/n + h(e^{-t}) - ce^{-2t}| = o_p(1).
\end{eqnarray*}
In case (a), we have $cx^2-h(x)>0$ for all $x>0$, so that as in
\cite{Janson}, we conclude that $\tau\to \infty$ a.a.s.,
$H(\tau)/n=H_1(\tau)/n = H_2(\tau)/n=o_p(1)$, and hence case (a)
follows.  In case (b), again following \cite{Janson}, we have
$\tau\to-\log\left(\tilde{\xi}/c \right)$, and the claim follows.
\end{proof}

\section{Proof of Theorem~\ref{theo_core+}}\label{sec:theo_core+}

To prove Theorem \ref{theo_core+}, we need to prove that for a pair of vertices
$a$ and $b$ chosen uniformly at random, we have:
\begin{eqnarray*}
\Pr{(a \mbox{ in 2.5+1.5-core}) } = (1+o(1))\left(1-e^{-\tilde{\xi}}-q \tilde{\xi} e^{-\tilde{\xi}}\right),
\end{eqnarray*}
and 
\begin{eqnarray}
\label{eq:cheb}\Pr{(a \mbox{ and }b \mbox{ in 2.5+1.5-core}) } \leq (1+o(1))\left(1-e^{-\tilde{\xi}}-q \tilde{\xi} e^{-\tilde{\xi}}\right)^2,
\end{eqnarray}
and the statement follows by Chebyshev's inequality. 

To prove the first statement, we first consider the extended $2.5$-core
$C(a)$ obtained as in the previous section by removing
vertices of type 1 with degree $<2$ and vertices of type 2 with degree
$<3$ except for node $a$, that is, $a$ is considered as always heavy. 
Clearly the resulting graph contains the $2.5$-core.
Note that if we condition the resulting
graph on its degree sequence, it is still a configuration model (see
Theorems 10 and 11 in \cite{lelarge_diff}).

We have:
\begin{itemize}
\item[(i)] if $a$ is of type 1 (resp. 2) and has degree $0$ (resp. $0$
  or $1$) in $C(a)$, then $a$ is not in the $2.5+1.5$-core.
\item[(ii)] if $a$ is of type 1 (resp. 2) and has degree $\geq 2$
  (resp. $\geq 3$) in $C(a)$, then $a$ is in the $2.5$-core.
\item[(iii)] if $a$ is of type 1 (resp. 2) and has degree $1$ (resp. $2$)
  in $C(a)$, then we can remove $a$ and then continue the algorithm by removing
vertices of type 1 with degree $<2$ and vertices of type 2 with degree
$<3$ to get the $2.5$-core.
\end{itemize}
In the case of (iii), if the graph induced by $a$ and the nodes
removed during this second phase is a tree, then it follows that $a$
is part of the $2.5+1.5$-core. Note that as long as the number of
nodes removed during this second phase is $o(n^{1/3})$, the graph
induced by $a$ and the removed nodes is a tree w.h.p.: for each node
the probability to connect to one of the $o(n^{1/3})$ nodes is
$o(n^{-2/3} \log n)$, and by a union bound over all nodes the desired
result follows. 

We clearly have
\begin{eqnarray}
\label{eq:upper}\Pr{(a \mbox{ in 2.5+1.5-core}) } \leq \Pr{(a \mbox{ has degree at least } t(a) \mbox{ in } C(a)) }, 
\end{eqnarray}
where $t(a)\in \{1,2\}$ is the type of $a$.
Hence, we need to compute the probability that $a$ has at least
$1$ neighbor in $C(a)$ in the case of being of type $1$ (at least $2$
neighbors in $C(a)$ in the case of being of type $2$). When changing
only one vertex $a$ to be always heavy, the functions
$h(x),h_1(x),h_2(x)$ change only by an additive $O(\log n/n)$, and
thus $\tilde{c}(q)$ and $\tilde{\xi}(q)$ also change by at most an
additive $o(1)$. Hence, for $c > \tilde{c}(q)$, as in the previous
proof, for the stopping time $\tau$ we still have $\tau \sim
-\log(\tilde{\xi}/c)$. Therefore, we have to compute the probability
that at time $\tau$, $a$ has at least $1$ neighbor in $C(a)$ in the case
of being of type $1$ (at least $2$ in the case of being of type
$2$). Note that since $a$ is heavy, the probability for each halfedge
incident to $a$ to be alive at time $t$ is $e^{-t}$. Since $a$ is
chosen uniformly at random, we have
\begin{eqnarray*}
  &\Pr{(a \mbox{ has degree at least }  t(a) \mbox{ in } C(a))}\\
  =&(1+o(1)) \sum_{k \geq 0} \left(\frac{e^{-c}c^k}{k!} \left( (1-q)(1-(1-e^{-\tau})^k)+q(1-(1-e^{-\tau})^k-ke^{-\tau}(1-e^{-\tau})^{k-1}) \right)\right) \\
  =&(1+o(1))\left(1-e^{-\tilde{\xi}}-q \tilde{\xi} e^{-\tilde{\xi}}\right).
\end{eqnarray*}
We now prove (\ref{eq:cheb}).
Consider two special vertices $a$ and
$b$, chosen uniformly at random, and consider them both heavy. By the
same reasoning as above, the functions $\tilde{c}(q), \tilde{\xi}(q),
h(x), h_1(x), h_2(x)$ change only by additive terms of $o(1).$ 
Again, note that in order for both $a$ and $b$ to be in the
$2.5+1.5$-core, it is necessary but not sufficient for both $a$ and
$b$ to have at time $\tau$ still $1$ incident edge in the case of
being of type $1$ ($2$ incident edges in the case of being of type
$2$).
Hence,
\begin{eqnarray*}
  &\Pr{(a \mbox{ and }b \mbox{ in 2.5+1.5-core}) }\\
  \leq &(1+o(1))\left(1-e^{-\tilde{\xi}}-q \tilde{\xi} e^{-\tilde{\xi}}\right)^2.
\end{eqnarray*}

We now prove that (\ref{eq:upper}) is tight.
First, we compute the degree distribution in $C(a)$.
We use the same notation as in the proof of Theorem \ref{theo_core}.
It follows from Lemma 4.4 in \cite{Janson} (see also Lemma 12 in
\cite{lelarge_diff}) that we have
\begin{eqnarray*}
\frac{U^1_r(t)}{n} &=& (1-q)\frac{\left(c
    e^{-t}\right)^re^{-c e^{-t}}}{r!} +o(1),\quad r\geq 2\\
\frac{U^2_r(t)}{n} &=& q\frac{\left(c
    e^{-t}\right)^re^{-c e^{-t}}}{r!} +o(1),\quad r\geq 3.
\end{eqnarray*}
In particular, at $\tau=-\log\left(\tilde{\xi}/c \right)+o(1)$, the
stopping time of the process, we have
\begin{eqnarray*}
  \frac{U^1_r(\tau)}{n} &=& (1-q)\frac{\tilde{\xi}^re^{-\tilde{\xi}}}{r!} +o(1),\quad r\geq 2\\
  \frac{U^2_r(\tau)}{n} &=& q\frac{\tilde{\xi}^re^{-\tilde{\xi}}}{r!} +o(1),\quad r\geq 3.
\end{eqnarray*}
As computed above, the total number of half-edges is
\[
  \sum_{r \geq 2}  rU^1_r(\tau)+ \sum_{r \geq 3}rU^2_r(\tau) = 
n h(\tilde{\xi}/c)+o(n)
=n\frac{\tilde{\xi}^2}{c}+o(n).
\]
Hence, when choosing a ball uniformly at random among the balls in
  bins corresponding to heavy vertices, the probability to pick a
node of type $1$ with degree $2$ is
\begin{eqnarray*}
  p^1_2 = \frac{2U^1_2(\tau)}{\sum_r rU^1_r(\tau)+rU^2_r(\tau)}=(1-q) e^{-\tilde{\xi}}c+o(1),
\end{eqnarray*}
and similarly, the probability to pick a node of type $2$ with degree
$3$ is
\begin{eqnarray*}
  p^2_3 = \frac{3U^2_3(\tau)}{\sum_r rU^1_r(\tau)+rU^2_r(\tau)}= q \frac{\tilde{\xi}c}{2}e^{-\tilde{\xi}}+o(1).
\end{eqnarray*}

Consider a new node of type $t\in\{1,2\}$, picked up during the second
phase of the algorithm in case (iii); as long as the neighborhood of
$a$ explored in this second phase is a tree, this new node has at
least $t+1$ half-edges, since it is heavy. If it has exactly $t+1$
half-edges, it becomes light after removal of one half-edge and the
algorithm continues: a type $1$ node then induces one more half-edge
to remove, and a type $2$ node two more half-edges. So to show that
this branching exploration process is $o(n^{1/3})$, it suffices to
prove that it is subcritical, that is,
\begin{eqnarray*}
2p^2_3+p^1_2<1,
\end{eqnarray*}
and this will imply that (\ref{eq:upper}) is indeed tight.
We have
\begin{eqnarray*}
2p^2_3+p^1_2 =q \tilde{\xi}ce^{-\tilde{\xi}} +(1-q) e^{-\tilde{\xi}}c+o(1),
\end{eqnarray*}
and we will now show that this is indeed less than $1$ for $c >
  \tilde{c}(q)$. 
Recall that for $c>\tilde{c}(q)$ (the case of interest here), 
$\tilde{\xi}(c,q)>0$ is the largest solution of the equation
\begin{equation}
c=\psi(\xi;q) \quad \mbox{with} \quad \psi(\xi;q)=\frac{\xi}{1-e^{-\xi}-q\xi 
e^{-\xi}}.
\label{eq:xitilde}
\end{equation}
Then, necessarily at the point $(\tilde{\xi}(c,q),q)$:
\begin{equation}\label{eq:derivative}
\frac{\partial \psi}{\partial \xi}=\frac{e^\xi (-q\xi^2-\xi+e^{\xi}-1)}{(q\xi-e^{\xi}+1)^2}\geq 0,
\end{equation}
as otherwise \eqref{eq:xitilde} would have a larger solution (note that $\psi(\xi;q) \to \infty$ as $\xi \to \infty$).
Furthermore, if $q\leq 1/2$, $\partial \psi/\partial \xi$ has no
strictly positive root, and in this case, for $c>\tilde{c}(q)$, $\frac{\partial
  \psi}{\partial \xi}>0$ at the point $(\tilde{\xi}(c,q),q)$.  Now, if
$q>1/2$, note that $\partial \psi/\partial \xi$ has a single strictly positive root:  indeed, for $\xi > 0$, $\partial \psi/\partial \xi (\xi)=0$ iff $g(\xi):=e^{\xi}-1-q\xi^2-\xi=0$. Now, $g'(\xi)=e^{\xi}-1-2q\xi$ and $g''(\xi)=e^{\xi}-2q$, and hence $g(\xi)$ is convex for $\xi \ge \log(2q) > 0$ and concave otherwise. Also, $g(0)=g'(0)=0$, and since $g''(0) < 0$, the function is first decreasing, and then increasing with $g(\xi) \to \infty$ as $\xi \to \infty$, therefore passing exactly once by $0$, giving the single strictly positive root.
This root is a minimum of $\psi$ and equal to
$\tilde{\xi}(\tilde{c}(q),q)$: recall the definition of $\tilde{c}(q)$
\[
\tilde{c}(q)=\inf_{\xi > 0} \psi(\xi;q).
\]
Hence for $c>\tilde{c}(q)$, $\partial \psi/\partial \xi$ cannot vanish at the
point $(\tilde{\xi}(c,q),q)$, thus it is strictly positive. 
Using \eqref{eq:derivative}, this is equivalent to 
\[
e^{-\tilde{\xi}}\left(1+\tilde{\xi}+q\tilde{\xi}^2\right)<1 
\]
and therefore
\[  
\tilde{\xi} \left(q e^{-\tilde{\xi}}\tilde{\xi}+(1-q)e^{-\tilde{\xi}} \right)
<(1-q)(1-e^{-\tilde{\xi}})+q(1-e^{-\tilde{\xi}}-\tilde{\xi}e^{-\tilde{\xi}}).
\]
Thus, the expression $2p^2_3+p^1_2$ at $c>\tilde{c}(q)$ is
\[
\frac{\tilde{\xi}
  e^{-\tilde{\xi}}\left(q\tilde{\xi}+(1-q)\right)}{(1-q)(1-e^{-\tilde{\xi}})+q(1-e^{-\tilde{\xi}}-\tilde{\xi}e^{-\tilde{\xi}})}<1,
\]
 as desired.
  
The result follows.

\section{Proof of Lemma~\ref{lemma_important}}\label{sec:lemma_important}

In order to prove Lemma~\ref{lemma_important}, we first need to prove the following auxiliary lemma:
\begin{lemma}\label{lem:lagrange}
  Let $\alpha \leq 1$. Let $r,s, t=\alpha s \in \mathbb{N}$ and let
  $s_1,\ldots,s_r \in \mathbb{N}$ such that $\sum_{i=1}^r s_i=\alpha
  s$ and $\sum_{i=1}^r i s_i=s$. There exists $C > 0$ such that for
  any $r > 0$, and any $\alpha$ we have
$$
\prod_{i=1}^r \frac{1}{s_i^{s_i}} \leq C^{\alpha s}/(\alpha^2 s)^{\alpha s}.
$$
\end{lemma}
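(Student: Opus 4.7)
The plan is to take logarithms and translate the claim into a lower bound on $\sum_{i=1}^{r} s_i \log s_i$. Setting $t = \alpha s$, the inequality to prove is equivalent to
\begin{eqnarray*}
\sum_{i=1}^{r} s_i \log s_i \geq t \log(\alpha^2 s) - t \log C,
\end{eqnarray*}
so it suffices to bound the left-hand side uniformly from below over all feasible tuples.

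I would then relax $s_i$ to nonnegative reals (the integer feasible set is contained in the real one, so any lower bound on the real minimum transfers) and introduce probability weights $p_i = s_i/t$, making $(p_i)_{i \geq 1}$ a law on $\mathbb{N}_{\geq 1}$ with mean $\sum_i i\,p_i = s/t = 1/\alpha$. Since $\sum_i s_i \log s_i = t \log t - t H(p)$ with $H(p) = -\sum_i p_i \log p_i$, the task reduces to an upper bound on the Shannon entropy $H(p)$. By the classical maximum-entropy principle, the unique maximizer on $\mathbb{N}_{\geq 1}$ with prescribed mean $1/\alpha$ is the geometric law $p^*_i = \alpha(1-\alpha)^{i-1}$, and a direct computation gives
\begin{eqnarray*}
H(p^*) = -\log \alpha - \frac{1-\alpha}{\alpha}\log(1-\alpha).
\end{eqnarray*}
The cleanest justification that $H(p) \leq H(p^*)$ is via the nonnegativity of the Kullback--Leibler divergence $D(p \, \| \, p^*) \geq 0$, whose expansion uses only that $p$ and $p^*$ share the same mean. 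Substituting back yields
\begin{eqnarray*}
\sum_{i=1}^{r} s_i \log s_i \geq t \log(\alpha^2 s) + s(1-\alpha)\log(1-\alpha).
\end{eqnarray*}

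To finish, I would invoke the elementary inequality $(1-\alpha)|\log(1-\alpha)| \leq \alpha$, valid for $\alpha \in (0,1]$, which is just the standard bound $\log u \geq 1 - 1/u$ applied at $u = 1-\alpha$. This gives $s(1-\alpha)\log(1-\alpha) \geq -\alpha s = -t$, hence $\sum_i s_i \log s_i \geq t \log(\alpha^2 s / e)$, and the lemma follows with $C = e$. The only step demanding genuine care is the maximum-entropy inequality; the boundary cases $\alpha = 0$ and $\alpha = 1$ are trivial and can be handled by direct inspection.
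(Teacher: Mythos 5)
Your proof is correct, and it takes a genuinely different route from the paper's. Both approaches recognize that the extremal configuration has a geometric form $s_i \propto b^i$, but they get there and handle the consequences quite differently. The paper finds the form by Lagrange multipliers, then splits into five cases (small $1/\alpha$, small $r$, $b\ge 1$, $b\le 1/2$, $1/2<b<1$) and estimates each one separately by hand; this is elementary but requires a fair amount of bookkeeping, including the normalization identities relating $a$, $b$, $r$ and $\alpha$. You instead recast the problem as a maximum-entropy problem for a probability law on $\{1,2,\dots\}$ with prescribed mean $1/\alpha$, use the KL-divergence argument to identify the geometric law as the entropy maximizer, evaluate its entropy in closed form, and finish with the one-line inequality $(1-\alpha)\lvert\log(1-\alpha)\rvert\le\alpha$. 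This eliminates the case analysis entirely (the geometric on all of $\mathbb{N}_{\ge 1}$ dominates any law supported on $\{1,\dots,r\}$, so the parameter $b$ and the finiteness of the support never need to be discussed) and yields the explicit constant $C=e$. One small remark: the relaxation to nonnegative reals that you mention is unnecessary, since the KL argument applies verbatim to any probability vector on $\{1,\dots,r\}$, integer-valued or not; likewise the boundary cases $\alpha\in\{0,1\}$ are handled exactly as you note. Overall your approach is cleaner and more conceptual than the paper's, at the cost of invoking the maximum-entropy characterization of the geometric distribution rather than remaining purely calculus-based.
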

\begin{proof}
It is sufficient to prove
\[
\sum_{i=1}^r s_i\log s_i \geq \alpha s\log\alpha s +\alpha s \log
\alpha - \alpha s \log C
\]
We first normalize $s_i$. Writing $\tilde{s}_i=s_i/(\alpha s)$, the
constraints are
\begin{eqnarray*}
\sum_{i=1}^r \tilde{s}_i =1 &;& \sum_{i=1}^r i\tilde{s}_i = \frac{1}{\alpha}\\
\end{eqnarray*}
We will show that we can find $C > 0$ independent of $r$ and $\alpha$ such that
\begin{equation}
\sum_{i=1}^r \tilde{s}_i\log \tilde{s}_i \geq  \log \alpha - \log C
\label{eq:toprove}
\end{equation}
To simplify notation, we write simply $s_i$ from now on.

For a fixed $r$, we maximize $\sum_{i=1}^r - s_i \log s_i$ subject to
$\sum_{i=1}^r s_i=1$ and $\sum_{i=1}^r i s_i=1/\alpha$. Applying
Lagrange multipliers gives the system of equations
$$
\log s_i + 1+\lambda_1+i \lambda_2=0, \hspace{2cm} i=1,\ldots,r,
$$
and thus an optimal solution has to satisfy $s_i=a b^i$ for some $a,b
\in \mathbb{R}$.  It is enough to show
\eqref{eq:toprove} for all $s_i$ of this form.\\
The two constraints translate into the two following equations,
repeatedly used in the following:
\begin{eqnarray*}
 a&=& \frac{1-b}{b(1-b^r)} \\
 \frac{1-(r+1)b^r+rb^{r+1}}{(1-b)(1-b^r)} &=& \frac{1}{\alpha}
\end{eqnarray*}
We distinguish five cases.\\
{\bf Case 1:} there exists $K > 0$ such that $1/\alpha\geq r/K$.\\
In this case, we may ignore the constraint $\sum_{i=1}^r i
s_i=1/\alpha$. Clearly, $\sum_{i=1}^r s_i\log s_i$ is minimized by the
uniform distribution $s_i=1/r$.  Hence
\[
\sum_{i=1}^r s_i\log s_i \geq -\log r  \geq \log \alpha -\log K, 
\]
and we are done.\\
{\bf Case 2:} $r<R_0$, where $R_0$ is some large enough constant such that $(r-1)(1/2)^{r+1}<0.5$ for all $r\geq R_0$. \\
Reasoning as in the previous case, we obtain
\[
\sum_{i=1}^r s_i\log s_i \geq -\log R_0,
\]
and we are done as well.\\
{\bf Case 3:} $b\geq 1$.\\
Note that we always have 
\[
\sum_{i=\lfloor r/2\rfloor }^r s_i \leq \frac{\sum_{i=1}^r is_i}{\lfloor r/2\rfloor}.
\]
If $b\geq 1$, $s_i$ is increasing, thus the left hand side is larger
than $1/2$. Hence $(1/\alpha)=\sum_{i=1}^r is_i \geq \lfloor
r/2\rfloor/2 \geq r/5$. We may choose $K=5$ and
apply Case 1.\\
{\bf Case 4:} $b\leq 1/2$, and $r\geq R_0$.\\
We have
\[
a=\frac{1}{b(1+b+\ldots+b^{r-1})} =  \frac{1}{b(1+c(b) b)} 
\]
with $1\leq c(b)\leq 2$. Using $\sum_{i=1}^r s_i\log s_i=\sum_{i=1}^r
s_i (\log a + i \log b)=\log a + \frac{1}{\alpha} \log b$ we obtain
\begin{equation}
  \sum_{i=1}^r s_i\log s_i= \log a +\frac{1}{\alpha} \log b = (1/\alpha -1)\log b -\log (1+c(b) b) \geq (\frac{1}{\alpha} -1)\log b -\log 2.
\label{eq:step5}
\end{equation}
Using that $(1-b)(1-b^r) \geq \frac14$, we get
\begin{eqnarray}
  \frac{1}{\alpha} &=& \frac{1-(r+1)b^r+rb^{r+1}}{(1-b)(1-b^r)} \nonumber \\
  \frac{1}{\alpha}-1 &=& \frac{b-rb^r+(r-1)b^{r+1}}{(1-b)(1-b^r)} \nonumber\\ 
  &\leq & 4\left( b+ (r-1)b^{r+1}\right) \nonumber\\
  &\leq& 4b(1+0.5) =6b\nonumber, 
\end{eqnarray}
where we used that for $R \geq R_0$, we have $(r-1)b^{r+1}\leq
(r-1)(1/2)^{r+1} \leq 0.5$.  Thus,
$$
 \left(\frac{1}{\alpha}-1\right) \log b \geq 6 b\log b \geq C \nonumber,
$$
since $6b\log b$ is for $b\in [0,~1/2]$ bounded below by an absolute
constant.
Combining this last estimate with \eqref{eq:step5}, the desired inequality holds.\\
{\bf Case 5:} $1/2 \leq b<1$ and $1/\alpha\leq r/K$.\\
Using $\log a = \log(1-b)-\log b -\log (1-b^r) \geq \log(1-b)$, we
obtain
\[
\sum_{i=1}^r s_i\log s_i = \log a +\frac{1}{\alpha}\log b \geq \log
(1-b) +\frac{1}{\alpha}\log b.
\]
We will show that $\log b/\alpha$ is bounded from below by some
constant, and that $\log(1-b)\geq \log\alpha +C$.  First note that we
have the following relation:
\begin{equation}
\frac{1-b}{\alpha} = 1-r\frac{b^r(1-b)}{1-b^r} 
\label{eq:1mb}
\end{equation}
Consider first $\log b/\alpha$. If $\alpha \geq 1/2$, then $\log
b/\alpha\geq 2\log b\geq -2\log 2$, and we have the desired
bound. Assume then $\alpha < 1/2$.  From \eqref{eq:1mb}, we have
$(1-b)/\alpha \leq 1$, and thus, $\log b\geq \log (1-\alpha)$. Also,
since $\alpha<1/2$, $\log(1-\alpha)\geq -2\alpha$.  Hence,
\[
\frac{\log b}{\alpha}\geq -2,
\]
and we are done with this term. Let us turn to the $\log(1-b)$
term. For any $i$, we have $s_i\geq ab^r$, and hence
\[
\frac{r}{K}\geq \frac{1}{\alpha}=\sum_{i=1}^r is_i \geq ab^r \frac{r(r+1)}{2}
\]
Thus 
\[
\frac{2b}{K}\geq (r+1) \frac{b^r(1-b)}{1-b^r},
\]
and since $br/(r+1) \leq 1$, also
\[
\frac{2}{K}\geq r \frac{b^r(1-b)}{1-b^r}
\]
Inserting this into \eqref{eq:1mb}, we have
\[
\frac{1-b}{\alpha} \geq 1-\frac{2}{K}
\]

Thus $\log(1-b)\geq \log\alpha +C$, and we are done.
\end{proof}

We need the following two lemmas.
\begin{lemma}\label{nocycle}\cite{Bollobas}[Corollary 5.8]\\
  Let $G \in \mathcal{G}(n,p)$ with $p=c/n$ and $0 < c < 1$. The
  following holds a.a.s.:
\begin{itemize}
\item $G$ contains only trees and unicyclic components.
\item The number of vertices in unicyclic components is at most
  $\omega_n$, for some arbitrarily slowly growing function $\omega_n$.
\end{itemize}
\end{lemma}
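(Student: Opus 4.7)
The plan is to prove both parts by the first-moment method, standard for structural results on subcritical Erd\H{o}s--R\'enyi random graphs.

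For the first claim, let $Z_k$ denote the number of subsets $S \subseteq V$ with $|S|=k$ and $e(G[S])\geq k+1$; a component containing more than one cycle exists iff $Z_k\geq 1$ for some $k\geq 4$. I would show $\sum_{k\geq 4}\mathbb{E}[Z_k]=o(1)$ and apply Markov's inequality. Since $c<1$, the edge-count tail is geometric (the ratio of consecutive binomial terms is bounded by $ck/(2(\ell+1))<1$ for $\ell\geq k+1$), so
\[
\mathbb{E}[Z_k]\leq (1+o(1))\binom{n}{k}\binom{\binom{k}{2}}{k+1}p^{k+1}.
\]
For small $k$, a direct Stirling estimate suffices; for moderate $k$, a sharper Wright-type count of connected graphs with excess one (of order $k^{k+1/2}$ rather than the naive $(k^2)^{k+1}$) keeps the geometric series convergent; for $k\geq C\log n$, the classical subcritical component-size bound (largest component is $O(\log n)$ a.a.s., via comparison to a $\mathrm{Poi}(c)$ Galton--Watson tree) rules out such subgraphs outright.

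For the second claim, let $X$ count vertices in unicyclic components and $C_k$ the unicyclic components of size exactly $k$. Decomposing a connected unicyclic graph on $k$ labeled vertices into a cycle of length $j\in\{3,\dots,k\}$ with a rooted forest attached (whose roots are the cycle vertices) and using Cayley's rooted-forest formula gives $U_k=\tfrac12\sum_{j=3}^k\binom{k}{j}j!\,k^{k-j-1}$. A short telescoping yields the identity $\frac{kU_k}{k!}=\tfrac12\sum_{i=0}^{k-3}\frac{k^i}{i!}\leq \frac{e^k}{2}$. Writing
\[
\mathbb{E}[X]=\sum_{k\geq 3}k\binom{n}{k}U_k p^k(1-p)^{\binom{k}{2}-k+k(n-k)}
\]
and using the sharp entropy bound $\binom{n}{k}\leq e^{nH(k/n)}$ (with $H$ the binary entropy) together with $(1-p)^{\binom{k}{2}-k+k(n-k)}\leq e^{-ck(1-k/(2n))+O(k/n)}$, each summand is bounded by $\exp\!\bigl(nF(k/n,c)\bigr)\cdot\mathrm{poly}(n)$, where
\[
F(\alpha,c)=-(1-\alpha)\ln(1-\alpha)+\alpha\ln c-c\alpha+c\alpha^2/2.
\]
Since $F(0,c)=0$, $\partial_\alpha F(0,c)=1+\ln c-c<0$ for $c<1$, and $F$ is strictly concave in $\alpha$, one has $F(\alpha,c)<0$ on $(0,1]$. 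For $k=o(n)$ the bound reduces to $(ce^{1-c})^k/2$, which is geometrically summable since $c\mapsto ce^{1-c}$ is strictly increasing on $(0,1]$ with value $1$ at $c=1$, hence $ce^{1-c}<1$; the contribution from $k=\Theta(n)$ is exponentially small by the entropy computation. Thus $\mathbb{E}[X]=O(1)$, and Markov's inequality gives $\mathbb{P}(X\geq\omega_n)\leq \mathbb{E}[X]/\omega_n\to 0$ for any $\omega_n\to\infty$.

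The main obstacle is making the first-moment bounds work uniformly for all $c\in(0,1)$: naive entropy estimates of $\binom{n}{k}\binom{\binom{k}{2}}{k+1}$ produce geometric series whose base exceeds $1$ well below the percolation threshold. The remedy is two-fold --- exploit the Cayley/Wright decompositions to exhibit the correct effective base $ce^{1-c}$ (strictly less than $1$ on $(0,1)$), and combine this with the subcritical cluster-size bound to handle the moderate-to-large $k$ regime.
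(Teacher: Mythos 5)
This is a citation (Bollob\'as, Corollary 5.8), not a result the paper proves, so your proposal is being judged on its own. Your treatment of the \emph{second} part is correct and self-contained: the cycle-plus-rooted-forest decomposition gives exactly $U_k=\tfrac12\sum_{j=3}^k\binom kj j!\,k^{k-j-1}$, the identity $kU_k/k!=\tfrac12\sum_{i=0}^{k-3}k^i/i!\le e^k/2$ is right (it checks out for $k=3,4$), and because you carry the isolation factor $(1-p)^{k(n-k)}\approx e^{-ck}$ the effective geometric base is $ce^{1-c}<1$; the entropy function $F(\alpha,c)$, its derivative at $0$ equal to $1+\ln c-c<0$, and concavity are all computed correctly, so $\mathbb E[X]=O(1)$ and Markov finishes it. (Minor slip: the ratio of consecutive binomial tail terms is $\binom k2 p/(\ell+1)\approx ck^2/(2n(\ell+1))$, not $ck/(2(\ell+1))$; a missing $1/n$, but the tail-domination conclusion survives.)

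The \emph{first} part has a genuine gap. You define $Z_k$ as the number of vertex subsets with $\ge k+1$ induced edges and propose to control $\mathbb E[Z_k]$ via a Wright-type count of connected graphs with excess $1$. Without the isolation factor, the Wright count gives, for connected $k$-vertex subgraphs with $k{+}1$ edges,
\[
\binom nk\,C(k,k{+}1)\,p^{k+1}\;\sim\;\frac{n^k}{k!}\,\sqrt{\tfrac\pi8}\,k^{k+1/2}\Bigl(\frac cn\Bigr)^{k+1}\;\sim\;\frac{c}{4n}\,(ce)^k,
\]
whose geometric base is $ce$, not $ce^{1-c}$. For $c>1/e$ this exceeds $1$, and restricting to $k\le C\log n$ (with $C$ the subcritical cluster-size constant, which blows up as $c\to1^-$) does not save you: summing up to $C\log n$ gives $n^{C\ln(ce)-1}\to\infty$. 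This is not just a lossy estimate --- the first moment of the number of connected excess-one subgraphs of logarithmic size really does diverge for $c$ near $1$; the count is $0$ a.a.s.\ but heavy-tailed, so plain first-moment on that quantity fails. You in fact say the right thing in your closing paragraph (\emph{``exhibit the correct effective base $ce^{1-c}$''}), but the $Z_k$ you set up cannot produce $ce^{1-c}$ because there is no $e^{-ck}$ anywhere. The fix is one of two standard moves: either (i) count \emph{components} rather than subsets, i.e.\ restore the factor $(1-p)^{k(n-k)}$ exactly as you do for $X$, after which the same $ce^{1-c}<1$ base handles all $k$ up to $O(\log n)$; or (ii) forget Wright and count \emph{bicyclic kernels} directly: every excess-$\ge1$ component contains a theta, dumbbell, or figure-eight subgraph, and the expected number of these, summed over the three path lengths $a,b,c\ge1$, is $\sum_{a,b,c} O\bigl(n^{a+b+c-1}p^{a+b+c}\bigr)=O\bigl(\tfrac1n\sum c^{a+b+c}\bigr)=O(1/n)$ with \emph{no} isolation factor needed, for every fixed $c<1$. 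Either repair makes part~1 go through and is compatible with the rest of your argument.
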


The following lemma can easily be derived from Corollary 5.11 and
Theorem 5.5 of~\cite{Bollobas}, by extending it to non-constant values of
$k$:
\begin{lemma}\label{treesizes}
  Let $G \in \mathcal{G}(n,p)$ with $p=c/n$ and $0 < c < 1$.  \ Let
  $T_k$ be the number of trees of size $k$ in $G$. Let $\omega_n$ be
  an arbitrarily slowly growing function with $n$. The following holds
  a.a.s.:
\begin{itemize}
\item $T_k=0$ for $k =\omega(\log n)$. 
\item  $T_1=ne^{-c}(1+o(1))$.
\item For any $2 \leq k =O(\log n)$ with
  $\frac{nk^{k-2}}{k!}c^{k-1}e^{-kc}\geq \omega_n$, $T_k=
  \frac{nk^{k-2}}{k!}c^{k-1}e^{-kc}(1+o(1))$.
\end{itemize}
\end{lemma}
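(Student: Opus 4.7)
The plan is a standard first/second moment computation for $T_k$, with some care to let $k$ grow with $n$. Cayley's formula gives $k^{k-2}$ labelled trees on $k$ vertices, and a fixed labelled tree on a fixed $k$-subset $S\subseteq V(G)$ appears as a component of $G$ iff its $k-1$ edges are present, the other $\binom{k}{2}-(k-1)$ potential edges inside $S$ are absent, and the $k(n-k)$ potential edges between $S$ and $V\setminus S$ are all absent. Hence
\[
\mathbb{E}[T_k] \;=\; \binom{n}{k}\, k^{k-2}\, p^{k-1}\, (1-p)^{\binom{k}{2}-(k-1)+k(n-k)}.
\]
For $p=c/n$ and $k=O(\log n)$ one has $\binom{n}{k}=\frac{n^k}{k!}(1+o(1))$, $(1-p)^{\binom{k}{2}-(k-1)}=1+o(1)$, and $(1-p)^{k(n-k)}=e^{-ck}(1+o(1))$, so
\[
\mathbb{E}[T_k] \;=\; \frac{n\,k^{k-2}}{k!}\, c^{k-1}\, e^{-ck}\,(1+o(1)).
\]
The first assertion ($T_1 = ne^{-c}(1+o(1))$ a.a.s.)\ is then a direct Chebyshev estimate for the number of isolated vertices, which is classical.

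For the third assertion, I would apply the second moment method. A routine count, analogous to the one for $\mathbb{E}[T_k]$, gives
\[
\mathbb{E}[T_k(T_k-1)] \;=\; \binom{n}{k}\binom{n-k}{k}\,(k^{k-2})^2\, p^{2(k-1)}\, (1-p)^{2\bigl[\binom{k}{2}-(k-1)\bigr]+k^2+2k(n-2k)},
\]
and comparing with $(\mathbb{E}[T_k])^2$ one finds
\[
\frac{\mathbb{E}[T_k(T_k-1)]}{(\mathbb{E}[T_k])^2} \;=\; \frac{\binom{n-k}{k}}{\binom{n}{k}}\,(1-p)^{-k^2} \;=\; 1+O(k^2/n).
\]
Since $k=O(\log n)$, this ratio is $1+o(1)$, hence $\mathrm{Var}(T_k) = o((\mathbb{E}[T_k])^2)+\mathbb{E}[T_k]$; when $\mathbb{E}[T_k]\ge \omega_n\to\infty$ the additive $\mathbb{E}[T_k]$ is also negligible compared with $(\mathbb{E}[T_k])^2$, so Chebyshev yields $T_k = \mathbb{E}[T_k](1+o(1))$ with probability $1-o(1)$ for each such $k$. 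To get the statement simultaneously for all admissible $k\in\{2,\dots,O(\log n)\}$, I would take a union bound over the $O(\log n)$ values of $k$; since Chebyshev's bound can be made $O(1/\omega_n)$ by choosing $\omega_n$ slightly larger at the cost of losing only a $\log n$ factor, the union bound still tends to $0$.

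For the first assertion, I would use Markov's inequality. Stirling gives $k^{k-2}/k! \sim e^{k}/(\sqrt{2\pi}\,k^{5/2})$, so
\[
\mathbb{E}[T_k] \;=\; \Theta\!\left(\frac{n}{k^{5/2}}\bigl(c\,e^{1-c}\bigr)^{k}\right).
\]
For $0<c<1$ we have $c\,e^{1-c}<1$ (the function $x\mapsto xe^{1-x}$ attains its maximum $1$ only at $x=1$), so there exists $\gamma=\gamma(c)>0$ with $ce^{1-c}=e^{-\gamma}$. Hence $\mathbb{E}[T_k]\le n e^{-\gamma k}$, and for $k=\omega(\log n)$ one has $\mathbb{E}[T_k]=o(n^{-\eta})$ for any fixed $\eta$. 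Summing the geometric tail, $\sum_{k=\omega(\log n)}^{n}\mathbb{E}[T_k]=o(1)$, and Markov gives $\mathbb{P}(\sum_{k=\omega(\log n)} T_k >0)=o(1)$, as required.

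The main technical point, and the only place where non-triviality enters beyond what is in Bollob\'as for fixed $k$, is ensuring that the approximations $\binom{n}{k}=\frac{n^k}{k!}(1+o(1))$, $(1-p)^{\binom{k}{2}-(k-1)}=1+o(1)$, and the variance ratio $1+O(k^2/n)$ all hold uniformly over the range $k=O(\log n)$; this is what forces the $O(\log n)$ cap in the statement, since beyond that the corrections to $\mathbb{E}[T_k]$ are no longer $1+o(1)$. Once uniformity on $k=O(\log n)$ is checked, the three bullets follow immediately from the two moment computations above.
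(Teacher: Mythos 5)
Your overall plan is the right one and is exactly the moment-method argument that underlies the cited results from Bollob\'as (the paper itself gives no proof but just points there). The first-moment and second-moment identities you write down, the asymptotics you extract for $k=O(\log n)$, the Chebyshev step for the second and third bullets, and the Markov/geometric-tail bound for the first bullet are all correct.

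The one place that is not airtight is the step where you pass from concentration for a single $k$ to concentration ``simultaneously for all admissible $k$.'' Your proposed fix --- ``choosing $\omega_n$ slightly larger at the cost of a $\log n$ factor'' --- does not prove the lemma as stated: $\omega_n$ is given and may grow slower than $\log n$ (e.g.\ $\omega_n=\log\log n$), and replacing $\omega_n$ by $\omega_n\log n$ changes the statement. The clean repair is to observe that the sum you are union-bounding is effectively geometric: since $\mathrm{Var}(T_k)=O(\mathbb{E}[T_k])$ and $\mathbb{E}[T_{k+1}]/\mathbb{E}[T_k]\to ce^{1-c}<1$, one has
\[
\sum_{k\,:\,\mathbb{E}[T_k]\ge \omega_n}\frac{\mathrm{Var}(T_k)}{\bigl(\mathbb{E}[T_k]\bigr)^2}
=O\!\left(\sum_{k\,:\,\mathbb{E}[T_k]\ge\omega_n}\frac{1}{\mathbb{E}[T_k]}\right)=O\!\left(\frac{1}{\omega_n}\right),
\]
because the reciprocals $1/\mathbb{E}[T_k]$ grow geometrically and the last admissible summand is $O(1/\omega_n)$. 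Taking $\delta_n=\omega_n^{-1/3}$ in Chebyshev then gives a union-bounded failure probability $O(\omega_n^{-1/3})\to 0$ with $\delta_n\to 0$, yielding the uniform statement without any loss of a $\log n$ factor. With that one-line change your proof is complete; everything else matches the intended argument.
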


We need one more lemma, in the spirit of Theorem 5.10
of~\cite{Bollobas}.
\begin{lemma}\label{logntrees}
  Let $G \in \mathcal{G}(n,p)$ with $p=c/n$ and $0 < c < 1$.  Let
  $\eta > 0$ be a sufficiently small constant.  Let $\omega_n$ be any
  function growing with $n$ arbitrarily slowly, and let $k_0 :=
  \lfloor \nu \log n \rfloor$ be the smallest integer $k$ satisfying
  $\eta \omega_n \leq \mathbb{E}(T_k)=
  \frac{nk^{k-2}}{k!}c^{k-1}e^{-kc} \leq \omega_n$ (since $0 < c < 1$,
  such an integer must exist for sufficiently small $\eta$). Then there 
exists a constant $C>0$ such that a.a.s. $\sum_{k \geq
    k_0} T_k \leq C \mathbb{E}(T_{k_0})$. 
\end{lemma}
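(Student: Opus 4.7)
The plan is a direct second-moment argument. \textbf{Step 1 (expectation).} Stirling's formula yields
\begin{equation*}
\mathbb{E}[T_k] \;=\; \frac{n k^{k-2}}{k!}\, c^{k-1} e^{-kc} \;\sim\; \frac{n}{c\sqrt{2\pi}}\cdot\frac{\lambda^k}{k^{5/2}},\qquad \lambda := ce^{1-c}.
\end{equation*}
The standard inequality $\lambda<1$ for $0<c<1$ (a consequence of $f(c)=ce^{1-c}$ being maximized at $c=1$ with $f(1)=1$) shows that $\mathbb{E}[T_k]/\mathbb{E}[T_{k_0}]$ decays like $\lambda^{k-k_0}$ up to a polynomial factor, so summing the resulting geometric series gives $\mathbb{E}[S]:=\sum_{k\ge k_0}\mathbb{E}[T_k] \le C_1\,\mathbb{E}[T_{k_0}]$ for some constant $C_1=C_1(c)$.

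\textbf{Step 2 (variance).} By the first bullet of Lemma~\ref{treesizes}, a.a.s.\ I may restrict the sum to $k_0\le k\le K\log n$ for some constant $K$. The central identity is that, for labeled trees $T,T'$ on \emph{disjoint} vertex sets of sizes $k$ and $j$,
\begin{equation*}
\Pr(T \text{ and } T' \text{ are both isolated tree components}) \;=\; \Pr(T \text{ iso})\cdot\Pr(T' \text{ iso})\cdot (1-p)^{-kj},
\end{equation*}
because the joint event requires $kj$ fewer non-edges (those between $V(T)$ and $V(T')$) to be absent than the independent product. Combining this with the elementary asymptotics $\binom{n-k}{j}/\binom{n}{j}=\exp(-kj/n+O(\log^3 n/n^2))$ yields
\begin{equation*}
\frac{\mathbb{E}[T_k T_j]-\delta_{kj}\mathbb{E}[T_k]}{\mathbb{E}[T_k]\mathbb{E}[T_j]}\;=\;\exp\!\left((c-1)\tfrac{kj}{n}+O\!\left(\tfrac{\log^3 n}{n^2}\right)\right)\;\le\; 1+O\!\left(\tfrac{\log^3 n}{n^2}\right),
\end{equation*}
where the subcritical assumption $c<1$ is crucial to make the leading exponent non-positive. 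Summing the resulting bounds on $\mathrm{Var}(T_k)$ and $\mathrm{Cov}(T_k,T_j)$ over $k_0\le k,j\le K\log n$ produces
\begin{equation*}
\mathrm{Var}(S) \;\le\; \mathbb{E}[S] + O(\log^3 n/n^2)\cdot\mathbb{E}[S]^2.
\end{equation*}

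\textbf{Step 3 (Chebyshev).} Setting $C:=2C_1$ and using $\mathbb{E}[T_{k_0}]\ge \eta\omega_n\to\infty$ together with $\mathbb{E}[S]\le C_1\mathbb{E}[T_{k_0}]$, Chebyshev's inequality gives
\begin{equation*}
\Pr(S\ge C\,\mathbb{E}[T_{k_0}])\;\le\;\frac{\mathrm{Var}(S)}{C_1^2\,\mathbb{E}[T_{k_0}]^2}\;=\;O(1/\omega_n)+O(\log^3 n/n^2)\;=\;o(1),
\end{equation*}
which completes the proof. The main technical obstacle I anticipate is the second-moment estimate: it rests on a delicate cancellation between the negative correlation $\binom{n-k}{j}/\binom{n}{j}\approx e^{-kj/n}$ coming from the disjoint-vertex-set constraint and the positive correlation $(1-p)^{-kj}\approx e^{ckj/n}$ coming from the non-edges between the two trees that are double-counted in $\Pr(T \text{ iso})\Pr(T' \text{ iso})$. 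It is precisely the subcritical hypothesis $c<1$ that ensures the two exponentials combine to a factor that is non-positive to leading order, and hence that the $T_k$ are essentially non-positively correlated in the relevant range $k,j=O(\log n)$.
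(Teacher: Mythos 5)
Your proof is correct and follows essentially the same strategy as the paper's: a geometric-series bound giving $\sum_{k\ge k_0}\mathbb{E}[T_k]\le C_1\,\mathbb{E}[T_{k_0}]$, a second-moment bound showing that indicator variables for disjoint trees of size $O(\log n)$ are correlated only up to a $(1+o(1))$ factor, and Chebyshev together with $\mathbb{E}[T_{k_0}]\ge \eta\omega_n\to\infty$. Your Step~2 is more explicit than the paper's terse treatment — you isolate the competing factors $\binom{n-k}{j}/\binom{n}{j}\approx e^{-kj/n}$ and $(1-p)^{-kj}\approx e^{ckj/n}$ and invoke $c<1$ to make their product at most $1$ to leading order — but that precision is not actually needed, since each factor is individually $1+O(\log^2 n/n)=1+o(1)$ for $k,j=O(\log n)$, which is already sufficient for the Chebyshev step.
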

\begin{proof}
  Clearly, $\mathbb{E}(T_{k_0}) \leq \sum_{k \geq k_0}
  \mathbb{E}(T_{k})$. By Stirling's formula,
\begin{eqnarray*}
  \sum_{k \geq k_0} \mathbb{E}(T_{k}) &= & (1+o(1)) (ce^{1-c})^{k_0}\frac{n}{\sqrt{2\pi}c}\sum_{i \geq 0} \frac{(ce^{1-c})^i}{(i+k_0)^{2.5}} \\
  & \leq &
  (1+o(1))(ce^{1-c})^{k_0}\frac{n}{\sqrt{2\pi}c k_0^{2.5}}\sum_{i \geq 0}(ce^{1-c})^i \\
  &= &\mathbb{E}(T_{k_0}) \frac{1}{1-(ce^{1-c})^i}.
\end{eqnarray*}
Writing $T_k$ as a sum of indicator variables over all $k$-tuples of
vertices, we see that when considering two disjoint trees of size at
most $O(\log n)$, at most $O(\log n)$ non-edges incident to each
vertex of the second tree are exposed when given the first. Hence, the
probability of having no edge adjacent to any of the vertices changes
only by a factor $(1+o(1))$, and thus $\mathbb{E}(( \sum_{k \geq k_0}
T_k)^2)=2\mathbb{E}(\sum_{\ell > k \geq k_0} T_k
T_{\ell})+\mathbb{E}(\sum_{k \geq k_0}T_k^2)=(1+o(1))\sum_{k \geq k_0}
\mathbb{E}(T_k)\sum_{\ell \geq k_0}\mathbb{E}(T_{\ell})=(1+o(1))(
\sum_{k \geq k_0} \mathbb{E}(T_{k}))^2.$ By Chebyshev's inequality,
the result follows. 
\end{proof}

We turn now to the proof of Lemma~\ref{lemma_important}.

\begin{proof}
  For a subgraph of size $u$, let $n_1$ its number of vertices of type
  $1$ and $n_2$ its number of vertices of type $2$ (we do not explicitly
  refer to the size nor to the subgraph, since it is clear from the
  context).  Let $X_u$ denote the number of subgraphs of size $u \leq
  \alpha n$ with more than $n_1+2n_2$ edges. Our goal is to show that
  for a randomly chosen graph $G \in \mathcal{G}(n,p)$ we have
  $\sum_{u \leq \alpha n} X_u=0$ a.a.s.  To simplify the notation, we
  set $r=1-q$. Let $\omega_n$ denote a function tending to infinity 
arbitrarily slowly, as $n \to \infty$.

We start with the relatively easy cases where $u$ is small enough, or $n_2$ 
large enough.\\

\emph{Small $u$: $u=o(\log n /\log \log n)$}\\
  First note that the expected number of subgraphs of size 
$u =o(\log n/\log \log n)$ with at least $n_1+2n_2+1$ edges is at most
\begin{align*}
  & \sum_{n_1=0}^{\log n/(\omega_n \log \log n)}\sum_{n_2=0}^{\log n/(\omega_n \log \log n)} \binom{rn(1+o(1))}{n_1}\binom{qn(1+o(1))}{n_2}\binom{\binom{n_1+n_2}{2}}{n_1+2n_2+1}p^{n_1+2n_2+1}  \\
  \leq &\sum_{n_1=0}^{\log n/(\omega_n \log \log n)}\sum_{n_2=0}^{\log n/(\omega_n \log \log n)}\left(\frac{rne}{n_1}\right)^{n_1}\left(\frac{qne}{n_2}\right)^{n_2}\left(\frac{ce(n_1+n_2)^2}{2n(n_1+2n_2+1)}\right)^{n_1+2n_2+1} \\
  \leq & \sum_{n_1=0}^{\log n/(\omega_n \log \log n)}\sum_{n_2=0}^{\log n/(\omega_n \log \log n)} \left(\frac{1}{n}\right)^{n_2+1}
  \left(O(n_1+n_2)\right)^{n_1+2n_2+1} \\
  \leq & \frac{\log^2 n}{n} (O(\log n))^{(2\log n/\omega_n \log \log n)}=o(1),
\end{align*}
and thus a.a.s. there is no such subgraph. \\

\emph{Large $n_2$: $n_2 > \xi n_1$}\\
 Also, a subgraph with more
than $n_1+2n_2+1$ edges and total size at most $\alpha n$ cannot
exist, if the density of nodes of type $2$ is too big: more precisely,
if $n_2 > \xi n_1$ for some constant $\xi > 0$, then by Proposition~4
of~\cite{Theran} applied with $a=\frac{1+2\xi}{1+\xi} > 1$, there
exists a constant $t(a,c)=(\frac{2a}{c})^{a/(a-1)}e^{-(a+1)/(a-1)}$,
such that a.a.s. $G(n,c/n)$ has no subgraph with $n_1+2n_2$ edges of
size at most $t(a,c)n$.\\

\emph{Remaining cases: $u=\Omega(\log n/\log \log n)$ and
$n_2 \leq n_1/K$ }\\ 
We may thus assume that we are dealing with the remaining cases of
$u=\Omega(\log n/\log \log n)$ and $n_2 \leq n_1/K$. We drop the
condition $u \leq \alpha n$ from now on.

Let $K'=K'(c-1/r)$ the constant coming from part~2 of
Lemma~\ref{logntrees} so that a.a.s. all trees are of size at most $K'
\log n$, and let $K=K(K')$ be a sufficiently large constant (in fact,
$K$ depends on $q$ and $c$, and it is the largest
constant appearing throughout this proof; in particular, we choose $K$
to be such that it is also larger than the product of $1/rc$ and the
constants $C$ of the statement of Lemma~\ref{lem:lagrange} and the
constant $C$ of the statement of Lemma~\ref{logntrees}).

We need one more observation: when counting all components $X_u$
with more than $n_1+2n_2$ edges, let us first choose the components
from the subgraph induced by the $n_1$ vertices. We may assume that
each of these components is taken entirely or not at all: indeed,
since $p =c/n$ with $c < 1/r$, and since the probability for an edge
to be present in the subgraph induced by all $rn(1+o(1))$ vertices of
type $1$ is $p=c/n=\frac{cr}{rn} < \frac{1}{rn}$, for the subgraph
induced by all $rn(1+o(1))$ vertices of type $1$, Lemma~\ref{nocycle}
applies (with $cr$ playing the role of $c$, and $rn$ playing the role
of $n$), and thus this subgraph a.a.s. contains only trees and
unicyclic components. If there were now a subgraph with $n_1$ vertices
of type $1$ and $n_2$ vertices of type $2$ and more than $n_1+2n_2$
edges, where some components are partially taken, we could complete
the tree components and unicyclic components. We add at least the same
number of edges as vertices, and the resulting subgraph would have
$n_1'$ vertices of type $1$, $n_2$ vertices of type $2$ (clearly still
satisfying $n_2 \leq n_1'/K$), and it would still have more than
$n_1'+2n_2$ edges. Thus we will from now on count only subgraphs where
all components of the subgraph induced by the vertices of type $1$ are
entirely or not at all taken.

Denote by $T_i$ the number of trees of size $i$ of this subgraph
induced by the vertices of type $1$. By Lemma~\ref{treesizes}, a.a.s.,
$T_1=rne^{-rc}(1+o(1)) \leq (rnrce^{1-rc})\frac{1}{rc}$, and using
Stirling's formula, for each $2 \leq i = O(\log n)$ satisfying
$\frac{rni^{i-2}}{i!}(rc)^{i-1}e^{-irc} \geq \omega_n$, a.a.s.,
$T_i=(1+o(1))\frac{rni^{i-2}}{i!}(rc)^{i-1}e^{-irc} \leq
rn(rce^{1-rc})^i \frac{1}{rc}$.  For the remaining number of trees of
size $i=\Theta(\log n)$, let $i_0:=\lfloor \nu \log n\rfloor$
throughout the proof be the smallest integer $i$ satisfying
$$\eta \omega_n \leq \mathbb{E}(T_{i})
= \frac{rni^{i-2}}{i!}(rc)^{i-1}e^{-irc}=
(1+o(1))\frac{rn(rce^{1-rc})^i}{\sqrt{2\pi}rc i^{2.5}} \leq \omega_n
$$ for some sufficiently small $\eta > 0$ (as remarked before, such $i_0$ exists for small enough $\eta > 0$). By the proof of Lemma~\ref{logntrees}, the number of trees of size at least $i_0$ is a.a.s. at most $(1+o(1))\frac{1}{1-cr e^{1-cr}} \mathbb{E}(T_{i_0}) \leq  (1+o(1))\frac{1}{1-cre^{1-cr}}rn(rce^{1-rc})^{i_0}$. Finally, once more by Lemma~\ref{treesizes}, the total number of trees of size $i =\omega(\log n)$ is $0$ a.a.s.

Consider now any graph on $n$ vertices, for which the subgraph induced
by $(1+o(1))rn$ vertices (that will then correspond to vertices of
type $1$) is deterministically given: it consists only of trees and
unicyclic components, the number of trees of size $i < i_0$ is at most
$rn(rce^{1-rc})^i \frac{1}{rc}$, the total number of trees of size
$i=\Theta(\log n)$ for $i \geq i_0$ is at most
$(1+o(1))\frac{1}{1-cre^{1-cr}} rn(rce^{1-rc})^{i_0} $, the number of
trees of size $\omega(\log n)$ is $0$, and the number of vertices in
unicyclic components is at most $\omega_n$. We will below show that
starting with any such graph, when exposing the random edges between
the $qn$ vertices of type $2$ and edges between type $1$ and type $2$
(as before, each such edge being present with probability $p$), with
probability $1+o(1)$ in the whole graph there are no subgraphs $X_u$,
$u \leq \alpha n$ with more than $n_1+2n_2$ edges.  The lemma will
then follow, since the randomly chosen graph $G \in \mathcal{G}(n,p)$,
as shown above, a.a.s. satisfies these properties.

\par
It remains now to show that any graph on $(1+o(1))rn$ vertices (of
type $1$) with the above mentioned properties has a.a.s. no subgraphs
with more than $n_1+2n_2$ edges. By definition, we can bound the
number of such components of each size in the subgraph induced by the
vertices of type $1$. We then have to combine them with the choices
for the $n_2$ vertices. If there are $t$ tree components in the
subgraph induced by the vertices of type $1$, the number of additional
edges needed to surpass $n_1+2n_2$ is $2n_2+t+1$. Call tree components
of size $i < i_0$ to be \emph{small}, and call trees of size $i_0 \leq
i=\Theta(\log n)$ to be \emph{medium}. For any subgraph with
$n_1=\Omega(\log n/\log \log n)$ vertices of type $1$, write
$n_1=n_t+z$, where $n_t$ is the number of vertices belonging to tree
components and $z$ is the number of vertices belonging to unicyclic
components. Note that $z \leq \omega_n$ and that by our assumption on
$n_1$, $z=o(n_1)$. Next, for small trees of size $i < i_0$, let $s_i$
be the number of tree components of size $i$ in the subgraph of the
$n_1$ vertices, and denote by $s_{i_0}$ the number of medium tree
components (of size at least $i_0$) in the subgraph of the $n_1$
vertices. Let $\sum_{i=1}^{i_0} i s_i=s$, and let $\sum_{i=1}^{i_0}
s_i = \alpha s$ for $\alpha \leq 1$. Note that $s$ is therefore a
lower bound on the number of vertices in trees in the subgraph of the
$n_1$ vertices; also, note that all medium sized trees are of size
$\Theta(\log n)$, and therefore $s \geq n_1/K'$ and still $z=o(s)$.)
Define now $X_u^{s,\alpha
  s,n_1,n_2}$ the number of subgraphs $X_u$ with $n_1$ vertices of
type $1$, $n_2$ vertices of type $2$, satisfying $\sum_{i=1}^{i_0} i
s_i=s$, and $\sum_{i=1}^{i_0} s_i = \alpha s$, furthermore having at
most $\omega_n$ vertices in unicyclic components and having in total
more than $n_1+2n_2$ edges (the previously imposed restrictions $s
\geq n_1/K'$, $n_2 \leq n_1/K$ and $n_1=\Omega(\log n/\log \log n)$
still hold). We have for some large constants $C,C',C'' > 0$ (whose
values might change from line to line)
\begin{align*}
  \mathbb{E}(X_u^{s,\alpha s,n_1,n_2}) \leq &   \sum_{\sum s_i =\alpha s, \sum_i i s_i =s}  \left( 2^{\omega_n} \prod_{i} \binom{rn(rce^{1-rc})^i C}{s_i}\binom{qn(1+o(1))}{n_2}\binom{n_1n_2+\binom{n_2}{2}}{2n_2+\sum s_i} p^{2n_2+\sum s_i+1} \right) \\
  \leq & 2^{\omega_n}(C'n)^{\alpha s}(rce^{1-rc})^s
  \left(\frac{qen(1+o(1))}{n_2}\right)^{n_2} \left(\frac{\left( C''
        sn_2+\frac12n_2^2\right)ec}{(2n_2+\alpha
      s)n}\right)^{2n_2+\alpha s} p \prod_i \frac{1}{s_i^{s_i}}.
\end{align*}
By Lemma~\ref{lem:lagrange}, $\prod_i \frac{1}{s_i^{s_i}} \leq
\left(\frac{C}{\alpha^2 s}\right)^{\alpha s}$ for some $C > 0$, and
writing $n_2=\beta \alpha s$ (note that since $n_2 \leq n_1/K$ and $s \geq n_1/K'$, we
still have $\beta \alpha \leq K'/K$, which is still sufficiently small
for large enough $K=K(K')$), we have (again for large enough $C,C' >
0$)
\begin{align*}
2^{-\omega_n}  \mathbb{E}(X_u^{s,\alpha s,n_1,n_2}) \leq & \left( \frac{C'^{\alpha}rce^{1-rc} (\beta \alpha s^2)^{\alpha}}{(\alpha^2 s)^{\alpha}(2\beta\alpha s+\alpha s)^{\alpha})} \right)^s \left( \frac{C\left( \beta \alpha s^2 \right)^2}{\beta \alpha s (2\beta\alpha s+\alpha s)^2 n} \right)^{\beta \alpha s} \\
  \leq & \left(
    \frac{C'^{\alpha}rce^{1-rc}\beta^{\alpha}}{\alpha^{2\alpha}
      (1+2\beta)^{\alpha}} \frac{(C\beta s)^{\beta
        \alpha}}{(1+2\beta)^{2\beta\alpha} (\alpha n)^{\beta \alpha}}
  \right)^s.
\end{align*}
We distinguish now three cases. If $\beta \geq K'/K^{3/4}$, then
$\alpha \leq 1/K^{1/4}$. Then, for some $C'' > 0$, we have
\begin{align*}
 2^{-\omega_n} \mathbb{E}(X_u^{s,\alpha s,n_1,n_2}) \leq &
  \left(\frac{(rce^{1-rc})C''^{\alpha} (Cs)^{\beta
        \alpha}}{\alpha^{2\alpha} \beta^{\beta \alpha} (\alpha
      n)^{\beta \alpha}}\right)^s.
\end{align*}
The base of the last expression is clearly monotone increasing in $s$,
and so we may plug in our upper bound on $s \leq u \leq
n$. Considering the base only and taking logarithms, we obtain
$$
\log(rce^{1-rc}) -2\alpha\log \alpha - \beta \alpha \log (\beta\alpha)
+\alpha \log C''+\beta \alpha \log C.
$$
Note that $x \log x \to 0$ as $x \to 0$, and thus, if $K$ and thus
also $K^{1/4}$ is sufficiently large, both $\alpha$ and $\beta \alpha$
are sufficiently small. Then the first term dominates in absolute
value, and since $rce^{1-rc} < 1$, the expression is negative. 

 If $\beta < K'/K^{3/4}$ and $\alpha \leq 1/K^{1/4}$, then for some $C'' >
0$
\begin{align*}
2^{-\omega_n}  \mathbb{E}(X_u^{s,\alpha s,n_1,n_2}) \leq &
  \left(\frac{(rce^{1-rc})C''^{\alpha} s^{\beta \alpha}\beta^{\beta
        \alpha+\alpha}}{\alpha^{2\alpha+\beta \alpha} n^{\beta
        \alpha}}\right)^s.
\end{align*}
Reasoning as before, we obtain
$$
\log(rce^{1-rc}) + (\beta \alpha + \alpha)\log \beta - (2\alpha+\beta
\alpha) \log \alpha + \alpha \log C''.
$$
The second term is negative, and among the others, for $K$ large
enough, the first term dominates them in absolute value, and hence the
expression is negative.

Finally, if $\alpha > 1/K^{1/4}$ and hence $\beta < K'/K^{3/4}$, for
some $C'' > 0$ we have
\begin{align*}
 2^{-\omega_n} \mathbb{E}(X_u^{s,\alpha s, n_1, n_2}) \leq &
  \left(\frac{C''^{\alpha}rce^{1-rc} s^{\beta \alpha}\beta^{\beta
        \alpha+\alpha}}{\alpha^{2\alpha+\beta \alpha} n^{\beta
        \alpha}}\right)^s.
\end{align*}
As before, we obtain
\begin{align*}
  \log(rce^{1-rc}) + (\beta \alpha + \alpha)\log \beta - (2\alpha+\beta \alpha) \log \alpha+\alpha \log C'' \\
  = \log(rce^{1-rc}) + \beta \alpha \log (\beta/\alpha) + \alpha \log
  (C''\beta/\alpha^2).
\end{align*}
Once more for $K$ large enough, $\beta < \alpha$, and the second term
is negative. Also, for $K$ large enough, $C'' < K^{1/4} / K'$, and
thus $C''\beta < 1/K^{1/2}$, and therefore $C''\beta < \alpha^2$, and
the last expression is negative as well.
In all cases, since we have $s=\Theta(u)$,
$$2^{-\omega_n} \mathbb{E}(X_u^{s,\alpha s, n_1, n_2}) \leq \rho^s \leq \rho^{Cu},
$$
for some $0 < \rho < 1$ and some absolute constant $C > 0$. Clearly, 
$$\mathbb{E}(X_u)=\sum_{s,\alpha s,n_1,n_2}\mathbb{E}(X_u^{s,\alpha s, n_1, n_2}) \leq 
2^{\omega_n} u^4 \rho^{Cu}.$$ 
Finally, $$ \sum_{\log \log n \leq u \leq \epsilon n} \mathbb{E}(X_u)
\leq 2^{\omega_n} \sum_{\log \log n \leq u \leq \epsilon n} u^4 \rho^{Cu}=o(1).$$
By Markov's inequality the lemma follows.
\end{proof}

\section*{Acknowledgements} We thank Louis Theran for helpful comments regarding Remark~\ref{rem:Theran}.


\end{document}